\theoremstyle{plain}
\newtheorem*{theoa}{Theorem 1}
\newtheorem*{theob}{Theorem 2}
\newtheorem*{theoc}{Theorem 3}
\newtheorem*{theod}{Theorem 4}
\newtheorem{theo}{Theorem}[section]
\newtheorem{lemma}[theo]{Lemma}
\newtheorem{propo}[theo]{Proposition}
\newtheorem{coro}[theo]{Corollary}
\theoremstyle{definition}
\newtheorem{defi}[theo]{Definition}
\theoremstyle{remark}
\newtheorem{rem}[theo]{Remark}
\newcommand\Map{\operatorname{Map}}
\newcommand\id{\operatorname{id}}
\newcommand\Aut{\operatorname{Aut}}
\newcommand\hcN{\mathit{hcN}}
\newcommand\sets{\mathcal{S}\mathrm{ets}}
\newcommand\ssets{\mathrm{s}\mathcal{S}\mathrm{ets}}
\newcommand\dsets{\mathrm{d}\mathcal{S}\mathrm{ets}}
\newcommand\sOper{\mathrm{s}\mathcal{O}\mathrm{per}}
\newcommand\sCat{\mathrm{s}\mathcal{C}\mathrm{at}}
\newcommand\Cat{\mathcal{C}\mathrm{at}}
\newcommand\Oper{\mathcal{O}\mathrm{per}}
\newcommand\Alg{\mathcal{A}\mathrm{lg}}
\newcommand\Idem{\mathrm{Idem}}
\newcommand\Split{\mathrm{Split}}
\newcommand\Morita{\mathrm{Mor}}
\newcommand\Iso{\mathrm{Iso}}
\newcommand\Fun{\mathrm{Fun}}
\newcommand{\myrightleftarrows}[1]{\mathrel{\substack{\xrightarrow{#1} \\[-.8ex] \xleftarrow{#1}}}}
\newcommand{\morp}[3]{#1 \colon #2 \to #3}
\newcommand{\trf}[1]{{#1}^u}
\newcommand{\Ret}{\mathrm{Ret}}
\newcommand{\St}{\mathfrak{C}}
\newcommand{\Ug}{U}
\newcommand{\cell}[1]{{#1}\text{-}\mathrm{cell}}
\newcommand{\inj}[1]{{#1}\text{-}\mathrm{inj}}
\newcommand{\cat}{\mathcal}
\newcommand{\str}{\mathbf}
\newcommand{\op}{^{\mathrm{op}}}
\newcommand{\N}{\mathbb{N}}
\newcommand{\Fin}{\mathrm{Fin}}
\newcommand{\Sym}{\mathbf{\Sigma}}
\newcommand{\Sign}[1]{{\mathrm{Sign}(#1)}}
\newcommand{\ope}{\mathcal}
\newcommand{\Mcc}{\cat{M}}
\newcommand{\CM}{\mathrm{CM}}
\numberwithin{equation}{section}
\begin{document}
\title[Morita homotopy theory for $(\infty,1)$-categories and $\infty$-operads]{Morita homotopy theory for\\ $(\infty,1)$-categories and $\infty$-operads}

\author[G. Caviglia]{Giovanni Caviglia}
\address{Radboud Universiteit Nijmegen, Institute for
Mathematics, Astrophysics, and Particle Physics, Heyendaalseweg 135, 6525 AJ
Nijmegen, The Netherlands}
\email{giova.caviglia@gmail.com}

\author[J.\,J. Guti\'errez]{Javier J.~Guti\'errez}
\address{Departament de Matem\`atiques i Inform\`atica,
Facultat de Matem\`atiques i Infor\-m\`a\-tica,
Universitat de Barcelona,
Gran Via de les Corts Catalanes 585,
08007 Barcelona, Spain}
\email{javier.gutierrez@ub.edu}
\urladdr{http://www.ub.edu/topologia/gutierrez}

\begin{abstract}
We prove the existence of Morita model structures on the categories of small simplicial categories, simplicial sets, simplicial operads and dendroidal sets, modelling the Morita homotopy theory of $(\infty,1)$-categories and $\infty$-operads. We give a characterization of the weak equivalences in terms of simplicial presheaves, simplicial algebras and slice categories. In the case of the Morita model structure for simplicial categories and simplicial operads, we also show that each of these model structures can be obtained as an explicit left Bousfield localization of the Bergner model structure on simplicial categories and the Cisinski--Moerdijk model structure on simplicial operads, respectively.
\end{abstract}

\maketitle
\section*{Introduction}
Morita theory describes an equivalence relation between objects with the same type of algebraic structure in terms of their representations. Classically, it was defined for associative and unitary rings. Two rings are Morita equivalent if their corresponding categories of (left) modules are equivalent. This relation can be generalized by replacing rings by small categories enriched in abelian groups~\cite{New72} (rings are precisely one object categories of such kind).

In the non-additive setting, that is, for small categories, module categories are replaced by presheaf categories. Thus, two small categories $\mathcal{C}$ and $\mathcal{D}$ are Morita equivalent if the associated presheaf categories $\widehat{\mathcal{C}}$ and $\widehat{\mathcal{D}}$ are equivalent. It is a well-known result in category theory that, for a functor $ f\colon\mathcal{C}\to\mathcal{D}$ between small categories, the induced adjunction between the presheaf categories
$$
f_!:\widehat{\mathcal{C}}\myrightleftarrows{\rule{0.4cm}{0cm}}\widehat{\mathcal{D}}: f^*
$$
is an equivalence of categories if and only if $f$ is fully faithful and essentially surjective up to retracts; see for instance~\cite{EZ76}, \cite{BD86}. In this case, the functor $f$ is called a \emph{Morita equivalence}. It is clear that every equivalence of categories is a Morita equivalence, but the converse is not true in general. The difference is somehow measured by Cauchy completion. Every Morita equivalence between categories in which every idempotent splits is an equivalence of categories.

This notion of Morita equivalence can be also extended to the enriched case, where the category of sets is replaced by a monoidal category $\mathcal{V}$, small categories are enriched over $\mathcal{V}$ and presheaf categories are replaced by categories of $\mathcal{V}$-enriched functors~\cite{Lin74}.

From the homotopical point of view, it is a natural question to ask if Morita equivalences correspond to the weak equivalences of certain model structures, in the sense of Quillen, for the category of algebraic objects under consideration. The answer to this question has been studied in the literature in several contexts. Dell'Ambrogio and Tabuada proved in \cite[Theorem 1.1]{dAT1} that  if $R$ is a commutative ring, then there exists a model structure on the category of small $R$-categories whose weak equivalences are the Morita equivalences. In another paper, they studied Morita homotopy theory of $C^*$-categories extending the classical notion of Morita equivalence and proved the existence of a Morita model structure on the category of small unital $C^*$-categories~\cite[Theorem 4.9]{dAT2}. 

Another relevant example is the case of DG-categories, that is, categories enriched over chain complexes. Motivated by the study of homological invariants of DG-categories, Tabuada proved in \cite[Th\' eor\`eme 5.3]{Tab05} that the category of small DG-categories admits a model structure whose weak equivalences are the DG\nobreakdash-functors that induce an equivalence  between the corresponding derived categories, or equivalently, the functors that are locally quasi-isomorphisms and essentially surjective after taking the idempotent completion of the pretriangulated closure.

The aim of this paper is to develop a Morita homotopy theory for $(\infty,1)$\nobreakdash-cat\-egories and $\infty$-operads. There are different approaches to the theory of higher categories and higher operads in terms of model structures. In this paper, we will use the Bergner model structure on simplicial categories~\cite{Be07} and the Joyal model structure on simplicial sets~\cite{Joy08}, \cite{Lu09} to model $(\infty,1)$-categories,  and the  Cisinski--Moerdijk model structure on simplicial operads~\cite{CM13} and the operadic model structure on dendroidal sets~\cite{CM11} to model $\infty$-operads.

The main results of the paper are the existence of Morita model structures on the above categories, modelling the Morita homotopy theory for $(\infty,1)$-categories and $\infty$-operads, as we now summarize with more detail.

We call a map of simplicial categories a \emph{Morita weak equivalence} if it is homotopically fully faithful and homotopically essentially surjective up to retracts.

\begin{theoa}
There is a left proper cofibrantly generated model structure on small simplicial categories $\sCat_{\Morita}$ whose weak equivalences are the Morita weak equivalences. Moreover:
\begin{itemize}
\item[{\rm (i)}] A map of simplicial categories $f\colon \mathcal{C}\to \mathcal{D}$ is a Morita weak equivalence if and only if
the Quillen pair
$$
f_!:\ssets^{\mathcal{C}}\myrightleftarrows{\rule{0.4cm}{0cm}} \ssets^{\mathcal{D}}:f^*
$$
is a Quillen equivalence, where the categories of simplicial presheaves are equipped with the projective model structures.
\item[{\rm (ii)}] The model structure $\sCat_{\Morita}$ is a left Bousfield localization of the Bergner model structure on simplicial categories.
\end{itemize}
\end{theoa}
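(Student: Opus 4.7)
The plan is to construct $\sCat_{\Morita}$ as a left Bousfield localization of the Bergner model structure. Since the latter is combinatorial and left proper, general machinery (Hirschhorn, Barwick) guarantees that the localization at any small set of maps produces a left proper cofibrantly generated model structure, so clause (ii) will be built in from the outset. The set $S$ at which we localize consists of a cofibrant replacement in the Bergner model structure of the inclusion $\Idem \hookrightarrow \Split$ of the walking idempotent into the walking splitting of an idempotent. The fibrant $S$-local objects will then be precisely the simplicial categories that are homotopically idempotent-complete, that is, in which every homotopy-coherent idempotent admits a splitting.

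The next step is to identify the $S$-local equivalences with the Morita weak equivalences. One first shows that the $S$-local objects among fibrant simplicial categories are exactly those $\calC$ for which restriction along $\Idem \hookrightarrow \Split$ induces a weak equivalence $\Map(\Split,\calC)\to\Map(\Idem,\calC)$, which by a standard argument characterizes homotopy idempotent completeness. Given $f\colon\calC\to\calD$, one factors $f$ through a Bergner weak equivalence into a homotopy idempotent-complete target; the Morita condition on $f$ then translates directly into $f$ becoming a Bergner equivalence after inverting $S$, by detecting it on mapping spaces (full faithfulness) and on the set of retracts in the homotopy category (essential surjectivity up to retracts).

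For (i), let $(f_!,f^*)$ denote the induced Quillen pair on projective simplicial presheaf categories. The representables form a set of homotopy generators for $\ssets^{\calC}$, and the derived unit on the representable $y_c=\calC(-,c)$ is the map $\calC(-,c)\to f^*f_!\calC(-,c)\simeq \calD(f-,fc)$, which is a weak equivalence for all $c$ precisely when $f$ is homotopically fully faithful. Similarly, the derived counit $f_!f^*y_d\to y_d$ being an equivalence for all representables $y_d=\calD(-,d)$ corresponds to each object of $\calD$ being a retract, in the homotopy category, of some $f(c)$. A standard homotopy-generator argument then shows that $(f_!,f^*)$ is a Quillen equivalence if and only if both conditions hold, i.e., if and only if $f$ is a Morita weak equivalence.

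The main technical obstacle will be the first step: constructing an explicit cofibrant model of the inclusion $\Idem\hookrightarrow\Split$ in the Bergner model structure and verifying that localizing at it forces precisely the splitting of homotopy-coherent idempotents. The subtlety is that a homotopy idempotent in a general $(\infty,1)$-category need not be strictifiable to a discrete idempotent, so one must carefully keep track of the coherence data; once this is in place, the description of local objects and local equivalences, and hence both clauses of the theorem, follow from the general framework of left Bousfield localization and the Yoneda-type analysis sketched above.
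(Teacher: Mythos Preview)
Your overall strategy is sound and genuinely different from the paper's. You build $\sCat_{\Morita}$ as the left Bousfield localization of the Bergner model structure at a cofibrant replacement of $\iota\colon\Idem\to\Split$, and then identify the local equivalences with the Morita weak equivalences; part~(ii) and left properness thus come for free. The paper proceeds in the opposite order: it first constructs the model structure directly via Kan's recognition principle, introducing for this purpose the machinery of \emph{retract intervals} (cofibrant objects in the Reedy arrow category weakly equivalent to $\iota$) and a generating set thereof, and uses these as generating trivial cofibrations together with the local ones. Only afterwards (Proposition~\ref{prop:iota_fibrant} and Corollary~\ref{cor:morita_scat_loc}) does the paper verify that the fibrant objects coincide with the $\iota$-local ones, whence the localization description and left properness. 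The paper's route has the advantage of yielding an explicit characterization of the \emph{fibrations} (local fibrations with the retract-lifting property), not just of the fibrant objects; your route is more economical but leaves the fibrations implicit.

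For part~(i), your argument via derived unit and counit on representables is essentially a direct proof of the Dwyer--Kan theorem (Theorem~\ref{thm:DK}); the paper instead cites that theorem and reduces to it via Lemma~\ref{lem:Morita_r-equiv}, which shows that ``homotopically essentially surjective up to retracts'' in the sense of Definition~\ref{def:ret_lift}(ii) agrees with the condition on $\pi_0$. That lemma in turn rests on the equivalence of the three notions of homotopy retract (Propositions~\ref{prop:strongret_equal_virtual} and~\ref{prop:virtual_equal_homret}), which is exactly the coherence issue you flag in your last paragraph. In your approach the same work reappears, hidden in the step where you must show that the $S$-local fibrant replacement $\calC\to\calC'$ is itself a Morita weak equivalence (not merely an $S$-local equivalence, which would be circular): you need an explicit homotopy idempotent completion with a fully faithful inclusion whose image generates under retracts, and this is where the substance lies. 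Either the retract-interval machinery of the paper or Lurie's idempotent completion of quasi-categories transported along the Joyal--Bergner equivalence will do the job, but it is worth being aware that this step carries the real content.
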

Our proof for the existence of the Morita model structure (Theorem~\ref{theo:moritamodelcat}) uses Kan's recognition principle for cofibrantly generated  model categories together with the existence of a generating set of \emph{retract intervals}, following the theory of enriched intervals described in~\cite{BM13}. Part (i) is Lemma~\ref{lem:Morita_r-equiv}, where we show that our definition of Morita weak equivalence coincides with the notion of weak $r$-equivalence, as introduced by Dwyer--Kan in~\cite{DK87}. Part (ii) is Corollary~\ref{cor:morita_scat_loc}.

We choose this approach to prove the existence of the model structure because it follows a similar strategy as in the case of the canonical model structure for simplicial categories, and also because it provides, a posteriori, more insight about the model structure itself. For instance, we get an explicit description of the generating cofibrations and trivial cofibrations (this is not the case if we construct the model structure directly as a left Bousfield localization).

We define the Morita model structure for quasicategories $\ssets_{\Morita}$ as the left Bousfield localization of the Joyal model structure on simplicial sets with respect to the morphism $N(\iota)\colon N(\Idem)\to N(\Split)$, where $N$ denotes the nerve functor and $\iota$ is the fully faithful functor characterizing the functors that lift split idempotents via right lifting property. Our main result in this setting is the following, which is proved in Theorem~\ref{thm:Quillen_eq_ssets_morita} and Corollary~\ref{cor:char_moritaeq_ssets}:

\begin{theob}The following hold for the Morita model structure on quasicategories:
\begin{itemize}
\item[{\rm (i)}] A map $f\colon X\to Y$ is a Morita weak equivalence of simplicial sets if and only if the adjunction
$$
f_!:\ssets/X\myrightleftarrows{\rule{0.4cm}{0cm}} \ssets/Y:f^*
$$
is a Quillen equivalence between the slice categories with the covariant model structures.
\item[{\rm (ii)}] There is a Quillen equivalence
$$
\St:\ssets_{\Morita}\myrightleftarrows{\rule{0.4cm}{0cm}} \sCat_{\Morita}: \hcN,
$$
where $\hcN$ is the homotopy coherent nerve and $\St$ is its left adjoint.
\end{itemize}
\end{theob}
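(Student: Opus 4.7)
For part (ii), the plan is to apply the standard transfer principle for left Bousfield localizations: a Quillen equivalence between left proper combinatorial model categories descends to any left Bousfield localization on one side if the localizing set is replaced by its derived image on the other. Starting from the Lurie Quillen equivalence $\St \dashv \hcN$ between the Joyal model structure on $\ssets$ and the Bergner model structure on $\sCat$, and taking $S=\{N(\iota)\}$ on the $\ssets$ side, one obtains a Quillen equivalence between $\ssets_{\Morita}$ and the left Bousfield localization of Bergner at $\mathbb{L}\St(N(\iota))$. To conclude, this latter localization must be identified with $\sCat_{\Morita}$. Since both are left Bousfield localizations of the Bergner structure (the latter by Theorem~1(ii)), it suffices to match localizing sets up to saturation. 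The key observation is that $\Split$ and $\Idem$ are ordinary categories, so the derived counit exhibits $\St N(\mathcal{C})$ as Bergner-equivalent to $\mathcal{C}$ for $\mathcal{C} \in \{\Split,\Idem\}$; consequently $\mathbb{L}\St(N(\iota))$ is Bergner-equivalent to the map $\iota$ viewed as a map in $\sCat$, which is precisely one of the generating retract-interval morphisms used to construct $\sCat_{\Morita}$ in Theorem~1.

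For part (i), the plan is to combine (ii) with Lurie's straightening/unstraightening, which supplies, for each simplicial set $X$, a Quillen equivalence between the covariant model structure on $\ssets/X$ and the projective model structure on simplicial presheaves over $\St(X)$, naturally in $X$. Under these equivalences, the Quillen pair $f_! \dashv f^*$ on covariant slices corresponds to the Quillen pair $\St(f)_! \dashv \St(f)^*$ on simplicial presheaves. By Theorem~1(i), the latter is a Quillen equivalence if and only if $\St(f)$ is a Morita weak equivalence of simplicial categories, and by part (ii) above this is equivalent to $f$ being a Morita weak equivalence of simplicial sets.

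\textbf{Main obstacle.} The principal technical difficulty lies in the identification of $\mathbb{L}\St(N(\iota))$ with $\iota$ in the Bergner model structure up to the saturated class generated by the retract intervals of Theorem~1: this requires controlling cofibrant replacements of $N(\Split)$ and $N(\Idem)$ and pushing them through $\St$, and verifying that the local equivalences so obtained agree on the nose with those of $\sCat_{\Morita}$. The naturality in $X$ of straightening/unstraightening needed for part (i) must also be articulated carefully, since the adjunction depends on $X$ and is only pseudo-natural a priori; one typically passes via the Quillen equivalence between the covariant slice and simplicial presheaves over a fibrant replacement of $\St(X)$, and checks that the comparison square for $f\colon X \to Y$ commutes up to natural weak equivalence in both model structures.
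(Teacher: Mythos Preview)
Your proposal is correct and follows essentially the same route as the paper: for (ii) you invoke the transfer of Bousfield localizations along a Quillen equivalence (this is Hirschhorn~3.3.20(1)(b), exactly what the paper cites) together with the identification $\St N(\iota)\simeq \iota$ in the Bergner structure; for (i) you use the straightening/unstraightening square to reduce to Theorem~1(i), which is again what the paper does.

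The only comment worth making is that both of your ``main obstacles'' are phantom. First, every object of $\ssets_{\Joyal}$ is cofibrant, so $N(\Idem)$ and $N(\Split)$ need no cofibrant replacement and $\mathbb{L}\St(N(\iota))=\St(N(\iota))$ on the nose; the weak equivalence $\St N(\iota)\simeq \iota$ then follows directly from the fact that the counit $\St N(\mathcal{C})\to\mathcal{C}$ is a Dwyer--Kan equivalence for any ordinary category $\mathcal{C}$ (this is the paper's Remark~2.7, citing Joyal and Lurie). Second, Lurie's straightening Quillen equivalence holds for \emph{every} simplicial set $X$, not just fibrant ones, and the comparison square for $f\colon X\to Y$ commutes up to natural isomorphism as stated---no passage through fibrant replacements of $\St(X)$ is required. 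With these simplifications your argument is exactly the paper's.
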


For simplicial operads, we define Morita weak equivalences as a generalization of those of simplicial categories, that is, they are the homotopically fully faithful maps whose underlying functor of categories is essentially surjective up to retracts. To the authors knowledge this notion of Morita equivalence for operads has not been considered previously in the literature. We prove the following result in Proposition~\ref{pro:morita_soper_loc}, Theorem~\ref{thm:Morita_simp_oper} and Theorem~\ref{theo:main morita oper}:
\begin{theoc}
There is a cofibrantly generated model structure on simplicial operads $\sOper_{\Morita}$ whose weak equivalences are
the Morita weak equivalences of simplicial operads. Moreover:
\begin{itemize}
\item[{\rm(i)}] A morphism $f\colon\mathcal{O}\to \mathcal{P}$ between $\Sigma$-cofibrant simplicial operads is a Morita weak equivalence if and only if the Quillen pair
$$
f_!:\Alg(\ope{O})\myrightleftarrows{\rule{0.4cm}{0cm}} \Alg(\ope{P}): f^*
$$
is a Quillen equivalence, where the categories of algebras are equipped with the (transferred) projective model structure.
\item[{\rm (ii)}] The model structure $\sOper_{\Morita}$ is a left Bousfield localization of the Ci\-sins\-ki--Moerdijk model structure on simplicial operads.
\end{itemize}
\end{theoc}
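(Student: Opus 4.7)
The plan is to follow the blueprint of Theorem~1, transporting each step from simplicial categories across the free--forgetful adjunction between $\sCat$ and $\sOper$, with the Cisinski--Moerdijk model structure playing the role of the Bergner model structure and categories of algebras playing the role of categories of simplicial presheaves. The argument splits naturally into the three pieces that match the three labelled results cited: existence of the model structure via Kan's recognition principle, the algebraic characterization of Morita weak equivalences, and identification with an explicit Bousfield localization.

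For existence, I take as generating cofibrations the generating cofibrations of the Cisinski--Moerdijk model structure on $\sOper$, and augment its generating trivial cofibrations by an operadic version of the ``retract interval'' family used in the $\sCat$ case, obtained essentially by applying the free-operad functor to the inclusion $\Split \hookrightarrow \Idem$. These extra maps live entirely in the $1$-ary part of an operad, so they exactly encode splittings of idempotent unary operations. Kan's recognition theorem then reduces the existence argument to verifying that Morita weak equivalences of simplicial operads are closed under $2$-out-of-$3$ and retracts (immediate from the definition), that the required smallness conditions hold (automatic in this combinatorial setting), and that every relative cell complex built from the augmented set is a Morita weak equivalence. This last check is the technical core: it requires showing that pushouts and transfinite compositions of the retract-interval inclusions remain homotopically fully faithful on all multi-arities and essentially surjective up to retracts on the underlying simplicial category, and the argument is parallel to the simplicial-category computation because the new generators only alter the $1$-ary data.

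Part~(i) rests on the transferred projective model structure on $\Alg(\ope{O})$, which is available for $\Sigma$-cofibrant $\ope{O}$. For $\Sigma$-cofibrant $\ope{O}$ and $\ope{P}$, the Quillen pair $(f_!,f^*)$ is a Quillen equivalence if and only if the derived unit is a weak equivalence on every representable $\ope{O}$-algebra and the derived counit is a weak equivalence on every retract of a representable $\ope{P}$-algebra. Unpacking the first condition on representables yields exactly that $f$ is homotopically fully faithful, and unpacking the second yields essential surjectivity up to retracts on the underlying simplicial category, using that retracts of representables precisely detect split idempotents in the algebraic setting. The converse implication follows by running the same identifications backwards.

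For part~(ii), the Cisinski--Moerdijk model structure on $\sOper$ is combinatorial and sufficiently well behaved that a left Bousfield localization at a small set of maps exists. Choosing as localizing set the image under the free-operad functor of the map $N(\iota)\colon N(\Split)\to N(\Idem)$ used in the $\sCat$ case, the local objects are those simplicial operads whose underlying simplicial categories have homotopy-coherently split idempotents, and a map is a local equivalence precisely when it is a Morita weak equivalence, by the same computation as in the simplicial-category setting. The main obstacle I anticipate is part~(i): one must handle the $\Sigma$-cofibrancy hypotheses with some care so that the transferred projective structure on algebras is genuinely available, and then translate the derived unit/counit conditions into statements about representables in a way that recovers fully faithfulness and essential surjectivity up to retracts. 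Everything else reduces, step by step, to an adaptation of the simplicial-category proofs already carried out earlier in the paper.
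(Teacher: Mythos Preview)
Your outline for the existence of the model structure is essentially the paper's: Kan's recognition principle with the Cisinski--Moerdijk generating cofibrations, and generating trivial cofibrations given by the local ones together with $j_!$ applied to a generating set of retract intervals. (Two small slips: the map is $\iota\colon\Idem\to\Split$, not the other way round, and one genuinely needs a \emph{generating set} of retract intervals rather than $\iota$ alone, since $\Idem$ and $\Split$ are not cofibrant in the relevant sense.)

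There is, however, a real gap in your treatment of part~(ii). You assert that the Cisinski--Moerdijk model structure is ``sufficiently well behaved'' for the left Bousfield localization at $j_!N(\iota)$ to exist by general results. It is not: $\sOper_{\CM}$ is \emph{not} left proper, so neither Hirschhorn's nor Barwick's existence theorem applies. The paper proceeds in the opposite logical order: first it constructs $\sOper_{\Morita}$ directly via Kan's recognition principle, and only afterwards identifies it with the would-be localization $L_{j_!(\iota)}\sOper_{\CM}$ by comparing fibrant objects. Your argument as written is circular on this point.

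For part~(i) your route is genuinely different from the paper's and, as stated, underspecified. The paper does \emph{not} argue directly with derived units and counits on free $\ope{O}$-algebras. Instead it passes through multi-sorted simplicial algebraic theories: it constructs a functor $\oatf\colon\sOper\to\sAt$ with $\Alg(\ope{O})\simeq\sAlg{\oat{\ope{O}}}$, proves (using $\Sigma$-cofibrancy to control the symmetric-group quotients) that $f$ is a Morita weak equivalence of operads iff $\oat{f}$ is one of algebraic theories, and then invokes a separate characterization of Morita weak equivalences of algebraic theories via the Badzioch--Bergner comparison between strict algebras and weak models. The reason for this detour is precisely the difficulty you gloss over: free $\ope{O}$-algebras involve coproducts of quotients $\ope{O}(\str{b};d)/\Sigma_f$, so ``unpacking the derived unit on representables'' does not immediately yield the maps $\ope{O}(\str{c};c)\to\ope{P}(f\str{c};fc)$ on the nose, and reducing the Quillen-equivalence condition to representables requires knowing that homotopy left Kan extension preserves weak models---a nontrivial bar-construction computation that occupies an entire section. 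Your sketch does not indicate how either of these steps would be handled.
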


Since every cofibrant simplicial operad in the Cisinski--Moerdijk model structure is $\Sigma$\nobreakdash-cofibrant and the cofibrant resolution of every operad provides a model for the corresponding notion of homotopy invariant algebraic structure, the Morita model structure on simplicial operads provides a model for a homotopy theory of homotopy invariant algebraic structures.

The characterization of the Morita weak equivalences in terms of categories of algebras given in part (i) is more involved than in the case of simplicial categories, since we have to deal with multi-linear algebraic structures. In order to handle this problem, we make use of multi-sorted simplicial algebraic theories and Morita equivalences in that context, and its relationship with simplicial operads, which we studied in~\cite{CG18}.

In the case of dendroidal sets, we define the Morita model structure $\dsets_{\Morita}$ as the left Bousfield localization of the operadic model structure with respect to the morphism $N_dj_!(\iota)$, where $N_d$ is the dendroidal nerve functor and $j_!$ is the left adjoint of the functor that sends an operad to its underlying category. Our main result in this setting is the following, which is proved in Theorem~\ref{thm:Quillen_eq_dsets_morita} and Corollary~\ref{cor:char_moritaweq_dsets}:

\begin{theod}
The following hold for the Morita model structure on dendroidal sets:
\begin{itemize}
\item[{\rm (i)}] A map between normal dendroidal sets $f\colon X\to Y$ is a Morita weak equivalence if and only if the adjunction
$$
f_!:\dsets/X\myrightleftarrows{\rule{0.4cm}{0cm}} \dsets/Y:f^*
$$
is a Quillen equivalence between the slice categories with the covariant model structures.
\item[{\rm (ii)}] There is a Quillen equivalence
$$
\St_d:\dsets_{\Morita}\myrightleftarrows{\rule{0.4cm}{0cm}} \sOper_{\Morita}: \hcN_d,
$$
where $\hcN_d$ is the homotopy coherent dendroidal nerve and $\St_d$ is its left adjoint.
\end{itemize}
\end{theod}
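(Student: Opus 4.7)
The plan is to mirror the proof strategy used for Theorem~2 on quasicategories, now using the unlocalized Quillen equivalence $\St_d \dashv \hcN_d$ between dendroidal sets (with the operadic model structure) and simplicial operads (with the Cisinski--Moerdijk model structure), together with Theorem~3 playing the role that Theorem~1 plays in the quasicategorical setting. Part~(ii) will be established first by transferring Bousfield localizations along this Quillen equivalence, and part~(i) will then follow by combining~(ii) with a dendroidal straightening equivalence that relates slice categories of dendroidal sets to categories of algebras over the rigidified operad.

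For part~(ii), I would apply the standard result that a Quillen equivalence descends to a Quillen equivalence between left Bousfield localizations whenever the localizing sets of maps correspond under the derived adjunction (Hirschhorn, Proposition~3.3.18). Since $\dsets_{\Morita}$ is by definition the localization of the operadic model structure at $N_d j_!(\iota)$, and $\sOper_{\Morita}$ is, by Theorem~3(ii), the localization of the Cisinski--Moerdijk model structure at a set of maps equivalent to $j_!(\iota)$, it suffices to identify $\mathbb{L}\St_d(N_d j_!(\iota))$ with $j_!(\iota)$ in $\mathrm{Ho}(\sOper)$. This comes from the compatibility between $N_d$ and $\hcN_d$ on strict operads together with cofibrancy of $N_d j_!(\iota)$ inside normal dendroidal sets.

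For part~(i), the key input is a dendroidal analog of Lurie's straightening-unstraightening equivalence: for every normal dendroidal set $X$, a natural Quillen equivalence between $\dsets/X$ (with the covariant model structure) and $\Alg(\St_d(X))$ (with the projective model structure). Granting this, the Quillen pair $f_! \dashv f^*$ on slices corresponds up to a zigzag of natural weak equivalences to $\St_d(f)_! \dashv \St_d(f)^*$ on categories of algebras. Since the rigidification of a normal dendroidal set is $\Sigma$-cofibrant, Theorem~3(i) applies and shows that $f_! \dashv f^*$ is a Quillen equivalence on slices if and only if $\St_d(f)$ is a Morita weak equivalence of simplicial operads; by part~(ii), this is equivalent to $f$ itself being a Morita weak equivalence of dendroidal sets.

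The principal technical obstacle is establishing the dendroidal straightening Quillen equivalence $\dsets/X \simeq \Alg(\St_d(X))$ with sufficient naturality in $X$, since this is the step that actually transports the multi-linear algebraic content between the two sides and is analogous to, but more involved than, the straightening input used in Theorem~2(i). A secondary point, which should be handled by induction along a normal cellular presentation of $X$, is the $\Sigma$-cofibrancy of $\St_d(X)$, needed in order to invoke Theorem~3(i).
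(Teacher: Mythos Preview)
Your proposal is correct and follows essentially the same route as the paper: part~(ii) is obtained by transferring the Bousfield localization along the Cisinski--Moerdijk Quillen equivalence $\St_d\dashv\hcN_d$ via the identification $\St_d N_d j_!(\iota)\sim j_!(\iota)$ (the paper invokes \cite[Theorem~3.3.20(1)(b)]{Hi03}), and part~(i) is deduced from~(ii) together with the dendroidal straightening equivalence $\dsets/X\simeq\Alg(\St_d(X))$ and the algebra characterization of Morita weak equivalences of simplicial operads. Two points you flag as obstacles are in fact already available: the straightening equivalence for normal $X$ is a theorem of Heuts (and Boavida de Brito--Moerdijk), and the $\Sigma$-cofibrancy of $\St_d(X)$ follows immediately from $\St_d$ being left Quillen on normal objects together with the fact that Cisinski--Moerdijk cofibrant simplicial operads are $\Sigma$-cofibrant, so no separate cellular induction is needed.
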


\bigskip
\noindent {\bf Organization of the paper.} In Section 1, we prove the existence of the Morita model structure on small categories. In Section 2, we define the Morita weak equivalences of simplicial categories and prove the existence of the Morita model structure, characterizing the weak equivalences in terms of categories of simplicial presheaves. In Section 3 we define the Morita model structure for quasicategories as a left Bousfield localization of the Joyal model structure and characterize the weak equivalences in terms of slice categories. In Section 4, we prove the existence of the Morita model structure on operads. In Section 5, we define the Morita weak equivalences of simplicial operads and prove the existence of the Morita model structure. We then use the results of~\cite{CG18} to characterize the Morita weak equivalences in terms of categories of algebras. Finally, in Section 6 we define the Morita model structure on dendroidal sets as a left Bousfield localization of the Cisinski--Moerdijk model structure and characterize the weak equivalences in terms of slice categories.

\bigskip
\noindent {\bf Acknowledgements.} We would like to thank Joost Nuiten for several useful conversations related to the subject of this paper. We would also like to thank the referee for very useful comments and suggestions that helped improving the
presentation of the paper.

The second named author was supported by the Spanish Ministry of Economy under the grants MTM2016-76453-C2-2-P (AEI/FEDER, UE) and RYC-2014-15328 (Ram\'on y Cajal Program).

\section{The Morita model structure for categories}
\label{sect:Morita_cat}
We begin this section by recalling the canonical model structure on small categories~\cite{JT91}, \cite{Re00}, where the weak equivalences are the categorical equivalences and the fibrations are the functors that lift isomorphisms. After that, we introduce the notion of Morita equivalence of small categories. These are the functors that are fully faithful and essentially surjective up to retracts. We prove that there is a Morita model structure on small categories, obtained as a left Bousfield localization of the canonical model structure, whose weak equivalences are the Morita equivalences.

\subsection{The canonical model structure}\label{subsect:can_cat}

An \emph{isofibration} of categories is a functor that lifts isomorphisms,  that is, a functor that has the right lifting property with respect to the inclusion $0\to J$, where $0$ is the category with one object and only the identity morphism, and $J$ is the category with two objects $0$ and $1$ and one isomorphism between them.

The category of small categories admits a cofibrantly generated proper model structure, called the \emph{canonical model structure}, in which weak equivalences are the categorical equivalences, that is, fully faithful and essentially surjective functors; fibrations are the isofibrations; and cofibrations are the functors that are injective on objects; see~\cite[Theorem 4]{JT91}, \cite[Theorem 3.1]{Re00}. A set of generating trivial cofibrations consists of the map $0\to J$ and a set of generating cofibrations consists of the maps $\emptyset\to 0$, $0\coprod 0\to I$ and $P\to I$, where $I$ denotes the category with two objects and exactly one non-identity map between them, and $P$ denotes the category with two objects and exactly two parallel arrows (the morphism $P\to I$ is the obvious map sending the two parallel arrows to the non-identity morphism).

The canonical model structure on $\Cat$ is a simplicial model structure\cite[Theorem 6.2]{Re00}. Given two categories $\mathcal{C}$ and $\mathcal{D}$, the simplicial enrichment is defined as $\Map(\mathcal{C}, \mathcal{D})=N(\Iso(\Fun(\mathcal{C},\mathcal{D})))$, where $\Fun(\mathcal{C},{\mathcal{D}})$ denotes the category of functors from $\mathcal{C}$ to $\mathcal{D}$ and $N$ is the nerve functor from small categories to simplicial sets. Given a category $\mathcal{C}$ we denote by $\Iso(\mathcal{C})$ the maximal subgroupoid of $\mathcal{C}$, that is, $\Iso(\mathcal{C})$ has the same objects as $\mathcal{C}$ and the morphisms are the isomorphisms.

\subsection{The Morita model structure}\label{subsect:Morita_cat}

We say that a functor $f\colon \mathcal{C}\to \mathcal{D}$ between small categories is \emph{essentially surjective up to retracts} if every object in $\mathcal{D}$ is isomorphic to a retract of an object in the image of $f$. 
\begin{defi}
A functor between small categories is called a \emph{Morita equivalence} if it is fully faithful and essentially surjective up to retracts.
\end{defi}

Let $\Idem$ be the category freely generated by one object $0$ and one non-identity arrow $e$ such that $e\circ e=e$, and let $\Split$ be the category freely generated by two objects $0$ and $1$ and two non-identity arrows $r\colon 0\to 1$ and $i\colon 1\to 0$ such that $r\circ i={\rm id}$.
We denote by $\iota\colon\Idem\to\Split$ the fully faithful functor that sends $0$ to $0$ and $e$ to $i\circ r$. A functor between small categories \emph{lifts split idempotents} if it has the right lifting property with respect to $\iota$. The functor $\iota$ will play an important role throughout the paper.

An \emph{idempotent} in a category $\mathcal{C}$ is a morphism $e\colon x\to x$ such that $e\circ e=e$. An idempotent $e$ \emph{splits} if $e=i\circ r$, with $i\colon y\to x$, $r\colon x\to y$ and $r\circ i={\rm id}$.
If $e$ splits, then the splitting is unique (up to unique isomorphism) since $i$ is the equalizer and $r$ is the coequalizer of the diagram $e, {\rm id}\colon x\rightrightarrows x$, respectively.

A category in which every idempotent splits is called \emph{Cauchy complete} or \emph{Karoubi complete}. Observe that a category $\mathcal{C}$ is Cauchy complete if and only if the map $\mathcal{C}\to 0$ has the right lifting property with respect to $\iota\colon{\rm Idem}\to {\rm Split}$.

There is an explicit construction of the Cauchy completion $\overline{\mathcal{C}}$ of a category.
The objects of $\overline{\mathcal{C}}$ are pairs $(x, e)$, where $x$ is an object of $\mathcal{C}$ and
$e\colon x\to x$ is an idempotent. A morphism $g\colon(x, e) \to (x', e')$ in $\overline{\mathcal{C}}$ is a morphism $g\colon x\to x'$ such that $g\circ e = g = e'\circ g$. The canonical functor $\mathcal{C}\to \overline{\mathcal{C}}$ that sends $x$ to $(x,{\rm id}_x)$ is a Morita equivalence.

Given a category $\mathcal{C}$, we denote by $\widehat{\mathcal{C}}$ its category of presheaves, that is, the category of functors $\mathcal{C}^{\rm op}\to \sets$. A classical result in category theory states that the following are equivalent for a functor $f\colon\mathcal{C}\to \mathcal{D}$ between small categories (see~\cite[Theorem 3.6']{EZ76}, \cite[Theorem 1]{BD86}):
\begin{itemize}
\item[{\rm (i)}] $f$ is a Morita equivalence.
\item[{\rm (ii)}] The left Kan extension $f_!:\widehat{\mathcal{C}}\rightleftarrows\widehat{\mathcal{D}}: f^*$ is an equivalence of categories.
\item[{\rm (iii)}] $\overline{f}\colon \overline{\mathcal{C}}\to\overline{\mathcal{D}}$ is an equivalence of categories.
\end{itemize}
Note also that a functor $f$ is a Morita equivalence if and only if $f^{\rm op}$ is a Morita equivalence.

\begin{theo}\label{thm:Morita_cat}
There is a cofibrantly generated model structure $\Cat_{\Morita}$ on the category of small categories in which the weak equivalences are the Morita equivalences and the cofibrations are the functors that are injective on objects. The fibrant objects are the Cauchy complete categories.
\end{theo}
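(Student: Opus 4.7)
The plan is to construct $\Cat_{\Morita}$ as the left Bousfield localization of the canonical model structure on $\Cat$ at the single map $\iota\colon \Idem \to \Split$. Since the canonical model structure is cofibrantly generated, left proper, and simplicial, and $\Cat$ is locally presentable, the standard theory of left Bousfield localizations produces a cofibrantly generated, left proper, simplicial model structure with the same cofibrations as the canonical one. This immediately yields that the cofibrations in $\Cat_{\Morita}$ are exactly the functors injective on objects.

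To identify the fibrant objects, I would first observe that every small category is fibrant in the canonical model structure, so $\mathcal{C}$ is fibrant in the localization iff the induced map $\Map(\Split,\mathcal{C}) \to \Map(\Idem,\mathcal{C})$ is a trivial fibration of simplicial sets. Because $\iota$ is a cofibration between cofibrant objects (every small category being cofibrant), this map is automatically a fibration, so one only needs it to be a weak equivalence. Unpacking $\Map(-,-) = N(\Iso(\Fun(-,-)))$, this amounts to the restriction $\Iso(\Fun(\Split,\mathcal{C})) \to \Iso(\Fun(\Idem,\mathcal{C}))$ being an equivalence of groupoids. A direct analysis shows this holds precisely when every idempotent in $\mathcal{C}$ splits, using that whenever a splitting exists it is unique up to unique isomorphism; hence the fibrant objects are exactly the Cauchy complete categories.

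For the identification of the weak equivalences, I would reduce to the classical equivalence (i)$\Leftrightarrow$(iii) recalled in the excerpt, via the Cauchy completion $\eta_{\mathcal{C}}\colon \mathcal{C}\to\overline{\mathcal{C}}$. First, each $\eta_{\mathcal{C}}$ is a local weak equivalence: for every Cauchy complete $\mathcal{E}$, restriction along $\eta_{\mathcal{C}}$ gives an equivalence of categories $\Fun(\overline{\mathcal{C}},\mathcal{E}) \to \Fun(\mathcal{C},\mathcal{E})$ (the universal property of Cauchy completion against Cauchy complete targets), and passing to cores and nerves yields a weak equivalence on mapping spaces. A general map $f\colon \mathcal{C}\to\mathcal{D}$ then fits into a naturality square with $\overline{f}$ and the completion maps, so by two-out-of-three $f$ is a local weak equivalence iff $\overline{f}$ is. Since $\overline{f}$ is a map between fibrant objects of the localization, it is a local weak equivalence iff it is a weak equivalence in the canonical model structure, and by the classical (i)$\Leftrightarrow$(iii) this is equivalent to $f$ being a Morita equivalence.

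The main obstacle I expect lies in the careful verification of the two enrichment-level statements: that $\Iso(\Fun(\Split,\mathcal{C})) \to \Iso(\Fun(\Idem,\mathcal{C}))$ being an equivalence of groupoids is equivalent to Cauchy completeness of $\mathcal{C}$, and that restriction along the Cauchy completion is an equivalence of functor categories for Cauchy complete targets. Once these two facts are in hand, the rest of the proof is formal, relying on the machinery of left Bousfield localization applied to the left proper, cofibrantly generated, simplicial model category $\Cat$ and the classical characterization of Morita equivalences quoted just before the theorem.
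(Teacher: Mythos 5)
Your proposal is correct and follows essentially the same route as the paper: define $\Cat_{\Morita}$ as the left Bousfield localization of the canonical model structure at $\iota\colon\Idem\to\Split$, identify the $\iota$-local objects as the Cauchy complete categories by analyzing $\Iso(\Fun(\Split,\mathcal{C}))\to\Iso(\Fun(\Idem,\mathcal{C}))$, and identify the local equivalences via the naturality square for Cauchy completion together with two-out-of-three. The only (minor) divergence is that you verify the completion maps $\eta_{\mathcal{C}}$ are local equivalences directly from the universal property of Cauchy completion against Cauchy complete targets, whereas the paper shows more generally that every Morita equivalence is a local equivalence by embedding $\Fun(\mathcal{C},\mathcal{A})$ into $\Fun(\mathcal{C},\sets^{\mathcal{A}\op})$; both arguments are sound and interchangeable here.
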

\begin{proof}
The model structure $\Cat_{\Morita}$ is the left Bousfield localization of the canonical model structure on $\Cat$ with respect to the functor $\iota\colon{\rm Idem}\to {\rm Split}$. We just need to identify the fibrant objects and the weak equivalences of the local model structure.

The fibrant objects of $\Cat_{\Morita}$ are the $\iota$-local categories. It follows from the general theory of homotopy function complexes that if a category $\mathcal{C}$ is $\iota$-local, then $\mathcal{C}\to 0$ has the right lifting property with respect to $\iota$; see \cite[Corollary~17.7.5(2)]{Hi03}.

Conversely, let $\mathcal{C}$ be a Cauchy complete category. Then the induced functor
$$
\iota^*\colon \Iso(\Fun({\Split},\mathcal{D}))\longrightarrow \Iso(\Fun(\Idem, \mathcal{C}))
$$
is surjective on objects (in particular, essentially surjective). But $\iota^*$ is also fully faithful, because the splitting of idempotents is unique up to unique isomorphism (in fact, the functor $\Fun(\Split, \mathcal{C})\to \Fun(\Idem, \mathcal{C})$ is fully faithful if $\mathcal{C}$ is Cauchy complete). Thus $\iota^*$ is an equivalence of categories. Therefore
$$
N(\iota^*)\colon\Map(\Split, \mathcal{C})\longrightarrow \Map(\Idem,\mathcal{C})
$$
is a weak equivalence of simplicial sets and hence $\mathcal{C}$ is $\iota$-local.

The weak equivalences of $\Cat_{\Morita}$ are the $\iota$-local equivalences. Let
$f\colon \mathcal{C}\to\mathcal{D}$ be a Morita equivalence. For every category $\mathcal{A}$ the map
$f\times {\rm id}_{\mathcal{A}}$ induces an equivalence
of categories
$$
f^*\colon \Fun(\mathcal{D}, \sets^{\mathcal{A}{\op}})\longrightarrow\Fun(\mathcal{C},\sets^{\mathcal{A}{\op}}).
$$
The category $\Fun(\mathcal{C}, \mathcal{A})$ embeds in $\Fun(\mathcal{C},\sets^{\mathcal{A}{\op}})$. Therefore, if $\mathcal{A}$ is Cauchy complete, then
$$
f^*\colon \Fun(\mathcal{D}, \mathcal{A})\longrightarrow \Fun(\mathcal{C}, \mathcal{A})
$$
is fully faithful and essentially surjective. So, $f$ is an $\iota$-local weak equivalence.

Conversely, suppose that $f\colon \mathcal{C}\to \mathcal{D}$ is an $\iota$-local equivalence. Consider the following commutative diagram
$$
\xymatrix{
\mathcal{C}\ar[r]^f \ar[d] & \mathcal{D} \ar[d] \\
\overline{\mathcal{C}} \ar[r]_{\overline{f}} & \overline{\mathcal{D}},
}
$$
where ``overline'' denotes the Cauchy completion functor. The vertical maps are Morita equivalences, and therefore $\iota$-local weak equivalences by the argument in the previous paragraph. So $\overline{f}$ is an $\iota$-local weak equivalence, by the two out of three property for weak equivalences. Since $\overline{\mathcal{C}}$ and $\overline{\mathcal{D}}$ are $\iota$-local, $\overline{f}$ is an equivalence of categories and hence $f$ is a Morita equivalence.
\end{proof}

\section{The Morita model structure for simplicial categories}\label{sect:scat_morita}
In the first part of this section we recall some facts about simplicial categories, basically the description of the Bergner model structure on the category of small simplicial categories and its Quillen equivalence with the Joyal model structure on simplicial sets; see~\cite{Be07}, \cite{Joy07}, \cite[\S 2.2.5 and \S A.3.2]{Lu09} and \cite{DS11} for details.

In the second part, we prove the existence of the Morita model structure for simplicial categories and characterize the weak equivalences in terms of categories of simplicial presheaves. In order to achieve this, we need to develop a theory of \emph{retract intervals} similar to the theory of intervals used in the study of the homotopy theory of enriched categories~\cite{BM13}. Finally we show that the Morita model structure can be also obtained as an explicit left Bousfield localization of the Bergner model structure.

\subsection{Simplicial categories}
A \emph{small simplicial category} $\mathcal{C}$ is a small category  enriched in simplicial sets. For every two objects $x$ and $y$ of $\mathcal{C}$ we denote by $\mathcal{C}(x,y)$ the \emph{simplicial set of morphisms} from $x$ to $y$. For every three objects $x$, $y$ and $z$ in $\mathcal{C}$ there is a map of simplicial sets called the \emph{composition rule}
$$
 \mathcal{C}(y,z)\times \mathcal{C}(x,y)\longrightarrow \mathcal{C}(x,z);
$$
and for every object $x$ in $\mathcal{C}$, there is a map of simplicial sets $*\to \mathcal{C}(x,x)$, where $*$ is the terminal simplicial set, called the \emph{unit}. The composition rule is associative and compatible with the units.

A \emph{map of simplicial categories} $f\colon \mathcal{C}\to \mathcal{D}$ is a simplicial functor from $\mathcal{C}$ to $\mathcal{D}$, that is, a map of sets $f\colon{\rm Ob}(\mathcal{C})\to {\rm Ob}(\mathcal{D})$, where ${\rm Ob}(-)$ denotes the set of objects of the corresponding category, and maps of simplicial sets
$$
\mathcal{C}(x,y)\longrightarrow \mathcal{D}(f(x),f(y))
$$
compatible with the composition rule, for every two objects $x$ and $y$ of $\mathcal{C}$.

We will denote by $\sCat$ the \emph{category of small simplicial categories}. A simplicial category can be also viewed as a simplicial object in the category of small categories which has the same set of objects in every dimension, and the simplicial operators are the identity on the objects.

If $C$ is a set, we will denote by $\sCat_C$ the \emph{category of simplicial categories with $C$ as set of objects}. The morphisms are functors that induce the identity map on the objects.

\subsection{The Bergner model structure}
Let $\Cat$ denote the category of small categories. Then there is a functor $\pi_0\colon\sCat\to \Cat$ that sends a simplicial category~$\mathcal{C}$ to the category $\pi_0(\mathcal{C})$, called the \emph{path component category} of $\mathcal{C}$, which has the same objects as $\mathcal{C}$, and whose set of morphisms from $x$ to $y$ in $\pi_0(\mathcal{C})$ is $\pi_0(\mathcal{C}(x,y))$.

A  map of simplicial categories $f\colon \mathcal{C}\to \mathcal{D}$ is called \emph{homotopically fully faithful} or \emph{local weak equivalence} (respectively a \emph{local fibration}) if the map
$$
\mathcal{C}(x,y)\longrightarrow \mathcal{D}(f(x),f(y))
$$
is a weak equivalence (respectively a fibration) of simplicial sets, for every two objects $x$ and $y$ of $\mathcal{C}$.  A map of simplicial categories $f$ is called \emph{essentially surjective} (respectively an \emph{isofibration}) if the functor $\pi_0(f)$ is essentially surjective (respectively an isofibration of categories). A map of simplicial categories that is homotopically fully faithful and essentially surjective is called a \emph{Dwyer--Kan equivalence}.

The category of simplicial categories with a fixed set of objects and the category of simplicial categories admit model structures with weak equivalences and fibrations described in terms of the previous maps~\cite[Proposition 7.2]{DK80}, \cite[Theorem~1.1]{Be07}.

\begin{theo}[Dwyer--Kan]
There is a cofibrantly generated simplicial proper model structure on the category of simplicial categories with a fixed set $C$ of objects $\sCat_C$, in which weak equivalences are the maps that are homotopically fully faithful and fibrations are the maps that are local fibrations.
\end{theo}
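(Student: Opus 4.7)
The plan is to obtain this model structure by transfer along the forgetful functor $U \colon \sCat_C \to (\ssets)^{C \times C}$ sending a simplicial category $\mathcal{C}$ to the family of hom simplicial sets $\{\mathcal{C}(a,b)\}_{(a,b) \in C \times C}$. The functor $U$ admits a left adjoint $F$, the free simplicial category on a $C$-graph, obtained in each simplicial degree by freely composing non-identity edges and adjoining formal identities. The target category $(\ssets)^{C\times C}$ carries the projective model structure with weak equivalences and fibrations defined componentwise; it is cofibrantly generated by the images under $F$ of the standard boundary inclusions $\partial\Delta^n \hookrightarrow \Delta^n$ and horn inclusions $\Lambda^n_k \hookrightarrow \Delta^n$ placed at each pair $(a,b) \in C \times C$. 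Take these sets as the candidate generating (trivial) cofibrations in $\sCat_C$ and invoke Kan's recognition principle for cofibrantly generated model categories.

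Kan's principle reduces the existence of the transferred model structure to two conditions: (i) the candidate generating sets permit the small object argument, and (ii) every relative cell complex built from transferred generating trivial cofibrations is sent by $U$ to a componentwise trivial cofibration in $(\ssets)^{C\times C}$. Condition (i) is routine, since all objects are $\kappa$-small for sufficiently large $\kappa$, the forgetful functor $U$ preserves filtered colimits, and the underlying graphs of domains and codomains of our generators are finite.

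The substantive step is the acyclicity condition (ii). Given a pushout $\mathcal{C}' = \mathcal{C} \coprod_{F(A)} F(B)$ along a coproduct of generating trivial cofibrations $A \to B$ concentrated at various pairs of objects, the hom simplicial sets of $\mathcal{C}'$ are described by a word-length filtration: at stage $n$ one adjoins new $n$-fold alternating composites whose ``new'' letters come from $B \setminus A$ and whose ``old'' letters come from the hom sets of $\mathcal{C}$. Each successive stage is obtained from the previous one by a pushout along an iterated pushout-product of the maps $A \to B$ with the hom sets of $\mathcal{C}$. Because pushout-products of trivial cofibrations with arbitrary monomorphisms are trivial cofibrations in $\ssets$, and because trivial cofibrations are closed under pushouts and transfinite composition, each hom map $\mathcal{C}(a,b) \to \mathcal{C}'(a,b)$ is a trivial cofibration of simplicial sets. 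Setting up this filtration carefully, and checking that at each stage the attaching map really is a pushout-product of the claimed form, is the main obstacle and the only non-formal point in the proof.

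The remaining structure is inherited. Simplicial enrichment is provided by the cotensor: for $K \in \ssets$ and $\mathcal{C} \in \sCat_C$, let $\mathcal{C}^K$ be the simplicial category with the same objects and hom simplicial sets $\mathcal{C}(a,b)^K$, with composition induced by the diagonal of $K$; the simplicial pushout-product axiom then reduces to the componentwise version in $\ssets$. Left and right properness likewise follow pointwise, because weak equivalences, cofibrations and fibrations in $\sCat_C$ are all detected hom-wise, while pushouts along cofibrations and pullbacks along fibrations in $\sCat_C$ are computed componentwise in $\ssets$, which is itself proper.
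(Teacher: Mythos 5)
The paper does not prove this theorem; it recalls it from Dwyer--Kan \cite{DK80} (see also \cite{Be07}). Your overall strategy---transfer along the free--forgetful adjunction $F\dashv U$ between $C$-coloured simplicial graphs and $\sCat_C$, with the acyclicity condition verified by a word-length filtration of pushouts along free maps---is exactly the standard proof, and you correctly isolate that filtration as the only non-formal step. The existence of the cofibrantly generated model structure and the simplicial enrichment (checking SM7 in its cotensor form, which involves only fibrations, weak equivalences and the hom-wise cotensor) are fine in outline.

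There is, however, a genuine error in your treatment of properness. You assert that cofibrations in $\sCat_C$ are detected hom-wise and that pushouts along cofibrations are computed componentwise in $\ssets$. Neither is true: cofibrations in the transferred structure are retracts of relative cell complexes on the free generators, which is not a hom-wise condition, and $U$ is a right adjoint, so it preserves limits but emphatically not pushouts. Indeed, if pushouts along cofibrations were computed hom-wise, the acyclicity condition you rightly flag as the substantive step would be a triviality, so your properness paragraph contradicts your existence paragraph. Right properness does follow pointwise, since limits, fibrations and weak equivalences are all hom-wise and $\ssets$ is right proper. Left properness, by contrast, requires a second pass through the same filtration: one shows that a cofibration in $\sCat_C$ is hom-wise a cofibration of simplicial sets (cf.\ \cite{Mu14}, invoked elsewhere in the paper for exactly this point), and that pushing out a hom-wise weak equivalence along a free cofibration produces, stage by stage in the word filtration, pushouts of weak equivalences along cofibrations in $\ssets$, where left properness of $\ssets$ and cofibrancy of all its objects apply.
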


\begin{theo}[Bergner]
There is a cofibrantly generated proper model structure on the category of simplicial categories $\sCat$, called the Bergner model structure, in which weak equivalences are the Dwyer--Kan equivalences and fibrations are the maps that are local fibrations and isofibrations.
\end{theo}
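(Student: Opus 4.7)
The plan is to invoke Kan's recognition principle for cofibrantly generated model categories, using the Dwyer--Kan model structure on $\sCat_C$ to handle the local behaviour and a single additional generator to force essential surjectivity. Write $U\colon\ssets\to\sCat$ for the functor sending a simplicial set $K$ to the simplicial category on two objects $\{0,1\}$ with $U(K)(0,1)=K$ and no other non-identity morphisms, and $[0]$ for the one-object simplicial category with only the identity. I would propose
\[
I=\{U(\partial\Delta^n\hookrightarrow\Delta^n)\mid n\ge 0\}\cup\{\emptyset\to[0]\}
\]
as generating cofibrations and
\[
J=\{U(\Lambda^n_k\hookrightarrow\Delta^n)\mid 0\le k\le n,\ n\ge 1\}\cup\{[0]\to H\}
\]
as generating trivial cofibrations, where $H$ is a cofibrant simplicial category on two objects with a freely adjoined homotopy equivalence between them.

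The routine verifications come first. The sources of $I$ and $J$ are finitely presentable in $\sCat$, so the small object argument applies. A direct diagram chase identifies the $J$-injectives with the maps that are local fibrations and that lift the chosen equivalence; applying $\pi_0$ shows that this is exactly the class of local fibrations that are also isofibrations. Similarly, the $I$-injectives are the trivial local fibrations that are surjective on objects, and I would check that this class coincides with the $J$-injectives that are also Dwyer--Kan equivalences, using that the isofibration property upgrades essential surjectivity to surjectivity on objects. Pushouts and transfinite compositions of the local generating trivial cofibrations $U(\Lambda^n_k\hookrightarrow\Delta^n)$ are Dwyer--Kan equivalences by the Dwyer--Kan theorem for $\sCat_C$.

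The main obstacle is to show that pushouts of $[0]\to H$ along arbitrary maps $[0]\to\mathcal{C}$, and transfinite compositions of such pushouts, remain Dwyer--Kan equivalences. Essential surjectivity is automatic, since $\mathcal{C}\amalg_{[0]}H$ only adjoins an object equivalent to one already present. The fully faithful part requires a hands-on computation of the mapping spaces in the pushout: the strategy is to present $\mathcal{C}\amalg_{[0]}H$ as a filtered colimit indexed by alternating composable words in $\mathcal{C}$ and in $H$, and to exploit the contractibility of the mapping spaces of $H$ to show that only length-zero words contribute up to weak equivalence. The cofibrancy of $H$, together with a careful cellular presentation, is what makes this colimit behave like a homotopy colimit; this is the technical heart of the argument and where the flexibility of a homotopy equivalence (rather than a strict isomorphism) is essential.

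Finally, left properness follows from the injectivity on objects of cofibrations together with a standard gluing lemma in $\ssets$, since cofibrations in $\sCat$ restrict to monomorphisms on mapping spaces between pre-existing objects. Right properness reduces, by a fibrewise argument on mapping spaces and on $\pi_0$, to the right properness of the Kan--Quillen model structure on $\ssets$ and of the canonical model structure on $\Cat$ recalled in Section~\ref{subsect:can_cat}.
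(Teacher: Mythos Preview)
The paper does not give its own proof of this theorem; it is stated with attribution to Bergner~\cite{Be07}, and the remark immediately following attributes left properness to~\cite{Be07} and right properness to~\cite{CM13}. What the paper does record, a few paragraphs later, is the explicit generating sets, and that is where your proposal diverges.

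Your outline is Bergner's strategy, but with one substantive change: you take a \emph{single} interval $H$, whereas Bergner---and the paper's description---uses the inclusions $0\to\mathbb{G}$ for $\mathbb{G}$ ranging over a \emph{generating set of intervals} $\mathcal{G}$. This is not merely cosmetic. In the step identifying $J$-injectives with local fibrations that are isofibrations, the implication ``right lifting against $[0]\to H$ implies isofibration on $\pi_0$'' requires that every isomorphism in $\pi_0(\mathcal{D})$ be realised by a simplicial functor $H\to\mathcal{D}$ from your fixed $H$; an arbitrary cofibrant interval does not have this extension property, and the phrase ``freely adjoined homotopy equivalence'' is suggestive but does not name a universal property that would supply it. With a generating set one sidesteps the issue by choosing an interval adapted to the given homotopy equivalence in $\mathcal{D}$ (essentially a cofibrant replacement, in $\sCat_{\{0,1\}}$, of the full subcategory on the two objects in question). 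You should either specify $H$ precisely and prove this surjectivity, or follow the paper and work with a set of intervals.

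Two smaller points. A cofibrant interval is typically not finitely presentable; Bergner restricts to intervals with countably many simplices in each function complex and uses $\aleph_0$-smallness relative to $\mathcal{I}$-cell, not finite presentability. And your reduction of right properness is slightly miscast: $\pi_0$ does not preserve pullbacks in $\sCat$, so one cannot literally invoke right properness of $\Cat$ for the essential-surjectivity half; what one actually uses there is the isofibration property of the map being pulled back, to manufacture an object of the pullback over a given object of the base. With that correction the argument does go through, though the paper itself simply cites~\cite[Corollary~8.10]{CM13}.
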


\begin{rem}
Right properness for the Bergner model structure was proved in~\cite[Proposition 3.5]{Be07} while left properness was proved in~\cite[Proposition A.3.2.4]{Lu09} (see also \cite[Corollary 8.10]{CM13}).
\end{rem}
A set of generating cofibrations of the Bergner model structure can be described explicitly (see~\cite[Section 2]{Be07}). Given a simplicial set $X$ and $n\ge 0$, let $\Ug(X)$ denote the simplicial category with two objects $0$ and $1$ and whose only non-trivial simplicial set of morphisms is
$\Ug(X)(0,1)=X$. The set $\mathcal{I}$ of generating cofibrations of $\sCat$ consists of:
\begin{itemize}
\item[{\rm (i)}] The map $\emptyset\to 0$, where $0$ is the terminal simplicial category.
\item[{\rm (ii)}] The maps $\Ug(\partial\Delta[n])\to \Ug(\Delta[n])$ for $n\ge 0$.
\end{itemize}

In order to give a description of the set of generating trivial cofibrations, we need to recall first the notion of interval and generating set of intervals from~\cite{BM13}. Let $\mathbb{I}$ be the simplicial category with two objects $\{0,1\}$ representing a single isomorphism, that is, $\mathbb{I}(0, 0)= \mathbb{I}(1, 0) = \mathbb{I}(0, 1) = \mathbb{I}(1, 1) = *$.  Following the terminology of \cite[Definition 1.11]{BM13}, a simplicial category is called an $\emph{interval}$ if it is cofibrant in $\sCat_{\{0,1\}}$ and weakly equivalent to $\mathbb{I}$. A set $\mathcal{G}$ of intervals is called a \emph{generating set} if every interval is a retract of a trivial extension of an object in $\mathcal{G}$. More explicitly, if for every interval $\mathbb{H}$ there exist another interval $\mathbb{K}$, an interval $\mathbb{G}$ in $\mathcal{G}$ and a diagram in $\sCat_{\{0,1\}}$
$$
\xymatrix{
\mathbb{G} \ar[r] & \mathbb{K} \ar@/^/[r]^{r} & \ar@/^/[l]^{i}\mathbb{H},
}
$$
where $\mathbb{G}\to\mathbb{K}$ is a trivial cofibration in $\sCat_{\{0,1\}}$ and $r\circ i={\rm id}$. Since the model category of simplicial sets is combinatorial and monoidal,  \cite[Lemma 1.12]{BM13} implies that there exists a set of generating intervals in $\sCat$.

The set $\mathcal{J}$ of generating trivial cofibrations of $\sCat$ consists of:
\begin{itemize}
\item[{\rm (i)}] The maps $\Ug(\Lambda[n, k])\to \Ug(\Delta[n])$ for $n\ge 1$ and $0\le k\le n$.
\item[{\rm (ii)}] The inclusions $0\to \mathbb{G}$, where $\mathbb{G}$ runs over all the elements of a generating set of intervals $\mathcal{G}$.
\end{itemize}

The category of simplicial sets and the category of simplicial categories are related by a pair of adjoint functors, whose right adjoint is Cordier's homotopy coherent nerve. Given $n\ge 0$, let $[n]$ be the category freely generated by $n+1$ objects and just one non-identity arrow $i\to i+1$, for $0\le i< n$. Cordier described in~\cite{Cor82} how to associate to~$[n]$ a ``simplicial resolution'' $C_*[n]$ (cf. \cite[Definition 1.1.5.1]{Lu09}). This construction defines a functor $C_*\colon\Delta\to\sCat$ which, by left Kan extension, gives an adjunction
\begin{equation}\label{eq:adj_hcN}
\St:\ssets\myrightleftarrows{\rule{0.4cm}{0cm}} \sCat: \hcN,
\end{equation}
where the right adjoint is called the \emph{homotopy coherent nerve}. Given a simplicial category $\mathcal{C}$, the homotopy coherent nerve is defined by
$$
\hcN(\mathcal{C})_n=\sCat(C_*[n], \mathcal{C}).
$$
Joyal proved in \cite[Theorem 2.10]{Joy07} that  adjunction~\eqref{eq:adj_hcN} is a Quillen equivalence; see also~\cite[Theorem 2.2.5.1]{Lu09}, \cite[Theorem 1.5]{DS11} and \cite[Corollary 8.16]{CM13}.

\begin{theo}[Joyal, Lurie]
The adjunction $\St:\ssets\rightleftarrows\sCat:\hcN$ is a Quillen equivalence, where $\ssets$ is endowed with the Joyal model structure and $\sCat$ is endowed with the Bergner model structure.
\end{theo}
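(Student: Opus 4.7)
The plan is to establish in two steps that (i) $(\St, \hcN)$ is a Quillen adjunction and (ii) the derived unit and counit are weak equivalences.

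For (i), I would check that $\St$ sends generating (trivial) cofibrations of the Joyal model structure to cofibrations (respectively trivial cofibrations) in the Bergner structure on $\sCat$. Using Cordier's explicit construction of $C_*[n]$, the simplicial category $\St(\Delta[n])$ has object set $\{0,1,\ldots,n\}$, and each mapping space $\St(\Delta[n])(i,j)$ is the nerve of the poset of subsets of $[i,j]$ containing both endpoints, i.e.\ a cube of dimension $j-i-1$. It follows that $\St(\partial\Delta[n]) \to \St(\Delta[n])$ is the identity on objects and a levelwise simplicial cofibration on every mapping space, hence a Bergner cofibration; a similar analysis handles the coproducts of boundary inclusions. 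For the generating Joyal trivial cofibrations (the inner horn inclusions and the map $\Delta[0]\to J$ detecting the walking isomorphism), direct mapping-space computations show that $\St$ sends them to Dwyer--Kan equivalences.

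For (ii), I would use the criterion that a Quillen adjunction is a Quillen equivalence if and only if the derived unit is a weak equivalence on every cofibrant object and the derived counit is a weak equivalence on every fibrant object. Since every simplicial set is Joyal-cofibrant, the task is to show that for every simplicial set $X$ the map $X \to \hcN(\St(X)^{\rm fib})$ is a Joyal equivalence, and that for every Bergner-fibrant (i.e.\ locally Kan) simplicial category $\mathcal{C}$ the counit $\St(\hcN(\mathcal{C}))^{\rm cof} \to \mathcal{C}$ is a Dwyer--Kan equivalence. The second condition reduces to a hom-by-hom comparison of $\St(\hcN(\mathcal{C}))(x,y)$ with $\mathcal{C}(x,y)$, together with the observation that the object map is bijective.

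The main obstacle will be gaining control over $\St(\hcN(\mathcal{C}))(x,y)$, since $\St$ is defined by a left Kan extension and its values on a general simplicial set are not manifestly tractable. A powerful tool here is the necklace model of Dugger--Spivak, which identifies $\St(X)(x,y)$ with the homotopy colimit of an explicit diagram indexed by necklaces from $x$ to $y$ in $X$. Applied to $X = \hcN(\mathcal{C})$, the coherent structure of the mapping spaces of $\mathcal{C}$ embeds in this diagram in such a way that it admits a cofinal sub-diagram whose colimit is naturally weakly equivalent to $\mathcal{C}(x,y)$. The remaining verifications are bookkeeping using the two-out-of-three property, properness of the Bergner model structure, and the fact that $\hcN$ takes Bergner-fibrant objects to quasicategories (which follows immediately from the lifting characterisation of the inner horn images under $\St$).
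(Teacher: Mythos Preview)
The paper does not prove this theorem: it is stated with attribution and references to Joyal~\cite[Theorem 2.10]{Joy07}, Lurie~\cite[Theorem 2.2.5.1]{Lu09}, and Cisinski--Moerdijk~\cite[Corollary 8.16]{CM13}, and used as background input. So there is no ``paper's own proof'' to compare against; what follows is an assessment of your sketch on its own merits.

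Your part (ii), invoking the Dugger--Spivak necklace description of $\St(X)(x,y)$ to control the derived counit, is a legitimate and well-known route (indeed different from Lurie's original argument), though what you write is only a rough outline; the actual cofinality and comparison arguments are where the content lies.

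Part (i), however, contains a genuine gap. You propose to verify that $\St$ is left Quillen by checking it on ``the generating Joyal trivial cofibrations (the inner horn inclusions and the map $\Delta[0]\to J$)''. These maps do \emph{not} generate the trivial cofibrations of the Joyal model structure. A map having the right lifting property against inner horns and against $\{0\}\hookrightarrow J$ is an inner isofibration, and between quasicategories this coincides with being a Joyal fibration; but for general targets the class of Joyal fibrations is strictly smaller, so the saturated class generated by your proposed set is strictly smaller than the class of Joyal trivial cofibrations. No explicit small generating set of Joyal trivial cofibrations is known. To repair this step you must argue differently: for instance, show that $\St$ preserves cofibrations (your cube description handles the generating ones) and then show that $\hcN$ preserves fibrations between fibrant objects and trivial fibrations (this uses precisely that Bergner fibrations between locally Kan simplicial categories are detected via inner horns and $\{0\}\to J$ after applying $\hcN$), invoking the standard criterion that this suffices for a Quillen adjunction. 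Alternatively, prove directly that $\St$ sends all categorical equivalences to Dwyer--Kan equivalences, which again requires knowing that $\hcN$ of a Bergner-fibrant category is a quasicategory together with a mapping-space comparison.
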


\subsection{Homotopy idempotents and retracts}

Let $\Ret$ be the simplicial set defined as the following pushout
\[
\xymatrix{\Delta[1] \ar[r]^-{d_1} \ar[d]_-{s_0}&
    \Delta[2] \ar[d]^-{} \\
    \Delta[0] \ar[r]_{} &
    \Ret.
}
\]
The maps of simplicial sets $1\colon \Delta[0]\to N(\Split)$ and $[i,r, r\circ i]\colon \Delta[2]\to N(\Split)$ induce a canonical map $\rho\colon \Ret\to N(\Split)$, where $\Split$ is the category defined in~Section \ref{subsect:Morita_cat} and $N$ is the nerve functor from categories to simplicial sets. Lurie proved in \cite[Proposition 4.4.5.6]{Lu09} that $\rho$ is an inner anodyne map of simplicial sets.
(Lurie uses the notation $\Idem^+$ for what we call $N(\Split)$.)

Let $\mathcal{C}$ be a fibrant simplicial category. Observe that a map of simplicial categories $a\colon {\St(\Delta[2])}\to {\mathcal{C}}$ is uniquely determined by three arrows $f\colon {x}\to {y}$,  $g\colon{y}\to {z}$, and $h\colon {x}\to {z}$
in $\mathcal{C}$ together with a homotopy ${\alpha}\colon {\Delta[1]}\to{\mathcal{C}(x,z)}$ from $g\circ f$ to $h$. Therefore, we will denote such a map $a$ by $[g,f,\alpha]$.

Note that the canonical map ${\sCat(\St(\Ret),\mathcal{C})}\to {\sCat(\St(\Delta[2]),\mathcal{C})}$ is injective. Indeed, $\St$ preserves pushouts, since it is a left adjoint, and an element $[g,f,\alpha]$ is in $\sCat(\St(\Ret),\mathcal{C})$ if and only if the domain of $f$ and the codomain of $g$ are the same object $y$ of $\mathcal{C}$ and $\alpha(1)=\id_y$.

Recall that, for a simplicial category $\mathcal{C}$, we denote by $\pi_0(\mathcal{C})$ its path component category. A functor $b\colon{\Split}\to{\pi_0(\mathcal{C})}$ is completely determined by two morphisms $[r]\in \pi_0(\mathcal{C}(x,y))$ and $[i]\in \pi_0(\mathcal{C}(y,x))$ such that $r\circ i$ is homotopic to $\id_y$ in $\mathcal{C}(y,y)$. Hence we will denote such a functor $b$ by $([r],[i])$.

\begin{lemma}
If $\mathcal{C}$ is a fibrant simplicial category, then the function
\[
{\sCat(\St(\Ret), \mathcal{C})}\to{\Cat(\Split, \pi_0(\mathcal{C}))}\]
that sends $[g, f, \alpha]$ to $([g], [f])$ is well defined and surjective.
\end{lemma}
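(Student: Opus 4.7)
The plan is to unpack both sides using the descriptions given immediately before the lemma and then to isolate the step where fibrancy is actually needed. Well-definedness is essentially formal; surjectivity is the substantive part and requires converting a $\pi_0$-level equality into an honest $1$-simplex in a hom space.

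For well-definedness, recall that an element of $\sCat(\St(\Ret), \mathcal{C})$ is a triple $[g, f, \alpha]$ with $f \colon x \to y$, $g \colon y \to x$, and $\alpha \colon \Delta[1] \to \mathcal{C}(x, x)$ a $1$-simplex from $g \circ f$ to $\id_x$; this follows from the description of $\Ret$ as the pushout that identifies vertices $0$ and $2$ of $\Delta[2]$ and collapses the long edge $[0,2]$ to the identity. Passing to $\pi_0$, the $1$-simplex $\alpha$ witnesses $[g \circ f] = [\id_x]$ in $\pi_0(\mathcal{C}(x, x))$, so $[g] \circ [f] = \id_{[x]}$ in $\pi_0(\mathcal{C})$. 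Since $\Split$ is freely generated by $r$ and $i$ subject to the single relation $r \circ i = \id_1$, the assignment $0 \mapsto y$, $1 \mapsto x$, $r \mapsto [g]$, $i \mapsto [f]$ extends uniquely to a functor $\Split \to \pi_0(\mathcal{C})$, proving well-definedness.

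For surjectivity, let $([r], [i]) \colon \Split \to \pi_0(\mathcal{C})$ be a functor, say sending $0 \mapsto y$ and $1 \mapsto x$. Choose $0$-simplex representatives $g \in \mathcal{C}(y, x)_0$ of $[r]$ and $f \in \mathcal{C}(x, y)_0$ of $[i]$. The relation $r \circ i = \id_1$ in $\Split$ forces $[g \circ f] = [\id_x]$ in $\pi_0(\mathcal{C}(x, x))$, so $g \circ f$ and $\id_x$ lie in the same path component of $\mathcal{C}(x, x)$. To produce the required $[g, f, \alpha]$ we need a direct $1$-simplex $\alpha$ from $g \circ f$ to $\id_x$, not merely a zigzag, and this is exactly where the fibrancy of $\mathcal{C}$ is used: in the Bergner model structure each mapping simplicial set $\mathcal{C}(x, x)$ is a Kan complex, and in a Kan complex the relation of being connected by a single $1$-simplex already coincides with lying in the same path component. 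This yields the desired $\alpha$, and the resulting triple $[g, f, \alpha]$ maps to $([r], [i])$.

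The main (and essentially only) obstacle is the one just isolated: upgrading a $\pi_0$-equality in $\mathcal{C}(x, x)$ to a direct edge. Without the Kan property, one only obtains a zigzag of $1$-simplices connecting $g \circ f$ to $\id_x$, which cannot be assembled into the single $2$-simplex datum needed to define a map out of $\St(\Ret)$. Invoking fibrancy of $\mathcal{C}$ removes this obstruction and closes the argument.
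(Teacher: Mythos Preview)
Your proof is correct and follows essentially the same approach as the paper's: both handle well-definedness formally and prove surjectivity by choosing $0$-simplex representatives of $[r]$ and $[i]$, then using that the hom-spaces of a Bergner-fibrant simplicial category are Kan complexes to produce a single $1$-simplex $\alpha$ witnessing the relation $r\circ i \sim \id$. Your write-up is more explicit about why fibrancy is needed, but the underlying argument is identical.
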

\begin{proof}
The fact that the function is well defined is clear. To prove that it is surjective, let $([r],[i])$ be an element of $\Cat(\Split, \pi_0(\mathcal{C}))$. Let $x$ and $y$ be the domain and the codomain of $r$, respectively. Let ${\alpha}\colon {\Delta[1]}\to {\mathcal{C}(y,y)}$ be a homotopy between $r\circ i$ and $\id_{y}$.
The element $[r,i,\alpha]\in \sCat(\St(\Ret),\mathcal{C})$ is the desired lifting of $([r],[i])$.
\end{proof}

\subsection{The fibered model structure}

Let $f\colon A\to B$ be a map of sets. The map $f$ induces an adjunction between simplicial categories which have $A$ and $B$ as sets of objects
$$
f_!:\sCat_{A}\myrightleftarrows{\rule{0.4cm}{0cm}} \sCat_{B}:f^*.
$$
Thus, giving a map of simplicial categories $f\colon \mathcal{C}\to\mathcal{D}$ is the same as giving a map of sets $f\colon{\rm ob}(\mathcal{C})\to {\rm ob}(\mathcal{D})$ and a map $f^u\colon\mathcal{C}\to f^*(\mathcal{D})$ in $\sCat_{{\rm ob}(\mathcal{C})}$, or equivalently, a
map $f_u\colon f_!(\mathcal{C})\to \mathcal{D}$ in $\sCat_{{\rm ob}(\mathcal{D})}$.
There exists a model structure on $\sCat$, called the \emph{fibered model structure}, such that:
\begin{itemize}
    \item[{\rm (i)}] The weak equivalences are the local weak equivalences which are bijective on objects.
    \item[{\rm (ii)}] The fibration are the local fibrations.
    \item[{\rm (iii)}] The cofibrations are the maps of simplicial categories $f\colon\mathcal{C}\to\mathcal{D}$ such that $f_u\colon f_!(\mathcal{C})\to\mathcal{D}$ is a cofibration in $\sCat_{{\rm ob}(\mathcal{D})}$.
\end{itemize}
The fibered model structure on $\sCat$ was described by Joyal in~\cite[Section 4.1]{Joy07}. It can be also obtained as a particular instance of the \emph{integral model structure}~\cite[Theorem 3.0.12]{HP14}. We will denote the fibered model structure by $\sCat_{\rm fib}$.

\begin{rem}\label{rem:sCat_fib_rp}
The model structure $\sCat_{\rm fib}$ is right proper, since $\sCat_{\rm fib}$ has the same fibrations as $\sCat$, the weak equivalences in $\sCat_{\rm fib}$ are weak equivalences in $\sCat$, and $\sCat$ is right proper.
\end{rem}

\begin{lemma}\label{lem:injcof}
A map of simplicial categories is a cofibration in the fibered model structure and injective on objects if and only if it has the left lifting property with respect to trivial fibrations in the Bergner model structure (that is, with respect to local trivial fibrations surjective on objects).
\end{lemma}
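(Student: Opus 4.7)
The plan is to split each lifting problem in $\sCat$ into a set-theoretic lifting on object sets and a fiber lifting problem in $\sCat_{C}$ for a fixed object set $C$. The preliminary observation is that a trivial fibration in the Bergner model structure is exactly a local trivial fibration that is surjective on objects (applying $\pi_{0}$ sends it to a trivial fibration in the canonical model structure on $\Cat$, whose trivial fibrations are the surjective-on-objects fully faithful functors), while any local trivial fibration in $\sCat_{\mathrm{ob}(\mathcal{D})}$ is in particular bijective on objects and hence a Bergner trivial fibration.

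For the forward direction, suppose $f\colon \mathcal{C}\to\mathcal{D}$ is a cofibration in $\sCat_{\mathrm{fib}}$ that is injective on objects, and let $p\colon \mathcal{E}\to\mathcal{F}$ be a Bergner trivial fibration fitting into a commutative square with sides $u\colon \mathcal{C}\to\mathcal{E}$ and $v\colon \mathcal{D}\to\mathcal{F}$. Since $\mathrm{ob}(f)$ is injective and $\mathrm{ob}(p)$ is surjective, the axiom of choice produces a set-theoretic diagonal filler $g_{0}\colon \mathrm{ob}(\mathcal{D})\to \mathrm{ob}(\mathcal{E})$ with $g_{0}\circ \mathrm{ob}(f)=\mathrm{ob}(u)$ and $\mathrm{ob}(p)\circ g_{0}=\mathrm{ob}(v)$. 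Restricting $\mathcal{E}$ and $\mathcal{F}$ to the object set $\mathrm{ob}(\mathcal{D})$ along $g_{0}$ and $\mathrm{ob}(v)$ yields $g_{0}^{\ast}\mathcal{E}, v^{\ast}\mathcal{F}\in\sCat_{\mathrm{ob}(\mathcal{D})}$, and the induced map $g_{0}^{\ast}\mathcal{E}\to v^{\ast}\mathcal{F}$ is a local trivial fibration. The adjunction $f_{!}\dashv f^{\ast}$ transports the original square into a lifting problem for $f_{u}$ against $g_{0}^{\ast}\mathcal{E}\to v^{\ast}\mathcal{F}$ in $\sCat_{\mathrm{ob}(\mathcal{D})}$, which is solvable by the defining property of fibered cofibrations; post-composing the resulting lift with the canonical map $g_{0}^{\ast}\mathcal{E}\to\mathcal{E}$ yields the desired lift in $\sCat$.

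For the backward direction, suppose $f$ has the left lifting property against every Bergner trivial fibration. To show $f$ is injective on objects, I would test against $\mathbb{I}\to\tcat$ (with $\tcat$ the terminal simplicial category), which is a Bergner trivial fibration because every hom space of $\mathbb{I}$ is terminal. If distinct $a\neq b$ in $\mathrm{ob}(\mathcal{C})$ had $f(a)=f(b)$, the simplicial functor $F\colon \mathcal{C}\to\mathbb{I}$ sending $a\mapsto 0$, $b\mapsto 1$, and all other objects to $0$ would fit into a square whose lift would force $f(a)\neq f(b)$. To verify that $f_{u}$ is a cofibration in $\sCat_{\mathrm{ob}(\mathcal{D})}$, I would take an arbitrary local trivial fibration $q\colon \mathcal{E}\to\mathcal{F}$ in $\sCat_{\mathrm{ob}(\mathcal{D})}$, note that it is bijective on objects and hence a Bergner trivial fibration, and translate a lifting problem for $f_{u}$ against $q$ through $f_{!}\dashv f^{\ast}$ into one for $f$ in $\sCat$; the resulting lift must have identity object map since $q$ does, and hence descends to $\sCat_{\mathrm{ob}(\mathcal{D})}$. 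The main obstacle is the bookkeeping needed to transfer lifting problems across the adjunction $f_{!}\dashv f^{\ast}$ and through the restriction functors $g_{0}^{\ast}$ and $v^{\ast}$, checking that commutativity of the original square in $\sCat$ really corresponds to commutativity of the transported square; once the object-level identifications are in place, this amounts to unwinding definitions.
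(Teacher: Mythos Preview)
Your argument is correct, but it takes a different and more hands-on route than the paper. The paper's proof is a one-line citation: it invokes \cite[Proposition~4.5]{Joy07}, which states that the Bergner cofibrations are exactly the fibered cofibrations that are injective on objects; since Bergner cofibrations are by definition the maps with the left lifting property against Bergner trivial fibrations, the lemma follows immediately.

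What you do instead is essentially reprove (the relevant content of) Joyal's proposition from scratch. You split each lifting problem into an object-level piece (handled by injectivity/surjectivity in $\sets$) and a hom-level piece in $\sCat_{\mathrm{ob}(\mathcal{D})}$, transported through the adjunction $f_!\dashv f^*$. The test against $\mathbb{I}\to\tcat$ for injectivity on objects is a nice concrete step that the paper's citation hides. Your approach has the advantage of being self-contained and of making transparent exactly where each hypothesis is used; the paper's approach is much shorter and leverages that this identification is already recorded in the literature. The bookkeeping you flag at the end is real but routine: once $g_0$ is fixed, the correspondence between squares in $\sCat$ and squares in $\sCat_{\mathrm{ob}(\mathcal{D})}$ is just the decomposition of a simplicial functor into its object map and its $f_u$-component, which the paper recalls just before introducing the fibered model structure.
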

\begin{proof}
By~\cite[Proposition 4.5]{Joy07} the cofibrations in the Bergner model structure are cofibrations in the fibered model structure for simplicial categories, and a cofibration in the fibered model structure is a cofibration in the Bergner model structure if and only if it is injective on objects.
\end{proof}

\subsection{Retract intervals}

Let $I$ be the category with two objects $0$ and $1$ and one non-identity arrow from $0$ to $1$. Let $\sCat^I$ denote the category of functors from $I$ to $\sCat$, that is, the category of arrows of $\sCat$. The category $I$ is a Reedy category and hence $\sCat^I$ (and also $\sCat^I_{\rm fib}$) admits a Reedy model structure; see, for instance \cite[Theorem 15.3.4]{Hi03}.

\begin{defi}
A \emph{retract interval} in $\sCat$ is a cofibrant object in $(\sCat_{\rm fib}^{I})_{{\rm Reedy}}$ weakly equivalent to $\iota\colon{\Idem}\to{\Split}$.
\end{defi}

In other words, a retract interval in $\sCat$ is a map $h\colon E\to R$ of simplicial categories  such that:
\begin{itemize}
    \item[{\rm (i)}] The category $E$ has one object $0$ and it is cofibrant in $\sCat_{\{0\}}$.
    \item[{\rm (ii)}] The category $R$ has two objects $0$ and $1$.
    \item[{\rm (iii)}] The map $h$ sends $0$ to $0$ and its associated map $h_u\colon {h_!E}\to {R}$ is a cofibration in $\sCat_{\{0,1\}}$.
    \item[{\rm (iv)}] There is a zigzag of weak equivalences in $(\sCat_{\rm fib}^{I})_{\rm Reedy}$\[
    \xymatrix{
        E \ar[r]^-{\sim} \ar[d]_-{h} &
        \Idem^f \ar[d]^-{\iota^f}&
        \Idem \ar[l]_-{\sim} \ar[d]^-{\iota}\\
        R \ar[r]_-{\sim} &
        \Split^f &
        \Split, \ar[l]^-{\sim}
    }
    \]
    where $\iota^f$ is a fibrant replacement for $\iota$.
\end{itemize}

\begin{rem}\label{rem:CN_retract}
    Any cofibrant replacement of $\iota$ is a retract interval. In particular, $\St N(\iota)$ is a retract interval
    \[
    \xymatrix{
        \St N(\Idem) \ar[r]^-{\sim} \ar[d]_-{\St N(\iota)} &
        \Idem \ar[d]^-{\iota} \\
        \St N(\Split) \ar[r]_-{\sim} &
        \Split.
    }
    \]
As proved in \cite[Lemma 1.20]{Joy07} or \cite[Theorem 2.2.0.1]{Lu09}, the horizontal maps in the diagram above are weak equivalences in $\sCat$.
\end{rem}

\begin{defi}
    A set $\mathcal{G}$ of retract intervals  in $\sCat$ is called a \emph{generating set} if every retract interval is a retract of a trivial extension of an element of $\mathcal{G}$ in $(\sCat_{\rm fib}^{I})_{{\rm Reedy}}$. That is, for every retract interval $h\colon E\to R$ there exists a commutative diagram
    \[
    \xymatrix{
        G_0 \ar@{ >->}[r]^-{\sim} \ar[d]_{g} &
        G_0' \ar@/^/[r]^{r} \ar[d]^{g'}&
        E \ar@/^/[l]^{i} \ar[d]^-{h}\\
        G_1 \ar@{ >->}[r]_-{\sim} &
        G_1' \ar@/^/[r]^{r} &
        R, \ar@/^/[l]^{i}
    }
    \]
where $r\circ i= {\rm id}$, $G_i\to G_i'$ are trivial cofibrations in $(\sCat_{\rm fib}^{I})_{{\rm Reedy}}$ for $i=0,1$, and~$g\in \mathcal{G}$.
\end{defi}

\begin{lemma}
There exists a generating set of retract intervals in~$\sCat$.
\end{lemma}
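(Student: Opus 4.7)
The plan is to mimic the proof of \cite[Lemma 1.12]{BM13} (existence of a generating set of intervals in $\sCat$), transposed from $\sCat_{\{0,1\}}$ to the Reedy model category $\mathcal{M}:=(\sCat_{\rm fib}^{I})_{\rm Reedy}$ in which retract intervals naturally live. The crucial fact is that $\mathcal{M}$ is combinatorial: $\sCat_{\rm fib}$ is cofibrantly generated on the locally presentable category $\sCat$, and $I=(0\to 1)$ is a finite direct category, so the Reedy lift is again combinatorial.

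With this in hand, I would select a regular cardinal $\kappa$ such that (i) $\mathcal{M}$ is locally $\kappa$-presentable and $\kappa$-combinatorial, (ii) by J.~Smith's theorem the class of weak equivalences of $\mathcal{M}$ is $\kappa$-accessibly embedded in its arrow category, and (iii) every cofibrant object of $\mathcal{M}$ is a $\kappa$-filtered colimit of $\kappa$-presentable cofibrant subobjects. Let $\mathfrak{G}$ be a set of representatives for the isomorphism classes of $\kappa$-presentable retract intervals; this is indeed a set because $\kappa$-presentable objects of $\mathcal{M}$ form an essentially small subcategory.

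For an arbitrary retract interval $h\colon E\to R$, properties (ii) and (iii) permit the extraction of a $\kappa$-presentable cofibrant subobject $g\hookrightarrow h$ which remains weakly equivalent to $\iota$, and hence lies in $\mathfrak{G}$ up to isomorphism. Factoring the inclusion $g\to h$ as a Reedy cofibration $g\hookrightarrow g'$ followed by a Reedy fibration $g'\to h$, the two-out-of-three property forces both legs to be weak equivalences, so that $g\hookrightarrow g'$ is a trivial cofibration and $g'\to h$ is a trivial fibration. Because $h$ is Reedy cofibrant, $\id_h$ lifts along $g'\to h$ to produce a section $h\to g'$; this exhibits $h$ as a retract of the trivial extension $g'$ of $g$, which is precisely the data demanded by the definition of a generating set.

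The delicate step is the third: extracting a $\kappa$-presentable cofibrant subobject of $h$ that is \emph{still} weakly equivalent to $\iota$. This is where combinatoriality does its real work, via the accessibility of weak equivalences; choosing $\kappa$ large enough to simultaneously accommodate (i), (ii), and (iii) is the technical heart of the argument, and proceeds identically to \cite[Lemma 1.12]{BM13}, once one verifies that the underlying model category $\mathcal{M}$ satisfies the hypotheses of that lemma.
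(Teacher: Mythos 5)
Your proposal is correct and follows essentially the same route as the paper: both are adaptations of \cite[Lemma 1.12]{BM13}, using local presentability of the relevant categories together with the accessibility of the class of weak equivalences (the paper phrases this via the slice $\sCat^I/\iota^f$ and the preimage of $\mathcal{W}$ under the inclusion into the arrow category, which is exactly what makes your map $g\to h$ a weak equivalence by two-out-of-three) to extract a set of small generators, and then a Brown-type factorization to exhibit an arbitrary retract interval as a retract of a trivial extension of a generator. The only cosmetic difference is in the last step, where you factor and lift against the resulting trivial fibration while the paper invokes Ken Brown's factorization lemma directly; these yield the same conclusion.
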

\begin{proof}
The existence of such a set can be proved along the same lines as in \cite[Lemma 1.12]{BM13}.  Let $I$ denote the category with two objects and exactly one non-identity map between them. The slice category $\sCat^I/\iota^{f}$ and the category ${(\sCat^I)}^I$ are locally presentable and the inclusion functor
$$
\Phi\colon\sCat^I/\iota^{f}\longrightarrow {(\sCat^I)}^I
$$
is accessible, since $\sCat^I/\iota^{f}$ is closed under filtered colimits in ${(\sCat^I)}^I$; see \cite[Example 2.17(3)]{AR}. Let $\mathcal{W}$ be the full subcategory of ${((\sCat_{\rm fib}^{I})_{{\rm Reedy}}})^I$ spanned by the weak equivalences in $(\sCat_{\rm fib}^{I})_{{\rm Reedy}}$. By~\cite[Corollary A.2.6.6]{Lu09}, $\mathcal{W}$ is an accessible and accessibly embedded subcategory. Thus, the preimage along the inclusion functor $\Phi^{-1}(\mathcal{W})$ is also an accessible and accessibly embedded subcategory and thus it has a set $\mathcal{G}'$ of objects that generate it under filtered colimits. But $\Phi^{-1}(\mathcal{W})$ is precisely the subcategory of $\sCat^I/\iota^{f}$ whose objects are of the form
$$
\xymatrix{
        E \ar[r]^-{\sim} \ar[d] &
        \Idem^f \ar[d]^-{\iota^f}\\
        R \ar[r]_-{\sim} &
        \Split^f.
        }
$$
Note that the map $E\to R$ is not a retract interval, since it is not cofibrant in general. Let $\mathcal{G}$ be the set that consists of taking a cofibrant replacement for every element in $\mathcal{G}'$. Then the elements of $\mathcal{G}$ are now retract intervals. It then follows that for every retract interval $h$ there is a retract interval $g$ in $\mathcal{G}$ and a weak equivalence $g\to h$. By using Ken Brown's lemma~\cite[Lemma 7.7.1]{Hi03}, we can factor $g\to h$  as a trivial cofibration $g\to g'$ followed by a retraction $g'\to h$ of a trivial cofibration $h\to g'$. This means precisely that $\mathcal{G}$ is a generating set of retract intervals.
\end{proof}

\begin{rem}\label{rem:rlp_interval}
Note that if a fibration in $\sCat_{{\rm fib}}$ has the right lifting property with respect to a generating set of retract intervals $\mathcal{G}$, then it has the right lifting property with respect to every retract interval.
\end{rem}

\begin{defi}
    Let $x$ and $y$ be two objects in a simplicial category $\mathcal{C}$. We say that
    \begin{itemize}
        \item[{\rm (i)}] $y$ is a \emph{strong homotopy retract} of $x$ if there exists a retract interval $h\colon E\to R$ and a functor $r\colon R\to \mathcal{C}$ such that $r(0)=x$ and $r(1)=y$;
        \item[{\rm (ii)}] $y$ is a \emph{virtual strong homotopy retract} of $x$ if it is a strong homotopy retract of $x$ in a fibrant replacement $\mathcal{C}^f$ of $\mathcal{C}$ in $\sCat_{\rm fib}$;
        \item[{\rm (iii)}] $y$ is a \emph{homotopy retract} of $x$ if it is a retract of $x$ in $\pi_0(\mathcal{C})$.
    \end{itemize}
\end{defi}

Our goal is to prove that the three notions defined above are, in fact, all equivalent. The arguments we give are similar to the ones used when dealing with intervals in the study of the homotopy theory of enriched categories (cf.~\cite[Lemma 2.10 and Lemma 2.4]{BM13}).

\begin{propo}\label{prop:strongret_equal_virtual}
    Let $\mathcal{C}$ be a simplicial category and let $x$ and $y$ be two objects of~$\mathcal{C}$. Then $y$ is a strong homotopy retract of $x$ if and only if $y$ is a virtual strong homotopy retract of $x$.
\end{propo}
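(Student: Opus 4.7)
The direction \textbf{SHR $\Rightarrow$ virtual SHR} is immediate: given a witness $r\colon R\to \mathcal{C}$, compose with the fibrant replacement map $\mathcal{C}\to \mathcal{C}^f$. Since this is bijective on objects, being a weak equivalence in $\sCat_{\rm fib}$, the composite $R\to \mathcal{C}^f$ still sends $0\mapsto x$ and $1\mapsto y$, providing the virtual witness.

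For the converse, I will follow the strategy of \cite[Lemma 2.10]{BM13}. Given a witness $r'\colon R\to \mathcal{C}^f$ for some retract interval $h\colon E\to R$, first reduce to a generating retract interval: by the generating set property, there exists $g\colon G_0\to G_1$ in a fixed generating set $\mathfrak{G}$ and a commutative diagram exhibiting $h$ as a Reedy retract of a trivial extension $g'\colon G_0'\to G_1'$ of $g$. By two-out-of-three in $(\sCat_{\rm fib}^I)_{\rm Reedy}$, the map $g'$ is itself a retract interval. Composing $r'$ with the retraction $G_1'\to R$ yields a functor $r''\colon G_1'\to \mathcal{C}^f$ preserving the endpoints. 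It then suffices to construct a functor $G_1'\to \mathcal{C}$ sending $0\mapsto x$ and $1\mapsto y$, since this would witness $y$ as an SHR of $x$ in $\mathcal{C}$ via the retract interval $g'$.

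To construct this lift, I exploit that $\mathcal{C}\to \mathcal{C}^f$ is a trivial cofibration in $\sCat_{\rm fib}$ and, being bijective on objects, also a trivial cofibration in the Bergner model structure by Lemma~\ref{lem:injcof}. The codomain $G_1'$ is Bergner cofibrant, since its construction from $\emptyset$ proceeds via cell attachments by generating Bergner cofibrations, and $\mathcal{C}^f$ is Bergner fibrant (a map to the terminal simplicial category is automatically an isofibration, so $\sCat_{\rm fib}$-fibrancy coincides with Bergner fibrancy). Hence $\mathcal{C}^f$ serves as a Bergner fibrant replacement of $\mathcal{C}$, yielding an equivalence of derived mapping spaces and in particular a bijection $[G_1', \mathcal{C}]_{\rm Ho}\cong [G_1', \mathcal{C}^f]_{\rm Ho}$. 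Considering the Kan fibration $\Map(G_1', \mathcal{C}^f)\to \Map(\{0,1\}, \mathcal{C}^f)$ (arising from the Bergner cofibration $\{0,1\}\hookrightarrow G_1'$) and its derived analogue for $\mathcal{C}$, the equivalence of total spaces yields an equivalence of fibers over $(x,y)\in \mathrm{Ob}(\mathcal{C})=\mathrm{Ob}(\mathcal{C}^f)$; the fiber for $\mathcal{C}^f$ is nonempty (it contains $r''$), so the corresponding fiber for $\mathcal{C}$ is also nonempty.

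The hard part will be making this last fiber-comparison step precise, since $\mathcal{C}$ itself is not assumed Bergner fibrant and so $\Map(G_1', \mathcal{C})$ need not be a Kan complex. The workaround is to compute the fiber in $\mathcal{C}$ after appropriate fibrant replacement of the restriction map, using that the inclusion $\{0,1\}\hookrightarrow G_1'$ is a Bergner cofibration and the endpoints already live in $\mathcal{C}$, so the relevant derived-fiber computation does not depend on the non-fibrancy of $\mathcal{C}$. This produces the required functor $G_1'\to \mathcal{C}$ sending the objects as prescribed, completing the argument.
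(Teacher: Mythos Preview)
Your forward direction is fine and matches the paper. The converse, however, has a genuine gap at exactly the point you flag as ``the hard part''.

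You correctly observe that $\mathcal{C}\to\mathcal{C}^f$ is a Bergner trivial cofibration and that $G_1'$ is Bergner cofibrant. But from this you only get an equivalence of \emph{derived} mapping spaces, and an element of the derived fiber over $(x,y)$ is not the same thing as an honest functor $G_1'\to\mathcal{C}$ sending $0\mapsto x$ and $1\mapsto y$ on the nose. Once you ``fibrantly replace the restriction map'' you have changed the total space, so a point in the replaced fiber no longer represents a strict map into $\mathcal{C}$; and since $\mathcal{C}\to\mathcal{C}^f$ is a trivial \emph{cofibration} rather than a trivial fibration, there is no lifting property available to rectify this. Your final paragraph asserts that the workaround ``produces the required functor'' but does not actually produce it. The detour through generating retract intervals is also unnecessary and does not help with this difficulty.

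The paper's argument avoids this problem by never trying to lift the \emph{same} retract interval. Instead it works in the Reedy model structure on $(\sCat_{\rm fib})^{I}$: the witness $r\colon R\to\mathcal{C}^f$ is encoded as a morphism $h\to\id_{\mathcal{C}^f}$ in the arrow category, which is then factored as a trivial cofibration followed by a fibration $h\to h'\twoheadrightarrow\id_{\mathcal{C}^f}$. Pulling this fibration back along the weak equivalence $\id_{\mathcal{C}}\to\id_{\mathcal{C}^f}$ and invoking right properness of $\sCat_{\rm fib}$ yields an arrow $h''$ weakly equivalent to $h'$ (hence to $\iota$), together with a map $h''\to\id_{\mathcal{C}}$. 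A cofibrant replacement $\widetilde{h}\to h''$ then gives a genuine retract interval equipped with a strict map into $\mathcal{C}$ hitting $x$ and $y$. The key idea you are missing is that one is allowed to \emph{change} the retract interval, and the pullback-plus-right-properness manoeuvre in the arrow category is precisely what manufactures a suitable new one.
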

\begin{proof}
The fact that if $y$ is a strong homotopy retract of $x$, then $y$ is a virtual strong homotopy retract of $x$ is straightforward.

Conversely, let $\mathcal{C}^f$ be a fibrant replacement of $\mathcal{C}$ in $\sCat_{\rm fib}$ and suppose that $y$ is a virtual homotopy retract of $x$. Then there exist a retract interval $h\colon {E}\to {R}$ and a map of simplicial categories $r\colon R\to \mathcal{C}^f$ such that $r(0)=x$ and $r(1)=y$.
The functor $r$ determines a unique morphism ${\overline{r}}\colon {h}\to {\id_{\mathcal{C}^f}}$ in $\sCat^I$.  Now we factor $\overline{r}$ into a trivial cofibration followed by a fibration
    \[
    \xymatrix{h \ar@{>->}[r]^-{\sim} &
        h' \ar@{->>}[r]^-{k} &
        \id_{\mathcal{C}^f}.}
    \]
Note that $h'$ is also a retract interval. Consider the following pullback square in $(\sCat_{{\rm fib}}^{I})_{{\rm Reedy}}$
    \[
    \xymatrix{
        h'' \ar[d]_-{\sim} \ar@{->>}[r]^-{k'}&
        \id_{X} \ar[d]^-{\sim}\\
        h' \ar@{->>}[r]_-{k}&
        \id_{X^f}.
    }
    \]
Since the model structure $\sCat_{\rm fib}$ is right proper (see Remark~\ref{rem:sCat_fib_rp}), the model structure $(\sCat_{\rm fib}^{I})_{{\rm Reedy}}$ is also right proper. Therefore, the vertical map on the left is a weak equivalence. In particular, $h''$ is a retract interval.

    Let $\morp{p}{\widetilde{h}}{h''}$ be a cofibrant replacement for $h''$. Then, the map $\widetilde{h}$ is a retract interval and the composition $k'\circ p\colon {\widetilde{h}}\to{\id_\mathcal{C}}$ exhibits $y$ as a strong homotopy retract of $x$.
\end{proof}

\begin{propo}\label{prop:virtual_equal_homret}
    Let $\mathcal{C}$ be a simplicial category and let $x$ and $y$ be two objects of~$\mathcal{C}$.
    Then $y$ is a virtual strong homotopy retract of $x$ if and only if $y$ is a homotopy retract of $x$.
\end{propo}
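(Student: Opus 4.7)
The plan is to treat the two directions separately, using the interplay between the Reedy model structure on $\sCat_{\rm fib}^I$ and the path component functor $\pi_0$.

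For $(\Rightarrow)$, if $y$ is a virtual strong homotopy retract via a retract interval $h \colon E \to R$ and a map $r \colon R \to \mathcal{C}^f$, the zigzag in the definition of retract interval shows $R \simeq \Split$ in $\sCat_{\rm fib}$. Since weak equivalences in $\sCat_{\rm fib}$ are bijective on objects and local, $\pi_0$ turns them into isomorphisms of categories; hence $\pi_0(R) \cong \Split$ and $\pi_0(\mathcal{C}) \cong \pi_0(\mathcal{C}^f)$. The induced functor $\Split \cong \pi_0(R) \to \pi_0(\mathcal{C}^f) \cong \pi_0(\mathcal{C})$ then exhibits $y$ as a retract of $x$.

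For $(\Leftarrow)$, a homotopy retract of $x$ at $y$ amounts to a functor $([r],[i]) \colon \Split \to \pi_0(\mathcal{C}^f)$, and by the surjectivity lemma stated just before the fibered model structure subsection it lifts to some $\tilde r \colon \St(\Ret) \to \mathcal{C}^f$. To package $\tilde r$ as a retract interval map, set $R^\natural := \St(\Ret)$ and let $E^\natural \subseteq R^\natural$ be the full simplicial subcategory on the object sent to $1 \in \Split$; let $h^\natural \colon E^\natural \hookrightarrow R^\natural$ be the inclusion. Since the map $\rho \colon \Ret \to N(\Split)$ is inner anodyne by \cite[Proposition~4.4.5.6]{Lu09}, $\St(\rho)$ is a trivial Bergner cofibration, and being bijective on objects it is a weak equivalence in $\sCat_{\rm fib}$; composing with the object-bijective weak equivalence $\St N(\Split) \to \Split$ from Remark~\ref{rem:CN_retract} yields $R^\natural \simeq \Split$ in $\sCat_{\rm fib}$, and restricting to the hom simplicial set at the retract object gives $E^\natural \simeq \Idem$ by the same chain. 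These equivalences assemble to a zigzag of Reedy weak equivalences from $h^\natural$ to $\iota$ in $(\sCat_{\rm fib}^I)_{\rm Reedy}$. A Reedy cofibrant replacement $h \colon E \to R$ of $h^\natural$ is then an honest retract interval, and the composite $R \to R^\natural \xrightarrow{\tilde r} \mathcal{C}^f$ sends $0 \mapsto x$ and $1 \mapsto y$, witnessing $y$ as a virtual strong homotopy retract of $x$.

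The main obstacle lies in the $(\Leftarrow)$ direction: the candidate map $\St(\Ret) \to \mathcal{C}^f$ supplied by the surjectivity lemma is not itself a retract interval map because $\St(\Ret)$ need not be cofibrant in $\sCat_{\{0,1\}}$ in the strong sense required by the definition. The rescue is the Reedy cofibrant replacement in the arrow category, which works cleanly precisely because weak equivalences in $\sCat_{\rm fib}$ are bijective on objects and therefore automatically preserve the one-object / two-object structure demanded of a retract interval.
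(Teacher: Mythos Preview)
Your $(\Rightarrow)$ direction is fine and matches the paper. In the $(\Leftarrow)$ direction there is a small but genuine error: you take $E^\natural$ to be the full subcategory of $\St(\Ret)$ on the object sent to $1\in\Split$. But in $\Split$ the object $1$ is the \emph{retract}, with trivial endomorphism monoid $\Split(1,1)=\{\id\}$; the idempotent $e=i\circ r$ lives on the object $0$, and the functor $\iota$ sends the unique object of $\Idem$ to $0$. So the full subcategory on the object over $1$ is weakly contractible, not weakly equivalent to $\Idem$, and the resulting arrow $h^\natural$ is not weakly equivalent to $\iota$. The fix is simply to take $E^\natural$ on the object sent to $0$; then restriction of the local weak equivalence $\St(\Ret)\to\Split$ does give $E^\natural\simeq\Idem$ as you intend, and the remainder of your argument (zigzag, Reedy cofibrant replacement) goes through.

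Even with that correction, it is worth comparing your route to the paper's. The paper avoids building a new retract interval out of $\St(\Ret)$ entirely: since $\St(\rho)\colon\St(\Ret)\to\St N(\Split)$ is a trivial cofibration and $\mathcal{C}^f$ is fibrant, the map $\St(\Ret)\to\mathcal{C}^f$ \emph{lifts} to a map $\St N(\Split)\to\mathcal{C}^f$. But $\St N(\iota)\colon\St N(\Idem)\to\St N(\Split)$ is already a retract interval by Remark~\ref{rem:CN_retract}, so the lifted map directly exhibits $y$ as a virtual strong homotopy retract of $x$, with no cofibrant replacement needed. Your approach instead manufactures a retract interval by taking a full subcategory of $\St(\Ret)$ and then Reedy-cofibrantly replacing; this works but requires additional verifications (compatibility of the zigzag with full-subcategory restriction, preservation of the object labels under cofibrant replacement). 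The paper's lifting trick is shorter precisely because it exploits the fibrancy of $\mathcal{C}^f$ a second time rather than paying for a cofibrant replacement in the arrow category.
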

\begin{proof}
    One direction is clear.
    We are going to prove that if $y$ is a homotopy retract of $x$, then $y$ is a virtual strong homotopy retract of $x$. Let $\mathcal{C}^f$ be a fibrant replacement of $\mathcal{C}$ in $\sCat_{\rm fib}$. Note that since $\pi_0(\mathcal{C}^f)\cong \pi_0(\mathcal{C})$, the object $y$ is a homotopy retract of $x$ in $\mathcal{C}^f$ as well. Hence, there exists a map of simplicial categories $r\colon {\St(\Ret)}\to {\mathcal{C}^f}$ such that $r(0)=x$ and $r(1)=y$.

    By \cite[Proposition 4.4.5.6]{Lu09} the map $\Ret\to N(\Split)$ is an inner anodyne map of simplicial sets and thus a trivial cofibration in the Joyal model structure. Hence the map ${\St(\rho)}\colon{\St(\Ret)}\to{\St(N(\Split))}$ is a trivial cofibration in both $\sCat$ and $\sCat_{\rm fib}$, since $\St$ is a left Quillen functor. Therefore, $r$ lifts to a map of simplicial categories ${\tilde{r}}\colon {\St(N(\Split}))\to{\mathcal{C}^f}$. Since $\St(N({\Split}))$ is the target of a retract interval, namely $\St(N(\iota))$, this implies that $y$ is a virtual homotopy retract of $x$.
\end{proof}

\subsection{The Morita model structure}

In this section we prove the existence of the Morita model structure for simplicial categories. We begin by describing the weak equivalences and fibrations of this model structure.

\begin{defi}\label{def:ret_lift}
A map of simplicial categories $f\colon\mathcal{C}\to\mathcal{D}$ is called:
\begin{itemize}
\item[{\rm (i)}] \emph{Retract-lifting} if it has the right lifting property with respect to retract intervals.
\item[{\rm (ii)}] \emph{Homotopically essentially surjective up to retracts} if for every $a$ in $\mathcal{D}$ there exist $x$ in $\mathcal{C}$, a retract interval $h\colon E\to R$, and a map of simplicial categories $q\colon R\to \mathcal{D}$ such that $q(0)=f(x)$ and $q(1)=a$.
\end{itemize}
\end{defi}

\begin{defi}\label{def:Moreq}
    A map of simplicial categories is called a \emph{Morita weak equivalence} if it is homotopically fully faithful and homotopically essentially surjective up to retracts. A map of simplicial categories is called a \emph{Morita fibration} if it is a local fibration and retract-lifting.
\end{defi}

\begin{lemma}\label{lem:tfib}
A map is a Morita trivial fibration (that is, a Morita fibration which is also a Morita weak equivalence) if and only if it is a local trivial fibration which is surjective on objects.
\end{lemma}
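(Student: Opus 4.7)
The plan is to handle the two implications separately.

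For the forward direction, assume $f$ is a local trivial fibration that is surjective on objects. Then $f$ is immediately a local fibration and homotopically fully faithful. Surjectivity on objects gives, for every $a \in \mathcal{D}$, an $x \in \mathcal{C}$ with $f(x) = a$; since $a$ is trivially a retract of $f(x) = a$ in $\pi_0(\mathcal{D})$, Propositions~\ref{prop:strongret_equal_virtual} and~\ref{prop:virtual_equal_homret} exhibit $a$ as a strong homotopy retract of $f(x)$, so $f$ is homotopically essentially surjective up to retracts. To check retract-lifting, I would observe that any retract interval $h\colon E \to R$ is injective on objects and, by condition~(iii) of its definition, is a cofibration in $\sCat_{\rm fib}$. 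Lemma~\ref{lem:injcof} then supplies the left lifting property against every local trivial fibration that is surjective on objects, in particular against $f$.

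For the reverse direction, a Morita trivial fibration $f$ is automatically a local trivial fibration, since the combination of local fibration and homotopically fully faithful forces every hom-map to be a trivial fibration of simplicial sets. The substantive task is to prove surjectivity on objects. Given $a \in \mathcal{D}$, homotopical essential surjectivity up to retracts produces $x \in \mathcal{C}$, a retract interval $h\colon E \to R$, and a map $q\colon R \to \mathcal{D}$ with $q(0) = f(x)$ and $q(1) = a$. My plan is to build a commutative square
\[
\xymatrix{
E \ar[r]^-{\bar{a}} \ar[d]_-{h} & \mathcal{C} \ar[d]^{f} \\
R \ar[r]_-{q} & \mathcal{D}
}
\]
to which retract-lifting of $f$ can be applied, yielding $\tilde{q}\colon R \to \mathcal{C}$ with $f\circ\tilde{q} = q$ and hence $f(\tilde{q}(1)) = a$.

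The only nontrivial step is the construction of $\bar{a}$, and this is where I expect the real content to lie. Restricting $f$ to endomorphism simplicial monoids at $x$ and $f(x)$ yields a map $\mathcal{C}|_x \to \mathcal{D}|_{f(x)}$ which, viewed inside $\sCat_{\{0\}}$ via the identifications $x \mapsto 0$ and $f(x) \mapsto 0$, is both a local fibration and a local weak equivalence, hence a trivial fibration in the Dwyer--Kan model structure on $\sCat_{\{0\}}$. By condition~(i) of the definition of a retract interval, $E$ is cofibrant in $\sCat_{\{0\}}$, so the composite $q\circ h\colon E \to \mathcal{D}|_{f(x)}$ admits a lift $\bar{a}\colon E \to \mathcal{C}|_x \subset \mathcal{C}$ along this trivial fibration, completing the square. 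The main conceptual point of the argument is precisely this matching: the cofibrancy built into clause~(i) of the retract interval definition pairs with the trivial fibration coming from $f$ being local fibration and homotopically fully faithful, which is what allows retract-lifting to upgrade ``essentially surjective up to retracts'' to genuine surjectivity on objects.
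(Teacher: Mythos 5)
Your proposal is correct and follows essentially the same route as the paper: the easy containments are handled via Lemma~\ref{lem:injcof} together with Propositions~\ref{prop:strongret_equal_virtual} and~\ref{prop:virtual_equal_homret}, and surjectivity on objects is obtained exactly as in the paper by first lifting against $E$ through the one-object restriction $\mathcal{C}|_x\to\mathcal{D}|_{f(x)}$ (a trivial fibration in $\sCat_{\{0\}}$ paired with the cofibrancy of $E$ from clause~(i)) and then applying retract-lifting to the resulting square. No gaps.
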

\begin{proof}
By definition, every Morita trivial fibration is a local trivial fibration. So it suffices to prove that a local trivial fibration $f\colon\mathcal{C}\to\mathcal{D}$ is retract-lifting and homotopically essentially surjective up to retracts if and only if it is surjective on objects.

    Suppose that $f$ is retract-lifting and homotopically essentially surjective up to retracts and let $a$ be an object of $\mathcal{D}$. By hypothesis, there exist $x$ in $\mathcal{C}$, a retract interval $h\colon{E}\to{R}$ and a map of simplicial categories ${q}\colon{R}\to\mathcal{D}$ such that $q(0)=f(x)$ and $q(1)=a$.
Consider the following commutative diagram
    \[
    \xymatrix{0
        \ar[r]^-{x}
        \ar@{ >->}[d]_-{0} &
        \mathcal{C}\ar@{->>}[dd]^-{f}^-{}\\
        E \ar@{.>}[ur]^-{\tilde{x}}
        \ar[d]_-{h} & \\
        R \ar[r]_-{q} \ar@{.>}[uur]^-{p} & \mathcal{D}.
    }
    \]
We can decompose the map $x$ as a composition of a functor which is bijective on objects followed by a fully faithful one, and the same for the map $q\circ h$. Thus, we get a commutative diagram
$$
\xymatrix{
0\ar[r] \ar[d] & x^*\mathcal{C} \ar[d]\ar[r] & \mathcal{C} \ar[d] \\
E \ar[r]\ar@{.>}[ur]^{\tilde{x}} & h^*q^*\mathcal{D} \ar[r] & \mathcal{D}.
}
$$
The vertical map in the middle is a trivial fibration in the category of simplicial categories with one object and the map $0\to E$ is a cofibration which is bijective on objects. Therefore, $x$ lifts to a map $\tilde{x}$. Furthermore, since $f$ is retract-lifting, there exists a map $p$ that makes the first diagram commute. Clearly $f(p(1))=q(1)=a$. This proves that $f$ is surjective on objects.

Conversely, suppose that $f$ is surjective on objects. Then $f$ is clearly homotopically essentially surjective up to retracts, and it is also retract\nobreakdash-lifting by Lemma~\ref{lem:injcof}.  			
\end{proof}

\begin{rem}
The previous lemma shows that the class of trivial fibrations for the Morita model structure on simplicial categories is the same as the class of trivial fibrations for the Bergner model structure. In fact, as we will see in Section~\ref{sect:loc_scat}, the Morita model structure can be obtained as a left Bousfield localization of the Bergner model structure.
\end{rem}

Following Dwyer--Kan~\cite{DK87} a map $f$ of simplicial categories is called a \emph{weak $r$-equivalence} if $f$ is homotopically fully faithful and $\pi_0(f)$ is essentially surjective up to retracts.

Let $f\colon \mathcal{C}\to \mathcal{D}$ be a map of simplicial categories. Then there is a Quillen pair
$$
f_!:\ssets^{\mathcal{C}}\myrightleftarrows{\rule{0.4cm}{0cm}} \ssets^{\mathcal{D}}:f^*
$$
between the corresponding categories of simplicial functors with the projective model structure. Dwyer and Kan characterized the maps for which the previous adjunction is a Quillen equivalence; see~\cite[Theorem 2.1]{DK87} and \cite[IX. Theorem 2.13]{GJ99}.

\begin{theo}[Dwyer--Kan]\label{thm:DK}
Let  $f\colon \mathcal{C}\to \mathcal{D}$ be a map of simplicial categories. Then
$f_!:\ssets^{\mathcal{C}}\rightleftarrows \ssets^{\mathcal{D}}:f^*$ is a Quillen equivalence if and only if $f$ is a weak $r$-equivalence.
\end{theo}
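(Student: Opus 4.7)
The plan is to decompose the Quillen equivalence condition into its two standard pieces and match each with a component of the weak $r$-equivalence condition. Since projective weak equivalences and fibrations in $\ssets^{\mathcal{D}}$ are detected objectwise and $f^*$ is restriction along $f$, the functor $f^*$ preserves all weak equivalences, all fibrations, and all fibrant objects; in particular the derived unit at a cofibrant $X$ is represented by the strict unit $X \to f^*(f_!X)$. Quillen equivalence is thus equivalent to the conjunction of: (I) $X \to f^*(f_!X)$ is a weak equivalence for every projectively cofibrant $X$, and (II) $f^*$ reflects weak equivalences between projectively fibrant objects.

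For (I), every projectively cofibrant $X$ is a retract of a transfinite composition of pushouts of maps of the form $\partial\Delta[n]\otimes\mathcal{C}(c,-)\hookrightarrow\Delta[n]\otimes\mathcal{C}(c,-)$, and $f_!$ preserves colimits and cofibrations; a cellular induction therefore reduces (I) to $X = \mathcal{C}(c,-)$. The Yoneda identity $f_!\mathcal{C}(c,-)\cong\mathcal{D}(f(c),-)$ specializes the unit, pointwise at $c'$, to the canonical map $\mathcal{C}(c,c')\to\mathcal{D}(f(c),f(c'))$, and requiring these to be weak equivalences for all $c,c'$ is precisely homotopical full-faithfulness of $f$.

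For the easier direction of (II), assume $\pi_0(f)$ is essentially surjective up to retracts. Given $d\in\Ob{\mathcal{D}}$, choose $c\in\Ob{\mathcal{C}}$ and classes $[i]\in\pi_0\mathcal{D}(d,f(c))$, $[r]\in\pi_0\mathcal{D}(f(c),d)$ with $[r]\circ[i]=[\id_d]$. For any map $Y\to Y'$ between projectively fibrant objects, these classes exhibit the map $Y(d)\to Y'(d)$ as a retract, in the arrow category of $\mathrm{Ho}(\ssets)$, of $Y(f(c))\to Y'(f(c))$; since weak equivalences between Kan complexes are closed under retracts in the arrow category, reflection at image objects propagates to reflection at $d$, proving (II).

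The main obstacle is the converse of (II): extracting essential surjectivity up to retracts from the Quillen equivalence hypothesis. My plan is to apply derived essential surjectivity of $Lf_!$ to the fibrant replacement of the representable $\mathcal{D}(d,-)$, producing a cofibrant $X$ with $f_!(X)\xrightarrow{\simeq}\mathcal{D}(d,-)^{\mathrm{fib}}$, and then pass to $\pi_0$: the resulting adjunction $\pi_0(f)_!\dashv\pi_0(f)^*$ between ordinary presheaf categories must itself be an equivalence, so the classical characterization recalled before Theorem~\ref{thm:Morita_cat} forces $\pi_0(f)$ to be essentially surjective up to retracts. The delicate point is justifying that the homotopical equivalence descends cleanly to the $\pi_0$-level categorical equivalence; this should follow from the facts that $\pi_0$ applied objectwise sends weak equivalences of fibrant simplicial presheaves to bijections and that $\pi_0$ is compatible with $f_!$ and $f^*$ on representables, but packaging these compatibilities is where I expect the technical work to concentrate.
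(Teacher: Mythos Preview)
The paper does not prove this theorem itself; it is attributed to Dwyer--Kan with references to \cite{DK87} and \cite{GJ99}. The relevant argument is, however, partially reproduced in the proof of Proposition~\ref{prop:reqalg}, so one can compare against that.

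Your decomposition into (I) and (II) is the standard criterion (Hovey, Corollary~1.3.16), and your treatment of (I) is correct: both the identity and $f^*f_!$ are cocontinuous simplicial functors sending generating projective cofibrations to levelwise injections, so the gluing lemma carries the cellular induction, and at $X=h_c$ the strict unit is exactly $\mathcal{C}(c,-)\to\mathcal{D}(f(c),f(-))$. The retract argument for the easy direction of (II) is also fine.

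The genuine gap is in your plan for the converse of (II). There is no reason the Quillen equivalence between the simplicial diagram categories descends to an \emph{equivalence of categories} $\pi_0(f)_!:\sets^{\pi_0\mathcal{C}}\rightleftarrows\sets^{\pi_0\mathcal{D}}:\pi_0(f)^*$. What you obtain is an equivalence of \emph{homotopy} categories, and although $\sets^{\pi_0\mathcal{C}}$ embeds fully faithfully in $\mathrm{Ho}(\ssets^{\mathcal{C}})$ as the discrete objects, the derived functor $Lf_!$ need not preserve discreteness up to weak equivalence, so the equivalence does not restrict. An abstract cofibrant $X$ with $f_!X\simeq h_d^{\mathrm{fib}}$ carries no usable description of the $0$-simplices of $f_!X(d)$, which is exactly what you would need to produce the retraction.

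The Dwyer--Kan argument (visible in the proof of Proposition~\ref{prop:reqalg}) sidesteps this by using the explicit bar-construction model $\tilde{f}_!$ for the derived left Kan extension (Proposition~\ref{prop:deform}). The derived counit at $h_d=\mathcal{D}(d,-)$, evaluated at $d$, is a weak equivalence
\[
\tilde{f}_! f^* h_d(d)\longrightarrow \mathcal{D}(d,d),
\]
and a $0$-simplex of the source is \emph{by construction} a triple $(c_0,\ j\in\mathcal{D}(d,f(c_0))_0,\ q\in\mathcal{D}(f(c_0),d)_0)$ which the map sends to $q\circ j$. Surjectivity on $\pi_0$ then yields such a triple landing in the component of $\id_d$, i.e.\ a witness that $d$ is a retract of $f(c_0)$ in $\pi_0(\mathcal{D})$. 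Replacing your unspecified cofibrant replacement by this explicit one is the missing ingredient.
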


\begin{lemma}\label{lem:Morita_r-equiv}
A map $f$ of simplicial categories is homotopically essentially surjective up to retracts if and only of $\pi_0(f)$ is essentially surjective up to retracts. In particular, a map of simplicial categories is a Morita weak equivalence if and only if it is a weak $r$-equivalence.
\end{lemma}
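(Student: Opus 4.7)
The plan is to unwind the definitions and then invoke Propositions~\ref{prop:strongret_equal_virtual} and~\ref{prop:virtual_equal_homret}, which together identify the three notions of ``retract'' for objects of a simplicial category.

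First I would observe that the condition that $f$ be homotopically essentially surjective up to retracts says precisely that every object $a$ of $\mathcal{D}$ is a \emph{strong homotopy retract} of some $f(x)$, in the sense of Definition~2.10(i): one unpacks Definition~\ref{def:ret_lift}(ii) and simply renames the data $(h\colon E\to R,\ q\colon R\to \mathcal{D})$ with $q(0)=f(x)$, $q(1)=a$. Dually, the condition that $\pi_0(f)$ be essentially surjective up to retracts (in the sense of Section~\ref{subsect:Morita_cat}) says exactly that every object $a$ of $\pi_0(\mathcal{D})=\mathbf{ob}(\mathcal{D})$ is a retract of some $\pi_0(f)(x)=f(x)$ in $\pi_0(\mathcal{D})$, i.e., $a$ is a \emph{(weak) homotopy retract} of $f(x)$ in the sense of Definition~2.10(iii).

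Now I would conclude the first assertion by chaining together Propositions~\ref{prop:strongret_equal_virtual} and~\ref{prop:virtual_equal_homret}, applied to the simplicial category $\mathcal{D}$ and the pair of objects $f(x),a$: the equivalence ``strong homotopy retract'' $\Leftrightarrow$ ``virtual strong homotopy retract'' $\Leftrightarrow$ ``homotopy retract'' is exactly what translates the homotopical essential surjectivity condition on $f$ into essential surjectivity up to retracts on $\pi_0(f)$. Note that this equivalence holds for any $\mathcal{D}$ (not only fibrant ones), which is why the ``virtual'' notion appears in the middle.

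For the ``in particular'' clause, I would just assemble the pieces: by Definition~\ref{def:Moreq}, $f$ is a Morita weak equivalence if and only if it is homotopically fully faithful and homotopically essentially surjective up to retracts. By what we have just shown, the second condition is equivalent to $\pi_0(f)$ being essentially surjective up to retracts, and together with homotopical full faithfulness this is exactly the Dwyer--Kan definition of weak $r$-equivalence recalled before Theorem~\ref{thm:DK}. No step here is really an obstacle; the only delicate point is making sure the correct direction of each of Propositions~\ref{prop:strongret_equal_virtual} and~\ref{prop:virtual_equal_homret} is cited, and this is taken care of by the fact that both propositions are stated as biconditionals.
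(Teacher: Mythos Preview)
Your proposal is correct and follows essentially the same route as the paper: both reduce the equivalence to the chain ``strong homotopy retract $\Leftrightarrow$ virtual strong homotopy retract $\Leftrightarrow$ (weak) homotopy retract'' established in Propositions~\ref{prop:strongret_equal_virtual} and~\ref{prop:virtual_equal_homret}. Your write-up is in fact slightly more careful than the paper's, which invokes the two propositions only for the implication ``homotopically essentially surjective up to retracts $\Rightarrow$ $\pi_0(f)$ essentially surjective up to retracts'' and declares the converse ``obvious''---whereas it is the converse that genuinely requires the nontrivial directions of those propositions; your explicit appeal to both biconditionals handles this cleanly.
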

\begin{proof}
Suppose that $f\colon\mathcal{C}\to \mathcal{D}$ is homotopically essentially surjective up to retracts. Let $y$ be an object of $\mathcal{D}$. By assumption there exists an object $x$ in $\mathcal{C}$ such that $y$ is a strong homotopy retract of $f(x)$.
By Proposition~\ref{prop:strongret_equal_virtual} and Proposition~\ref{prop:virtual_equal_homret} this implies that $y$ is a homotopy retract of $f(x)$. It follows that $\pi_0(f)$ is essentially surjective up to retracts.
The converse is obvious.
\end{proof}

Recall that given a class $\mathcal{I}$ of morphisms in a category with all small colimits we denote by:
\begin{itemize}
\item[{\rm (i)}] $\mathcal{I}$-inj the class of morphisms with the right lifting property with respect to every element in $\mathcal{I}$.
\item[{\rm (ii)}] $\mathcal{I}$-cell the class of morphisms obtained as a transfinite composition of push\-outs of elements in $\mathcal{I}$.
\end{itemize}

\begin{theo}\label{theo:moritamodelcat}
    There is a cofibrantly generated model structure on $\sCat$, called the Morita model structure, and denoted by $\sCat_{\Morita}$ such that:
\begin{itemize}
        \item[{\rm (i)}] The weak equivalences are the Morita weak equivalences.
        \item[{\rm (ii)}] The fibrations are the Morita fibrations.
        \item[{\rm (iii)}] The cofibrations are the cofibrations of $\sCat$ with the Bergner model structure.
\end{itemize}
A set $\mathcal{J}_{\Morita}$ of generating trivial cofibrations consists of a generating set of retract intervals $\mathcal{G}$ together with the maps $\Ug(\Lambda[n, k])\to \Ug(\Delta[n])$ for $n\ge 1$ and $0\le k\le n$.
As a set of generating cofibrations we can take the set of generating cofibrations of the Bergner model structure.
\end{theo}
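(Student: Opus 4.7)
The plan is to apply Kan's recognition principle for cofibrantly generated model categories (for instance \cite[Theorem 11.3.1]{Hi03}) with $\mathcal{I}$ the generating cofibrations of the Bergner model structure and $\mathcal{J}_{\rm Morita}$ as specified. Four ingredients must be verified: that the class $W$ of Morita weak equivalences satisfies two-out-of-three and retract-closure; that the domains of $\mathcal{I}$ and $\mathcal{J}_{\rm Morita}$ are small; that the $\mathcal{I}$-injectives and $\mathcal{J}_{\rm Morita}$-injectives coincide with the proposed trivial fibrations and fibrations; and that $\mathcal{J}_{\rm Morita}\text{-cell} \subseteq \mathcal{I}\text{-cof} \cap W$.

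The first three ingredients are essentially already in place. By Lemma~\ref{lem:Morita_r-equiv}, the Morita weak equivalences coincide with the Dwyer--Kan weak $r$\nobreakdash-equivalences, so two-out-of-three and retract-closure follow from the corresponding properties for local weak equivalences and for essential surjectivity up to retracts in $\pi_0$. Smallness of the domains of $\mathcal{I}$ is inherited from the Bergner model structure, and the generating set $\mathfrak{G}$ of retract intervals can be chosen so that its domains are small as well. For the injective classes, Lemma~\ref{lem:tfib} identifies $\mathcal{I}\text{-inj}$ with the class of Morita trivial fibrations, while $\mathcal{J}_{\rm Morita}\text{-inj}$ consists of local fibrations (from the horn inclusions) with the right lifting property against every element of $\mathfrak{G}$; by Remark~\ref{rem:rlp_interval} this is precisely the class of Morita fibrations. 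Hence $\mathcal{I}\text{-inj} = W \cap \mathcal{J}_{\rm Morita}\text{-inj}$, as required.

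The main obstacle is the acyclicity condition $\mathcal{J}_{\rm Morita}\text{-cell} \subseteq \mathcal{I}\text{-cof} \cap W$. The containment in $\mathcal{I}\text{-cof}$ is routine: the horn inclusions $\Ug(\Lambda[n,k])\to \Ug(\Delta[n])$ are Bergner trivial cofibrations, each retract interval $h\colon E\to R$ is a cofibration in $\sCat$ by its defining conditions combined with Lemma~\ref{lem:injcof}, and cofibrations are stable under pushout and transfinite composition. The substantive part is the containment in $W$. Pushouts of the horn inclusions give Bergner trivial cofibrations, hence Morita weak equivalences. A pushout of a retract interval $h\colon E\to R$ along a map $E\to \mathcal{C}$ adjoins a new object which is by construction a strong homotopy retract of an existing one; the Reedy cofibrancy of $h$ combined with the zigzag comparison to $\iota^f$ in the definition of a retract interval forces the pushout to be homotopically fully faithful, and Propositions~\ref{prop:strongret_equal_virtual} and~\ref{prop:virtual_equal_homret} translate the strong homotopy retract into a genuine homotopy retract in $\pi_0$. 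Both properties pass to transfinite composition, since mapping spaces and path component categories commute with the relevant filtered colimits along Bergner cofibrations. Assembling these pieces, Kan's theorem produces the model structure and the injective characterisations above identify the Morita fibrations and cofibrations as claimed.
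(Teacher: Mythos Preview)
Your overall strategy---Kan's recognition principle with the same $\mathcal{I}$ and $\mathcal{J}_{\rm Morita}$---is exactly the paper's, and your treatment of two-out-of-three, smallness, and the identification of $\mathcal{I}\text{-inj}$ and $\mathcal{J}_{\rm Morita}\text{-inj}$ via Lemma~\ref{lem:tfib} and Remark~\ref{rem:rlp_interval} matches the paper's argument closely.

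The gap is in the acyclicity step, specifically the claim that the pushout of a retract interval $h\colon E\to R$ along $a\colon E\to\mathcal{C}$ is homotopically fully faithful. Your sentence ``the Reedy cofibrancy of $h$ combined with the zigzag comparison to $\iota^f$ \ldots\ forces the pushout to be homotopically fully faithful'' is not an argument: Reedy cofibrancy and the zigzag tell you about $h$ itself, not about an arbitrary pushout of $h$. The map $h$ is \emph{not} a Bergner weak equivalence (it is not essentially surjective), so neither left properness of the Bergner structure nor a direct comparison with $\iota$ gives you control over hom-spaces after the pushout.

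The paper's proof supplies the missing mechanism. One factors $h$ as the composite $E \xrightarrow{h^u} h^*(R) \xrightarrow{c} R$, where $h^u$ is bijective on objects and $c$ is fully faithful and injective on objects. The pushout of $c$ is again fully faithful (a general fact about such functors). The pushout of $h^u$ is a local weak equivalence because $h^u$ is a \emph{trivial cofibration in the fibered model structure} $\sCat_{\rm fib}$: it is a local weak equivalence (since $E(0,0)\to R(0,0)$ is one, by comparison with $\iota$) and a fibered cofibration (this uses that $h_u\colon h_!E\to R$ is a cofibration in $\sCat_{\{0,1\}}$, and one invokes \cite[Theorem~7.13]{Mu14} to deduce the needed cofibrancy at the level of endomorphism monoids). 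Without this factorisation and the passage through $\sCat_{\rm fib}$, the local-weak-equivalence claim is unsupported.
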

\begin{proof}
To prove the existence of the model structure we are going to use Kan's recognition principle for cofibrantly generated model categories; see \cite[Theorem 2.1.19]{Ho99} and \cite[Theorem 11.3.1]{Hi03}. It then suffices to check the following conditions:
    \begin{enumerate}
        \item\label{cond:modst1} The class of Morita weak equivalences $\mathcal{W}_{\Morita}$ has the two out of three property and is closed under retracts.
        \item\label{cond:modst2} $\mathcal{I}$ and $\mathcal{J}_{\Morita}$ admit the small object argument, where $\mathcal{I}$ denotes a generating set of cofibrations of $\sCat$.
        \item\label{cond:modst3} $\inj{\mathcal{J}_{\Morita}}\cap \mathcal{W}_{\Morita} = \inj{\mathcal{I}}$.
        \item\label{cond:modst4} $\cell{\mathcal{J}_{\Morita}}\subset  \mathcal{W}_{\Morita}$.
    \end{enumerate}
    Condition (\ref{cond:modst1}) is clearly satisfied, by Lemma~\ref{lem:Morita_r-equiv}. Condition (\ref{cond:modst2}) follows from the fact that the category of simplicial categories is locally presentable, and condition~(\ref{cond:modst3}) is Lemma~\ref{lem:tfib}, since the class $\inj{\mathcal{J}_{\Morita}}$ coincides with the class of Morita fibrations, by Remark~\ref{rem:rlp_interval}.

    The class of Morita weak equivalences is closed under transfinite composition. Therefore, to check condition (\ref{cond:modst4}) we only need to show that pushouts of maps in $\mathcal{J}_{\Morita}$ are Morita weak equivalences. All maps of the form $\Ug(\Lambda[n, k])\to \Ug(\Delta[n])$ for $n\ge 1$ and $0\le k\le n$ are trivial cofibrations in the fibered model structure, hence their pushouts are fibered weak equivalences and, in particular, Morita weak equivalences.

Suppose now that $h\colon E\to R$ is a retract interval in $\mathcal{G}$ and consider a pushout square in $\sCat$
    \begin{equation}\label{eq:posquare}
    \xymatrix{ E \ar[r]^-{a}
                 \ar[d]_-{h} &
               \mathcal{C} \ar[d]^-{k} \\
               R \ar[r]_-{b} &
               \mathcal{D}.
        }
    \end{equation}
The only object of $\mathcal{D}$ which is not in the image of $k$ is $b(1)$, which is a strong homotopy retract of $b(0)$ by construction. Hence $k$ is homotopically essentially surjective up to retracts.

It remains to check that $k$ is a local weak equivalence. The square (\ref{eq:posquare}) can be decomposed into two pushout squares
    \[
    \xymatrix{ E \ar[r]^-{a}
               \ar[d]_-{h^u} &
               \mathcal{C} \ar[d]^-{k'} \\
               h^*(R) \ar[r]^-{}
               \ar[d]_-{c} &
               \mathcal{C}' \ar[d]^-{k''} \\
               R \ar[r]_-{b} &
               \mathcal{D}.
    }
    \]
    The map of simplicial categories $c$ is fully faithful and injective on objects. It follows that $k''$ is fully faithful and injective on objects too, and hence, in particular, a local weak equivalence. In the top square the map $\trf{h}$ is a trivial cofibration in the fibered model structure, since $E(0,0)\to h^*(R)(0,0)$ is the same as the map $h_!E(0,0)=E(0,0)\to R(0,0)$ which is a cofibration by \cite[Theorem~7.13]{Mu14}. Therefore $k'$ is a fibered weak equivalence and, in particular, a local weak equivalence. It follows by the two out of three property for weak equivalences that $k$ is a local weak equivalence too.
\end{proof}

\begin{rem}
The Morita model structure for categories described in Theorem~\ref{thm:Morita_cat} can be also constructed by using retract intervals in $\Cat$. The proof is essentially the same as for simplicial categories. The only difference is that, since every object of $\Cat$ is fibrant, we can take a set of generating retract intervals consisting of just one object (cf. \cite[Lemma 2.1]{BM13}). Thus, for $\Cat_{\Morita}$ we can take $\iota\colon \Idem\to\Split$ as a sole generating trivial cofibration. The set of generating cofibrations is the same as the one for the canonical model structure. The fact that the model structure obtained this way coincides with the one of Theorem~\ref{thm:Morita_cat} is now easy to check using~\cite[Proposition E.1.10]{Joy08}, since they both have the same cofibrations and fibrant objects.
\end{rem}

\subsection{The Morita model structure as a localized model structure}\label{sect:loc_scat}

In this section we are going to show that the model structure $\sCat_{\Morita}$ can be obtained as a suitable localization of the Bergner model structure on simplicial categories. More precisely,
we will show that $\sCat_{\Morita}$ is the left Bousfield localization of $\sCat$ with respect to the map $\iota\colon \Idem\to \Split$. Since the cofibrations in $\sCat_{\Morita}$ and in $\sCat$ coincide, it is enough to prove that the $\iota$-local objects coincide with the fibrant objects of the Morita model structure.
\begin{propo}\label{prop:iota_fibrant}
Let $\mathcal{C}$ be a simplicial category. The following are equivalent:
\begin{itemize}
\item[{\rm (i)}] $\mathcal{C}$ is $\iota$-local.
\item[{\rm (ii)}] $\mathcal{C}$ is fibrant in the Morita model structure.
\item[{\rm (iii)}] $\mathcal{C}$ is locally fibrant and has the right lifting property with respect to $\St\hcN(\iota)$.
\end{itemize}
\end{propo}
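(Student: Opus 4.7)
The plan is to establish the cycle of implications $(ii) \Rightarrow (iii) \Rightarrow (i) \Rightarrow (ii)$, of which $(ii) \Rightarrow (iii)$ is immediate: by Remark~\ref{rem:CN_retract}, $\St\hcN(\iota)$ is itself a retract interval, so any Morita-fibrant object is automatically locally fibrant and has the right lifting property against it.

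For $(iii) \Rightarrow (i)$, I would pass through the Quillen equivalence $\St \dashv \hcN$. Local fibrancy makes $\hcN(\mathcal{C})$ a quasi-category, and adjunction turns the right lifting property of $\mathcal{C}$ against $\St\hcN(\iota)$ into the right lifting property of $\hcN(\mathcal{C})$ against $\hcN(\iota)$, namely that every map $\hcN(\Idem) \to \hcN(\mathcal{C})$ extends along $\hcN(\iota)$. For a quasi-category, this is the condition of idempotent completeness in the sense of Lurie, which in turn is equivalent to $\hcN(\iota)$-locality in the Joyal model structure (see \cite[\S 4.4.5 and \S 5.1.4]{Lu09}). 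Transporting $\hcN(\iota)$-locality of $\hcN(\mathcal{C})$ back through the Quillen equivalence then yields the $\iota$-locality of $\mathcal{C}$.

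For $(i) \Rightarrow (ii)$, $\iota$-locality forces Bergner-fibrancy, and in particular local fibrancy; it remains to show that $\mathcal{C}$ has the right lifting property against an arbitrary retract interval $h \colon E \to R$. Combining Lemma~\ref{lem:injcof} with the conditions defining a retract interval shows that $h$ is a cofibration between cofibrant objects in the Bergner model structure, and the zigzag of weak equivalences relating $h$ to $\iota$ through $\iota^f$ in $(\sCat_{\rm fib}^I)_{\rm Reedy}$ consists of weak equivalences in the arrow category of the Bergner model structure, since every fibered weak equivalence is a Bergner weak equivalence. Thus $h$ is a cofibrant replacement of $\iota$, and a simplicial framing of $\mathcal{C}$ as in \cite[Chapter~17]{Hi03} allows the fibration of Kan complexes $\Map^h(R, \mathcal{C}) \to \Map^h(E, \mathcal{C})$ to model the derived mapping space map $\Map^h(\Split, \mathcal{C}) \to \Map^h(\Idem, \mathcal{C})$. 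By $\iota$-locality this is a weak equivalence, hence a trivial Kan fibration, and surjectivity on $0$-simplices produces the required extension of any $a \colon E \to \mathcal{C}$ to a functor $R \to \mathcal{C}$.

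The principal difficulty lies in $(i) \Rightarrow (ii)$: converting the abstract weak equivalence of derived mapping spaces provided by $\iota$-locality into strict surjectivity on vertices, so as to produce an actual lift. This will require selecting the framing (or simplicial resolution) so that $0$-simplices of the derived mapping space genuinely represent simplicial functors $R \to \mathcal{C}$ and $E \to \mathcal{C}$.
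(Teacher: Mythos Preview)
Your proposal is correct, and for $(ii)\Rightarrow(iii)$ and $(i)\Rightarrow(ii)$ it is essentially the paper's argument. In particular, the ``principal difficulty'' you flag at the end is precisely what \cite[Corollary~17.7.5(2)]{Hi03} handles: when $h$ is a cofibration between cofibrant objects and $\mathcal{C}$ is $h$-local, the induced map of homotopy function complexes is an honest trivial fibration of simplicial sets (not merely a weak equivalence), so surjectivity on $0$-simplices is automatic. The paper simply cites that result; you are rederiving it.

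Where you genuinely diverge is in linking (iii) to the other two conditions. You prove $(iii)\Rightarrow(i)$ by transporting through the Quillen equivalence $\St\dashv\hcN$: you translate the lifting property into idempotent completeness of the quasi-category $\hcN(\mathcal{C})$, invoke Lurie's characterization of idempotent completeness as $N(\iota)$-locality, and then pull $N(\iota)$-locality of $\hcN(\mathcal{C})$ back to $\iota$-locality of $\mathcal{C}$ via the derived mapping space comparison. The paper instead proves $(iii)\Rightarrow(ii)$ directly by a purely model-categorical lemma, \cite[Proposition~A.2.3.1]{Lu09}: if two cofibrations between cofibrant objects are weakly equivalent, then a fibrant object has the right lifting property against one if and only if it has it against the other. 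Since every retract interval $h$ is weakly equivalent to $\St\hcN(\iota)$ and both are cofibrations between cofibrant objects, the lifting property against $\St\hcN(\iota)$ immediately propagates to all retract intervals. This is shorter and keeps the argument entirely inside $\sCat$; your route imports the quasi-categorical theory of idempotents, which is heavier machinery but makes the connection to Lurie's notion of idempotent completeness explicit.
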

\begin{proof}
We first prove that (i) and (ii) are equivalent. Note that a fibrant object in the Morita model structure is $\iota$-local because $\iota$ is a Morita weak equivalence (the Morita model structure is the left Bousfield localization of the Bergner model structure with respect to the class of Morita weak equivalences).

Conversely, let $\mathcal{C}$ be an $\iota$-local object and let $h$ be a retract interval. We have to prove that $\mathcal{C}$ has the right lifting property with respect to $h$. Note that $\mathcal{C}$ is $h$-local, since $h$ is weakly equivalent to $\iota$, for every retract interval $h$. Since $h$ is a cofibration between cofibrant objects in $\sCat$, it follows that $\mathcal{C}$ has the right lifting property with respect to $h$, by~\cite[Proposition 17.7.5(2)]{Hi03}.

To prove the equivalence between (ii) and (iii) observe first that $\St\hcN(\iota)$ is a retract interval, therefore (ii) implies (iii).

Conversely, we have to show that if $\mathcal{C}$ has the right lifting property with respect to $\St\hcN(\iota)$, then it has the right lifting property with respect to any retract interval~$h$. This follows by \cite[Proposition A.2.3.1]{Lu09}, since $h$ is weakly equivalent to $\St\hcN(\iota)$ and both are cofibrations between cofibrant objects.
\end{proof}

\begin{coro}\label{cor:morita_scat_loc}
The model structure $\sCat_{\Morita}$ is equal to the left Bousfield localization $L_{\iota}\sCat$ of the Bergner model structure on $\sCat$ with respect to the map $\iota\colon \Idem\to\Split$. In particular,  since $\sCat$ is left proper, $\sCat_{\Morita}$ is also left proper.
\end{coro}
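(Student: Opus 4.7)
The plan is straightforward given Proposition~\ref{prop:iota_fibrant}: I would verify that $\sCat_{\mathrm{Morita}}$ and $L_\iota\sCat$ share both the same cofibrations and the same fibrant objects, and then conclude via the standard uniqueness principle that two model structures on a fixed underlying category agreeing on those two pieces of data must coincide.

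Concretely, $L_\iota\sCat$ exists by Hirschhorn's left Bousfield localization theorem (\cite[Theorem~4.1.1]{Hi03}), since the Bergner model structure is left proper and combinatorial. By construction, its cofibrations are the Bergner cofibrations and its fibrant objects are the $\iota$-local fibrant objects of $\sCat$. On the other side, Theorem~\ref{theo:moritamodelcat}(iii) tells us $\sCat_{\mathrm{Morita}}$ also has the Bergner cofibrations, and the equivalence of conditions (i) and (ii) in Proposition~\ref{prop:iota_fibrant} identifies its fibrant objects with the $\iota$-local fibrant objects. So both classes agree.

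I would then invoke the classical uniqueness principle (in the spirit of Joyal--Tierney): on a fixed underlying category, a model structure is determined by its class of cofibrations together with its class of fibrant objects. The reason is that the trivial fibrations in either structure are forced to agree as the common right lifting class of the cofibrations, and a map is a weak equivalence iff, after factoring as a cofibration followed by a trivial fibration, the cofibration factor lifts against every fibration into a fibrant object — a condition depending only on the shared data. Applying this to our situation gives $\sCat_{\mathrm{Morita}} = L_\iota\sCat$.

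Finally, left properness of $\sCat_{\mathrm{Morita}}$ is automatic: Hirschhorn's theorem guarantees that the left Bousfield localization of a left proper model category is again left proper, so this property transports to $\sCat_{\mathrm{Morita}}$ via the identification. The only conceptually delicate step is the uniqueness principle, but it is standard and requires no further homotopy-theoretic input; all the genuinely new work — matching the fibrant objects of the two candidate structures — has already been carried out in Proposition~\ref{prop:iota_fibrant}.
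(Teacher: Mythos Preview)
Your proposal is correct and follows essentially the same approach as the paper: both argue that $\sCat_{\mathrm{Morita}}$ and $L_\iota\sCat$ share the Bergner cofibrations and, by Proposition~\ref{prop:iota_fibrant}, the same fibrant objects, then appeal to the principle that a model structure is determined by its cofibrations and fibrant objects. The paper leaves this corollary without explicit proof, having set up precisely this strategy in the paragraph preceding Proposition~\ref{prop:iota_fibrant}; your version simply spells out the details (existence of the localization, the Joyal--Tierney uniqueness principle, and left properness via Hirschhorn) more fully.
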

\section{The Morita model structure for quasicategories}\label{sect:morita_qcat}
In this section, we introduce the Morita model structure for quasicategories. The existence of this model structure was first stated by Joyal in~\cite[Ch.16]{Joy08b}.

Recall that $\iota\colon \Idem\to\Split$ is the functor defined in Section~\ref{subsect:Morita_cat} and that $N\colon \Cat\to \ssets$ denotes the nerve functor.

\begin{defi}\label{def:morita_qcat}
The \emph{Morita model structure for quasicategories} $\ssets_{\Morita}$ is the left Bousfield localization of the Joyal model structure on simplicial sets with respect to the map $N(\iota)\colon N(\Idem)\to N(\Split)$.
\end{defi}

We give a characterization of the $N(\iota)$-local equivalences in Corollary~\ref{cor:char_moritaeq_ssets}. The fibrant objects of $\ssets_{\Morita}$, that is, the $N(\iota)$-local objects, can be characterized as follows:

\begin{propo}\label{propo:char_local_sset_morita}
A simplicial set $X$ is $N(\iota)$-local if and only if it is a quasicategory with the right lifting property with respect to $N(\iota)$.
\end{propo}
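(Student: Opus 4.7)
The plan is to prove the two implications separately. The forward direction is a direct application of general localization theory, while the reverse direction requires transferring the lifting property across the Quillen equivalence $(\St, \hcN)$ in order to use the characterization of fibrant objects in $\sCat_{\Morita}$ established in Proposition~2.25 and Corollary~2.26.

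For the forward direction, suppose $X$ is $N(\iota)$-local. Then $X$ is in particular Joyal fibrant, hence a quasicategory, because $\ssets_{\Morita}$ is by definition a left Bousfield localization of the Joyal model structure. The map $\iota$ is injective on objects and morphisms, so $N(\iota)$ is a monomorphism, hence a cofibration between cofibrant objects in the Joyal model structure (where every object is cofibrant). Therefore Hirschhorn~\cite[Cor.~17.7.5(2)]{Hi03} — the very tool already used in the proof of Theorem~\ref{thm:Morita_cat} — implies that $X$ has the right lifting property with respect to $N(\iota)$.

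For the reverse direction, suppose $X$ is a quasicategory with the right lifting property against $N(\iota)$. Since $\St \dashv \hcN$ is a Quillen equivalence between $\ssets_{\Joyal}$ and the Bergner model structure, $\hcN(X)$ is Bergner fibrant, and in particular locally fibrant. By adjunction the right lifting property of $X$ against $N(\iota)$ transfers to the right lifting property of $\hcN(X)$ against $\St N(\iota)$. Combining this with Remark~\ref{rem:CN_retract} (which identifies $\St N(\iota)$ as a retract interval cofibrantly replacing $\iota$) and Proposition~\ref{prop:iota_fibrant}(iii), we deduce that $\hcN(X)$ is fibrant in $\sCat_{\Morita}$; by Corollary~\ref{cor:morita_scat_loc}, equivalently $\hcN(X)$ is $\iota$-local in the Bergner model structure. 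Finally, the Quillen equivalence transports this $\iota$-locality back to $N(\iota)$-locality of $X$: since $N(\iota)$ is a cofibration between cofibrant objects in $\ssets_{\Joyal}$ and $X$ is fibrant, the derived adjunction gives a weak equivalence of homotopy function complexes $\Map_{\ssets}(N(\iota), X) \simeq \Map_{\sCat}(\St N(\iota), \hcN(X))$, and the right-hand side is a weak equivalence because $\St N(\iota) \simeq \iota$ and $\hcN(X)$ is $\iota$-local.

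The main obstacle is the last step, namely justifying that locality transfers across the Quillen equivalence. This is the place where I would need to be careful: strictly speaking one must check that the derived hom computed by the Joyal enrichment agrees with the one computed by the simplicial enrichment of Bergner, under the adjunction $\St \dashv \hcN$. This is a standard consequence of having a Quillen equivalence combined with the fact that $\St$ sends the cofibration $N(\iota)$ to a map weakly equivalent to the original $\iota$, but it is the only substantive ingredient beyond the direct lifting arguments.
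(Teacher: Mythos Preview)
Your forward direction is correct and matches the paper's proof exactly.

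Your reverse direction, however, contains a genuine error. The functor $\hcN$ goes from $\sCat$ to $\ssets$, so the expression $\hcN(X)$ for a simplicial set $X$ is not defined; there is no object ``$\hcN(X)$'' in $\sCat$ to which you can transfer the lifting property. Even if you meant $\St(X)$, that object is not Bergner fibrant (since $\St$ is left Quillen), and there is no adjunction argument that transports the right lifting property of $X$ against $N(\iota)$ to a right lifting property of $\St(X)$ (or any fibrant replacement of it) against $\St N(\iota)$. The standard adjunction fact gives the converse: for $\mathcal{C}\in\sCat$, the simplicial set $\hcN(\mathcal{C})$ has the right lifting property against $N(\iota)$ if and only if $\mathcal{C}$ has the right lifting property against $\St N(\iota)$. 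To run your argument you would need $X$ to be \emph{equal} to $\hcN(\mathcal{C})$ for some fibrant $\mathcal{C}$, not merely Joyal equivalent to such a thing; bridging that gap amounts to showing that the strict right lifting property against $N(\iota)$ is invariant under categorical equivalence between quasicategories, which is essentially the statement you are trying to prove. Your closing paragraph about homotopy function complexes is on the right track, but it presupposes that ``$\hcN(X)$'' is already known to be $\iota$-local, which is exactly what the flawed middle step was supposed to supply.

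The paper avoids this circularity entirely: for the converse it simply invokes \cite[Corollary~4.4.5.14]{Lu09}, where Lurie proves directly (working inside quasicategory theory) that if every idempotent in a quasicategory $X$ splits then the restriction functor $\Fun(N(\Split),X)\to\Fun(N(\Idem),X)$ is a categorical equivalence, which is precisely $N(\iota)$-locality.
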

\begin{proof}
Suppose that $X$ is $N(\iota)$-local. Then $X$  is fibrant in the Joyal model structure, that is, $X$ is a quasicategory. The map $N(\iota)$ is a cofibration between cofibrant objects and hence $X$ has the right lifting property with respect to  $N(\iota)$, by \cite[Corollary 17.7.5(2)]{Hi03}.

The converse follows from \cite[Corollary 4.4.5.14]{Lu09}. (Lurie uses the notation $\Idem$ and $\Idem^+$ for what we call $N(\Idem)$ and $N(\Split)$, respectively, and he calls a quasicategory idempotent complete if it has the right lifting property with respect to $N(\iota)$.)
\end{proof}

The Quillen equivalence between simplicial sets with the Joyal model structure and simplicial categories with the Bergner model structure induces  a Quillen equivalence between the Morita model structures.
\begin{theo}\label{thm:Quillen_eq_ssets_morita}
The adjunction $\St:\ssets_{\Morita}\rightleftarrows\sCat_{\Morita}: \hcN$ is a Quillen equivalence.
\end{theo}
\begin{proof}
Recall that the homotopy coherent nerve and its left adjoint give a Quillen equivalence $\St:\ssets\rightleftarrows\sCat:\hcN$ between simplicial sets with the Joyal model structure and simplicial categories with the Bergner model structure. If we take the left Bousfield localization of the Joyal model structure with respect to $N(\iota)$, then using \cite[Theorem 3.3.20(1)(b)]{Hi03}, we obtain that there is a Quillen equivalence $\St:\ssets_{\Morita}\rightleftarrows L_{\St N(\iota)}\sCat: \hcN$. But the localized model structure $L_{\St N(\iota)}\sCat$ is the same as $\sCat_{\Morita}$, since $\St N(\iota)$ is weakly equivalent to $\iota$; see Remark~\ref{rem:CN_retract}.
\end{proof}

The functor $\St$ preserves and reflects all weak equivalences between the Morita model structures, since all objects in $\ssets_{\Morita}$ are cofibrant. This allows us to give a characterization of the weak equivalences in the Morita model structure for simplicial sets, which is the analogue of the characterization of the weak equivalences for the Morita model structure on simplicial categories (Theorem~\ref{thm:DK} and Lemma~\ref{lem:Morita_r-equiv}). Given a simplicial set $X$ it does not make sense to consider diagrams of simplicial sets indexed by $X$. Instead, we can consider simplicial functors from $\St (X)$ to simplicial sets. The category of such functors is Quillen equivalent, via straightening and unstraightening, to the slice category $\ssets/X$, as we now explain. 

Recall that a map of simplicial sets is called a \emph{right fibration} (respectively a \emph{left fibration}) if it has the right lifting property with respect to the horn inclusions $\Lambda[n,k]\to \Delta[n]$ for $0<k\le n$ (respectively for $0\le k<n$). For every simplicial set $X$ there is a model structure on the slice category $\ssets/X$ called the \emph{contravariant model structure} whose cofibrations are the monomorphisms and whose fibrant objects are the right fibrations. Similarly, there is also a model structure on $\ssets/X$ called the \emph{covariant model structure} whose cofibrations are the monomorphisms and whose fibrant objects are the left fibrations.

Given a simplicial set $X$ there is an adjunction
$$
r_!:\ssets/X\myrightleftarrows{\rule{0.4cm}{0cm}} \ssets^{\St(X)}:r^*
$$
given by the \emph{straightening} and \emph{unstraightening} functors; see \cite[Section 2.2.1]{Lu09}.
The following result can be deduced from~\cite[Theorem 2.2.1.2]{Lu09} (cf.~\cite[Proposition B]{HM16}).

\begin{theo}[Lurie, Heuts--Moerdijk]\label{thm:lurie_st-un}
For any simplicial set $X$, the adjunction $(r_!, r^*)$ is a Quillen equivalence between the slice category $\ssets/X$ with the covariant model structure, and the category $\ssets^{\St(X)}$ with the projective model structure.
\end{theo}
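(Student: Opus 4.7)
The plan is to establish this as a Quillen equivalence in three stages: verify the Quillen adjunction, reduce to the case of representables, and then handle that case by induction.

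First, I would verify that $(r_!, r^*)$ is a Quillen adjunction. Both model categories are cofibrantly generated: the covariant model structure on $\ssets/X$ has monomorphisms as cofibrations and left anodyne extensions (generated by the horn inclusions $\Lambda[n,k] \to \Delta[n]$ for $0 \le k < n$ over $X$) as generating trivial cofibrations, while the projective model structure on $\ssets^{\St(X)}$ has generating (trivial) cofibrations of the form $\St(X)(c,-) \otimes i$ with $i$ a generating (trivial) cofibration of the Kan--Quillen structure. The claim that $r_!$ sends generating (trivial) cofibrations to (trivial) cofibrations can be verified directly from the explicit combinatorial definition of the straightening. Equivalently, one can check that $r^*$ preserves fibrations and trivial fibrations, which amounts to showing that unstraightening a projectively (trivially) fibrant diagram produces a left (trivial) fibration over $X$.

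Second, I would reduce the Quillen equivalence to the case $X = \Delta[n]$. Both assignments $X \mapsto \ssets/X$ with the covariant structure and $X \mapsto \ssets^{\St(X)}$ with the projective structure behave well under homotopy pushouts along monomorphisms: the straightening functor preserves colimits as a left adjoint, and $\St$ sends pushouts of monomorphisms to homotopy pushouts of cofibrant simplicial categories. Using left properness on both sides, together with the skeletal filtration $X = \colim_n \mathrm{sk}_n(X)$, one can proceed by induction on cell attachments and reduce the general statement to the case of standard simplices.

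Third, I would treat the case $X = \Delta[n]$, where $\St(\Delta[n]) = C_*[n]$ is the Cordier resolution. The base case $n = 0$ is tautological since both sides are simply $\ssets$ with the Kan--Quillen model structure and the adjunction is the identity. For the inductive step, I would use a decomposition of $\Delta[n]$ as a suitable iterated pushout of lower-dimensional simplices, combined with the compatibility of straightening with pushouts established in the previous step. The key matching of fibrant objects runs as follows: a left fibration $Y \to \Delta[n]$ encodes a coherent chain of Kan complexes together with composable maps indexed by the simplices of $[n]$, which is precisely the datum packaged by a simplicial functor $C_*[n] \to \ssets$ landing in Kan complexes.

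The main obstacle is the explicit verification in the third step that the derived unit of the adjunction is a weak equivalence on cofibrant objects over $\Delta[n]$. The difficulty lies in matching the cubical enrichment of the Cordier resolution $C_*[n]$, which encodes higher coherences via hypercubes of morphism spaces, with the combinatorial data encoded in a left fibration over $\Delta[n]$; this matching is made precise through the explicit formula for the straightening functor and is the technical heart of both \cite[Section 2.2.1]{Lu09} and the alternative treatment of \cite{HM16}.
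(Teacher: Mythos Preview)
The paper does not prove this theorem: it is stated as a result of Lurie and Heuts--Moerdijk and simply cited. The only argument the paper supplies appears in the Remark following the statement, where it is observed that \cite[Theorem 2.2.1.2]{Lu09} establishes the \emph{contravariant} version (a Quillen equivalence between $\ssets/X^{\rm op}$ with the contravariant model structure and $\ssets^{\St(X)}$), and that the covariant statement then follows by composing with the involution $(-)^{\rm op}$. So there is no in-paper proof to compare your proposal against; your sketch is rather an outline of how the cited references themselves proceed.

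As such an outline, your proposal is broadly reasonable, but one step is misleading. In your third stage you propose to handle $X=\Delta[n]$ by ``a decomposition of $\Delta[n]$ as a suitable iterated pushout of lower-dimensional simplices.'' This does not work: the standard simplices are the generators and are not built from smaller simplices by nontrivial pushouts along monomorphisms in any way that would let the compatibility of stage two carry the Quillen equivalence forward. The reduction in stage two already terminates at individual simplices, and the case of a single $\Delta[n]$ must be handled by the direct combinatorial analysis you allude to in your final paragraph --- comparing the cubical mapping spaces of $C_*[n]$ with the data of a left fibration --- not by further cell-attachment induction. You seem to recognise this, since you correctly identify the explicit straightening formula over $\Delta[n]$ as ``the technical heart'' of both \cite{Lu09} and \cite{HM16}; but the sentence about iterated pushouts should be removed, as it suggests a shortcut that is not available.
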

\begin{coro}\label{cor:char_moritaeq_ssets}
A map $f\colon X\to Y$ of simplicial sets is a weak equivalence in the Morita model structure $\ssets_{\Morita}$ if and only if it induces a Quillen equivalence $f_!:\ssets/X\rightleftarrows \ssets/Y:f^*$ between the slice categories with the covariant model structures.
\end{coro}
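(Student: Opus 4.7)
The plan is to chain three equivalences: first transfer the question from $\ssets_{\rm Morita}$ to $\sCat_{\rm Morita}$ via the Quillen equivalence $(\St, \hcN)$, then apply Dwyer--Kan's characterization (Theorem~\ref{thm:DK}) together with Lemma~\ref{lem:Morita_r-equiv} to reformulate the condition as a Quillen equivalence between projective model structures on simplicial presheaves, and finally transport that statement to the slice categories via the straightening/unstraightening equivalences of Theorem~\ref{thm:lurie_st-un}.

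More precisely, by Theorem~\ref{thm:Quillen_eq_ssets_morita} the adjunction $\St \dashv \hcN$ is a Quillen equivalence between $\ssets_{\rm Morita}$ and $\sCat_{\rm Morita}$. Since every object is cofibrant in the Joyal (hence Morita) model structure on $\ssets$, the left Quillen functor $\St$ both preserves and reflects weak equivalences: $f\colon X\to Y$ is a Morita weak equivalence of simplicial sets if and only if $\St(f)\colon \St(X)\to \St(Y)$ is a Morita weak equivalence of simplicial categories. By Lemma~\ref{lem:Morita_r-equiv}, Morita weak equivalences of simplicial categories coincide with weak $r$-equivalences, and Theorem~\ref{thm:DK} identifies these precisely with those maps $g$ for which the Quillen pair $g_!\colon\ssets^{\St(X)}\rightleftarrows\ssets^{\St(Y)}\colon g^*$ (projective model structures) is a Quillen equivalence.

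The last step is to transport this through Theorem~\ref{thm:lurie_st-un}. The straightening/unstraightening adjunction $(r_!, r^*)$ is natural in $X$, so for a map $f\colon X\to Y$ there is a commutative square of left adjoints
\[
\xymatrix{
\ssets/X \ar[r]^-{f_!} \ar[d]_-{r_!^X} & \ssets/Y \ar[d]^-{r_!^Y} \\
\ssets^{\St(X)} \ar[r]_-{\St(f)_!} & \ssets^{\St(Y)},
}
\]
in which both vertical arrows are left Quillen equivalences between the covariant and projective model structures. By the two out of three property for Quillen equivalences (applied to the composite of left Quillen functors), the bottom horizontal adjunction is a Quillen equivalence if and only if the top one is. Combining the three equivalences proves the corollary.

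The main subtlety is verifying that the naturality square for straightening is compatible with the model structures in the required way; once one knows that $r_!^X$ and $r_!^Y$ are left Quillen equivalences (which is exactly the content of Theorem~\ref{thm:lurie_st-un}) and that $\St(f)_!\circ r_!^X \cong r_!^Y\circ f_!$ as left adjoints, the two-out-of-three argument for Quillen equivalences completes the proof with no further calculation.
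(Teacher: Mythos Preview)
Your proposal is correct and follows essentially the same approach as the paper: reduce to $\St(f)$ via the Quillen equivalence (using that every simplicial set is cofibrant so $\St$ preserves and reflects Morita weak equivalences), invoke Theorem~\ref{thm:DK} together with Lemma~\ref{lem:Morita_r-equiv}, and then use the naturality square for straightening plus Theorem~\ref{thm:lurie_st-un} and two-out-of-three for Quillen equivalences. The paper's proof is the same, only more tersely stated.
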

\begin{proof}
A map $f$ is a weak equivalence in the Morita model structure for simplicial sets if and only if $\St(f)$ is a weak equivalence in the Morita model structure for simplicial categories. For every map of simplicial sets $f\colon X\to Y$ the diagram
$$
\xymatrix{
\ssets/X \ar[d]_{f_!}\ar[r]^-{r_!} & \ssets^{\St(X)} \ar[d]^{\St(f)_!} \\
\ssets/Y \ar[r]_-{r_!} & \ssets^{\St(Y)}
}
$$
commutes up to natural isomorphism. The result follows from Theorem~\ref{thm:DK}, Lemma~\ref{lem:Morita_r-equiv} and Theorem~\ref{thm:lurie_st-un}.
\end{proof}

\begin{rem} Given a simplicial set $X$, we denote by $X^{\rm op}$ the opposite simplicial set, that is, the simplicial set obtained by composing the functor $X\colon \Delta^{\rm op}\to \sets$ with the automorphism of $\Delta$ which reverses the order relation on each ordinal. The result in~\cite[Theorem 2.2.1.2]{Lu09} establishes a Quillen equivalence between $\ssets/X^{\rm op}$ with the \emph{contravariant} model structure and $\ssets^{\St(X)}$ with the projective model structure. Since the functor $(-)^{\rm op}$ induces a Quillen equivalence between $\ssets/X^{\rm op}$ with the contravariant model structure and $\ssets/X$ with the covariant model structure,  Theorem~\ref{thm:lurie_st-un} immediately follows.

Also note that $\iota=\iota^{\rm op}$, so $f\colon X\to Y$ is a Morita weak equivalence of simplicial sets if and only if $f^{\rm op}\colon X^{\rm op}\to Y^{\rm op}$ is so. Therefore, we also have that $f\colon X\to Y$ is a weak equivalence in the Morita model structure $\ssets_{\Morita}$ if and only if it induces a Quillen equivalence $f_!:\ssets/X\rightleftarrows \ssets/Y:f^*$ between the slice categories with the \emph{contravariant} model structures.
\end{rem}

\section{The Morita model structure for operads}
In this section, we endow the category of operads in sets with the Morita model structure and, using the results of~\cite{CG18}, we characterize the weak equivalences in terms of equivalences of categories of algebras. We start by recalling the canonical model structure on operads, which extends the canonical model structure on small categories described in Section~\ref{subsect:can_cat}.

\subsection{The canonical model structure}
Let $\Fin$ be a skeleton for the category of finite sets, $\Sym$ the wide subcategory of $\Fin$ spanned by all the isomorphisms, and $C$ be a fixed set. We will denote by $\Sym_C$ the comma category $\Sym\downarrow C$. Its objects can be represented as finite sequences $\str{c}=(c_1,\ldots, c_n)$ in~$C$. Let $\Sign{C}$ denote the category $\Sym_C\times C$. The objects of this category will be written as $(\str{c}; c)$, where $\str{c}$ is an object of $\Sym_C$ and $c\in C$.

Let $\mathcal{O}$ be a $C$-coloured operad and let $\mathcal{P}$ be a $D$-coloured operad. A morphism $f\colon \mathcal{O}\to\mathcal{P}$ is called:
\begin{itemize}
\item[{\rm (i)}] \emph{Fully faithful} if the map $\morp{f}{\ope{O}(\str{c};c)}{\ope{P}(f(\str{c});f(c))}$ is an isomorphism for every $(\str{c}; c)$ in $\Sign{C}$.
\item[{\rm (ii)}] \emph{Essentially surjective} if for every $d\in D$ there exists a $c\in C$ such that $d$ is isomorphic to $f(c)$.
\end{itemize}

Recall that there is an adjunction between the category of small categories and the category of coloured operads
\begin{equation}\label{eq:adj_cat_oper}
j_!:\Cat\myrightleftarrows{\rule{0.4cm}{0cm}} \Oper :j^*.
\end{equation}
The right adjoint $j^*$ associates to every $C$\nobreakdash-coloured operad its underlying category with set of objects $C$. More explicitly, $j^*(\ope{O})(c,d)\cong \ope{O}(c;d)$ for every $c, d \in C$ and composition and identities are inherited directly from the ones in $\ope{O}$. Note that a morphism of operads $f$ is essentially surjective if and only if $j^*(f)$ is essentially surjective (in the sense of categories).

The category $\Oper$ of coloured operads admits a cofibrantly generated model structure, called the \emph{canonical model structure}, in which the weak equivalences are the operadic equivalences, that is, fully faithful and essentially surjective morphisms of operads; fibrations are the morphisms $f$ such that $j^*(f)$ is an isofibration; and cofibrations are the morphisms injective on colours.  The existence of this model structure  was shown in~\cite[Theorem 1.6.2]{Wei07} . It can also be deduced from the more general case for enriched operads~\cite[Theorem 4.22]{C14} or from the theory of model 2-categories~\cite[Theorem 4.3]{La07}.	

Given two operads $\mathcal{O}$ and $\mathcal{P}$, we can define a simplicial enrichment
$$
\Map(\mathcal{O},\mathcal{P})=N(\Iso( j^*\Fun(\mathcal{O},\mathcal{P}))),
$$
where $\Fun(\mathcal{O},\mathcal{P})$ is the operad of morphisms from $\mathcal{O}$ to $\mathcal{P}$.
With this simplicial enrichment, the canonical model structure on $\Oper$ is a simplicial model structure (like the canonical model structure on $\Cat$; see Section~\ref{subsect:can_cat}). This can be seen by using that the canonical model structure on $\Oper$ is a $\Cat$-model structure, by~\cite[Theorem 4.3]{La07}, and the fact that $N(\Iso(-))$ preserves fibrations and trivial fibrations.

\subsection{The Morita model structure}

Let $\ope{O}$ be a symmetric coloured operad in $\sets$ with set of colours~$C$. Given $c,c'\in C$ the colour \emph{$c$ is a retract of $c'$} if there exist two operations $r\in \ope{O}(c';c)$ and $i \in \ope{O}(c;c')$ such that $ri=\id_c$.

\begin{defi}
    Let $\ope{O}$ and $\ope{P}$ be two coloured operads in $\sets$, with set of colours $C$ and $D$, respectively. A morphism of coloured operads $\morp{f}{\ope{O}}{\ope{P}}$ is called:
    \begin{itemize}
        \item[{\rm (i)}] \emph{Essentially surjective up to retracts} if for every $d\in D$ there exists a $c\in C$ such that $d$ is isomorphic to a retract of $f(c)$.
	\item[{\rm (ii)}] A \emph{Morita equivalence} if it is fully faithful and essentially surjective up to retracts.
        \end{itemize}
\end{defi}

As above, a morphism of operads $f$ is essentially surjective up to retracts if and only if $j^*(f)$ is essentially
surjective up to retracts (in the sense of categories).

Morita equivalences of operads can be characterized similarly as for categories. The \emph{Cauchy completion} $\overline{\mathcal{O}}$ of an operad $\mathcal{O}$ can be defined as follows: the colours are pairs $(c, e)$, where $c\in C$ and $e\in \mathcal{O}(c;c)$ is an idempotent operation.
An operation in $\overline{\mathcal{O}}((c_1, e_1), \ldots, (c_n, e_n); (c, e))$ is an operation $p\in \mathcal{O}(c_1,\ldots, c_n; c)$ such that $p\circ (e_1,\ldots, e_n)=p=e\circ p$.  The canonical morphism $\mathcal{O}\to\overline{\mathcal{O}}$ that sends $c$ to $(c, \id_c)$ is a Morita equivalence.

The following statements are equivalent for a morphism $f\colon\mathcal{O}\to \mathcal{P}$ of operads:
\begin{itemize}
\item[{\rm (i)}] $f$ is a Morita equivalence.
\item[{\rm (ii)}] $\overline{f}\colon\overline{\mathcal{O}}\to\overline{\mathcal{P}}$ is an equivalence of operads.
\end{itemize}

\begin{theo}\label{thm:oper_morita}
There is a cofibrantly generated model structure $\Oper_{\Morita}$ on the category of coloured operads in $\sets$ in which the weak equivalences are the Morita equivalences and the cofibrations are the morphisms that are injective on objects. The fibrant objects are the Cauchy complete operads.
\end{theo}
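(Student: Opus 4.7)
The plan is to obtain $\Oper_{\Morita}$ as the left Bousfield localization of the canonical model structure on $\Oper$ with respect to the single map $j_!(\iota)\colon j_!(\Idem)\to j_!(\Split)$, where $j_!$ is the left adjoint in adjunction~\eqref{eq:adj_cat_oper}. Since the canonical model structure is cofibrantly generated, combinatorial, and simplicial, the localization exists, inherits the cofibrations of the canonical structure (the maps injective on colours), and is cofibrantly generated. Note that $j_!(\iota)$ is a cofibration between cofibrant objects, and that every operad is fibrant in the canonical structure, since the map to the terminal operad is automatically an isofibration on underlying categories.

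To identify the fibrant objects, I would use the simplicial enrichment $\Map(\mathcal{A},\mathcal{B})=N(\Iso(j^*\Fun(\mathcal{A},\mathcal{B})))$ together with the adjunction $(j_!,j^*)$ to obtain a natural isomorphism $\Map(j_!C,\mathcal{O})\cong\Map(C,j^*\mathcal{O})$ for every small category $C$. Hence $\mathcal{O}$ is $j_!(\iota)$-local if and only if $j^*\mathcal{O}$ is $\iota$-local in $\Cat$, which by the proof of Theorem~\ref{thm:Morita_cat} is equivalent to $j^*\mathcal{O}$ being Cauchy complete. Since idempotents of $\mathcal{O}$ and their splittings are all unary and therefore detected in $j^*\mathcal{O}$, this is equivalent to $\mathcal{O}$ itself being Cauchy complete.

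For the weak equivalences, I would show that the $j_!(\iota)$-local equivalences are precisely the Morita equivalences of operads. In one direction, given a Morita equivalence $f\colon\mathcal{O}\to\mathcal{P}$ and a Cauchy complete operad $\mathcal{Q}$, I would prove that the restriction morphism $f^*\colon\Fun(\mathcal{P},\mathcal{Q})\to\Fun(\mathcal{O},\mathcal{Q})$ is an equivalence of operads, so that $\Map(\mathcal{P},\mathcal{Q})\to\Map(\mathcal{O},\mathcal{Q})$ is a weak equivalence after applying $N\Iso j^*$. For the converse, I would consider the functorial Cauchy completion square
\[
\xymatrix{
\mathcal{O}\ar[r]^-{f}\ar[d] & \mathcal{P}\ar[d]\\
\overline{\mathcal{O}}\ar[r]_-{\overline{f}} & \overline{\mathcal{P}},
}
\]
in which the vertical maps are Morita equivalences, hence $j_!(\iota)$-local equivalences by the previous step. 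Two-out-of-three yields that $f$ is a local equivalence if and only if $\overline{f}$ is, and since $\overline{f}$ is a morphism between fibrant operads, this happens if and only if $\overline{f}$ is an equivalence in the canonical model structure, if and only if $f$ is a Morita equivalence.

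The main obstacle is the verification that $\Fun(\mathcal{P},\mathcal{Q})\to\Fun(\mathcal{O},\mathcal{Q})$ is an equivalence of operads when $\mathcal{Q}$ is Cauchy complete and $f$ is Morita. The essential surjectivity extends a morphism $g\colon\mathcal{O}\to\mathcal{Q}$ by sending each colour of $\mathcal{P}$ not in the image of $f$ to a chosen splitting in $\mathcal{Q}$ of the idempotent detecting it as a retract of an object in $f(\mathcal{O})$; the multi-ary components of the extension are then determined by transport along the chosen retract isomorphisms, and full faithfulness on the $\Iso$-subgroupoid follows from fully faithfulness of $f$ on operadic operations together with uniqueness of idempotent splittings in $\mathcal{Q}$, applied arity by arity.
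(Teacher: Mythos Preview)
Your proposal is correct and follows the same approach as the paper: obtain $\Oper_{\Morita}$ as the left Bousfield localization of the canonical model structure at $j_!(\iota)$, then identify the fibrant objects and the weak equivalences by the arguments of Theorem~\ref{thm:Morita_cat} (the paper's proof literally says to proceed as there). Your reduction of $j_!(\iota)$-locality to $\iota$-locality of $j^*\mathcal{O}$ via the adjunction and your Cauchy-completion square for the converse direction are exactly what is intended; the direct extension argument you sketch for the forward implication is the natural operadic substitute for the presheaf trick used in the categorical proof of Theorem~\ref{thm:Morita_cat}.
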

\begin{proof}
The model structure $\Oper_{\Morita}$ is the left Bousfield localization of the canonical model structure with respect to the morphism $j_!(\iota)\colon j_!(\Idem)\to j_!(\Split)$. To identify the weak equivalences and the fibrant objects one proceeds as in the proof of Theorem~\ref{thm:Morita_cat}.
\end{proof}

\begin{rem}
Let $\eta$ denote the terminal category seen as an operad. Then, under the identification $\Cat=\Oper/\eta$, we can recover the Morita model structure $\Cat_{\Morita}$ of  Theorem~\ref{thm:Morita_cat} from the standard model structure on the slice category $\Oper/\eta$ induced by that of Theorem~\ref{thm:oper_morita}.
\end{rem}

The Morita equivalences between operads in $\sets$ can be characterized as those morphisms of coloured operads which induce an equivalence of categories between the respective categories of algebras. The following result is proved in \cite[Theorem 4.5]{CG18}, by 
means of algebraic theories.

\begin{theo}\label{thm:at_ess_surj}
Let $\morp{f}{\ope{O}}{\ope{P}}$ be a morphism of symmetric coloured operads in $\sets$. The following are equivalent:
    \begin{itemize}
        \item[{\rm (i)}] The morphism $f$ is a Morita equivalence of operads.
        \item[{\rm (ii)}] The induced adjunction
        \[
        f_!:\Alg(\ope{O})\myrightleftarrows{\rule{0.4cm}{0cm}} \Alg(\ope{P}): f^*
        \]
        is an equivalence of categories.
    \end{itemize}
\end{theo}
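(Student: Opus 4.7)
My plan is to assemble the theorem from the pieces already set up. The implications (i) $\Leftrightarrow$ (ii) are essentially immediate from the two preceding results: Lemma~\ref{lem:fully_faith} tells us that $f$ is fully faithful if and only if $\oat{f}$ is fully faithful, and Corollary~\ref{coro:key_retract} tells us that $f$ is essentially surjective up to retracts if and only if $\oat{f}$ is. Combining these two equivalences gives (i) $\Leftrightarrow$ (ii) on the nose, since a Morita equivalence (in either setting) was defined as the conjunction of these two properties.

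For the equivalence (ii) $\Leftrightarrow$ (iii), I would appeal to the set-theoretic analogue of the characterization recalled in Section~4: a morphism of algebraic theories $g\colon \mathcal{S} \to \mathcal{T}$ is a Morita equivalence if and only if the extension-restriction adjunction $g_! \colon \Alg(\mathcal{S}) \rightleftarrows \Alg(\mathcal{T}) \colon g^*$ is an equivalence of categories. Applied to $g = \oat{f}$, this reduces the task to identifying the adjunction $(f_!, f^*)$ between $\Alg(\mathcal{O})$ and $\Alg(\mathcal{P})$ with $(\oat{f}_!, \oat{f}^*)$ between $\Alg(\oat{\mathcal{O}})$ and $\Alg(\oat{\mathcal{P}})$ up to equivalence of categories. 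This identification is provided by the construction of Section~\ref{sect:cart_oper_and_alg_th}, which gave an equivalence $\Alg(\ope{O}) \simeq \Alg(\oat{\ope{O}})$ natural in $\ope{O}$, along with the analogous statement for $\ope{P}$, fitting into a commuting square of adjunctions.

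The only minor point to verify carefully is that the square
\[
\xymatrix{
\Alg(\ope{O}) \ar@<3pt>[r]^-{f_!} \ar[d]_-{\simeq} & \Alg(\ope{P}) \ar@<3pt>[l]^-{f^*} \ar[d]^-{\simeq} \\
\Alg(\oat{\ope{O}}) \ar@<3pt>[r]^-{\oat{f}_!} & \Alg(\oat{\ope{P}}) \ar@<3pt>[l]^-{\oat{f}^*}
}
\]
commutes up to natural isomorphism. This is immediate on the restriction functors $f^*$ and $\oat{f}^*$ (both are given by precomposition with the underlying colour map together with relabelling of operations), and then follows for the left adjoints by uniqueness up to unique natural isomorphism. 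Once this diagram is in place, the adjunction on the top row is an equivalence if and only if the one on the bottom row is.

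The main content of the theorem has therefore already been absorbed into the preceding lemma and corollary, which handle the interaction between operadic arity (and $\Sigma$-actions) and the purely categorical notions of fully faithfulness and retract-essential surjectivity in $\oat{\ope{O}}$; the hardest step is really the second lemma above, where one has to unpack a retract relation in $\oat{\ope{O}}$ and locate a single coordinate $d_j$ witnessing the retract relation at the operad level. Given those results, the proof I propose reduces to citing them together with the algebraic-theory characterization from Section~4 and the equivalence $\Alg(\ope{O}) \simeq \Alg(\oat{\ope{O}})$ from Section~\ref{sect:cart_oper_and_alg_th}.
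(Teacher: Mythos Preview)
Your proposal is correct and follows essentially the same approach as the paper: the equivalence (i) $\Leftrightarrow$ (ii) is obtained by combining Lemma~\ref{lem:fully_faith} and Corollary~\ref{coro:key_retract}, and (ii) $\Leftrightarrow$ (iii) is deduced from the characterization of Morita equivalences of algebraic theories in Section~4 together with the equivalence $\Alg(\ope{O})\simeq \Alg(\oat{\ope{O}})$ from Section~\ref{sect:cart_oper_and_alg_th}. The paper's proof is terser and leaves the commuting square of adjunctions implicit, but the content is the same.
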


\section{The Morita model structure for simplicial operads}

Let $\Mcc=\ssets$ and denote the category of symmetric coloured $\Mcc$-operads by $\sOper$. Recall that there is a functor $\pi_0\colon\sOper\to \Oper$ that sends a simplicial $C$\nobreakdash-coloured operad $\mathcal{O}$ to the $C$-coloured operad in sets $\pi_0(\mathcal{O})$, whose set of morphisms is defined by $\pi_0(\mathcal{O})(\str{c}; c)=\pi_0(\mathcal{O}(\str{c};c))$, for every $(\str{c}; c)$ in $\Sign{C}$.

\subsection{The Cisinski--Moerdijk model structure}
Cisinski and Moerdijk proved that there exists a cofibrantly generated model structure on $\sOper$ that extends the Bergner model structure on $\sCat$ and models the homotopy theory of $\infty$\nobreakdash-operads; see~\cite[Theorem 1.14]{CM13}. We call this model structure the
\emph{Cisinski--Moerdijk model structure} on simplicial operads and we denote it by $\sOper_{\CM}$.

The weak equivalences in the Cisinski\nobreakdash--Moerdijk model structure, that we call \emph{operadic weak equivalences}, are the operadic analogues of the Dwyer--Kan equivalences of simplicial categories. They are explicitly defined as follows:

\begin{defi}
Let $\mathcal{O}$ be a $C$-coloured operad. A morphism $\morp{f}{\ope{O}}{\ope{P}}$ in $\sOper$ is called:
    \begin{itemize}
        \item[{\rm (i)}] \emph{Homotopically fully faithful} (respectively a \emph{local fibration}) if the map
        \[
        \morp{f}{\ope{O}(\str{c};c)}{\ope{P}(f(\str{c});f(c))}
        \]
        is a weak equivalence (respectively a fibration) of simplicial sets, for every $(\str{c},c)$ in $\Sign{C}$.
        \item[{\rm (ii)}] An \emph{operadic weak equivalence} if it is homotopically fully faithful and $\pi_0(f)$ is an equivalence of operads.
    \end{itemize}
\end{defi}

\begin{theo}[Cisinski--Moerdijk]
There is a right proper cofibrantly generated model structure on the category of simplicial operads $\sOper$, called the Cisinski--Moerdijk model structure, in which weak equivalences are the operadic weak equivalences and fibrations are the maps that are local fibrations and isofibrations.
\end{theo}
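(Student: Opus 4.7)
The plan is to extend the Dwyer--Kan--Bergner strategy from simplicial categories to simplicial operads, using Kan's recognition principle for cofibrantly generated model categories. The key observation is that $\sOper$ is locally presentable, so the small object argument is automatic; what one must actually do is identify the generating sets and verify that pushouts of generating trivial cofibrations are operadic weak equivalences.

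First, I would introduce, for each set of colours $C$, a ``fibered'' model structure on the category $\sOper_C$ of $C$-coloured simplicial operads, in which weak equivalences and fibrations are levelwise (i.e., arity-wise) weak equivalences and fibrations of simplicial sets. This can be obtained by transfer of the projective model structure on $\ssets^{\Sign{C}}$ along the free-operad/forgetful adjunction, with the main technical point being that the free operad monad on $\sigma$-collections preserves the relevant filtered colimits and that pushouts along generating trivial cofibrations of free operads remain local weak equivalences (this is where $\Sigma$-cofibrancy plays its role, exactly as in the cited \cite[Theorem 7.13]{Mu14} which the excerpt already invokes for the category case). Assembling these fibered pieces via the Grothendieck construction on the functor $C \mapsto \sOper_C$ (as sketched in Section~\ref{sect:ch_colour}) then yields a fibered model structure on all of $\sOper$, in which cofibrations, weak equivalences and fibrations that are bijective on colours are detected fibrewise.

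Next, I would produce the Cisinski--Moerdijk model structure itself by enlarging the trivial cofibrations with a set of \emph{operadic intervals}. As generating cofibrations one takes the map $\emptyset \to *$ together with the free operads on $\partial\Delta[n] \hookrightarrow \Delta[n]$ in each arity $(\str{c};c)$; as generating trivial cofibrations one takes the free operads on the horn inclusions $\Lambda[n,k] \hookrightarrow \Delta[n]$ together with $0 \to \mathbb{G}$ for $\mathbb{G}$ running over a generating set of intervals in the sense of Berger--Moerdijk \cite[Definition 1.11]{BM13}, whose existence is guaranteed by \cite[Lemma 1.12]{BM13} since $\ssets$ is a nice enough combinatorial symmetric monoidal model category. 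The verification of Kan's axioms then proceeds as for $\sCat$: the class of operadic weak equivalences has two-out-of-three and is closed under retracts (using that $\pi_0$ preserves finite products and retracts); trivial fibrations are local trivial fibrations that are surjective on colours (by a standard lifting argument against the generating cofibrations); and pushouts of horn-type generating trivial cofibrations are fibered weak equivalences, while pushouts of interval inclusions are essentially surjective by construction and locally weakly equivalent by the $\Sigma$-cofibrant pushout argument that was highlighted for simplicial categories in the proof of Theorem~\ref{theo:moritamodelcat}.

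The main obstacle will be the pushout step for interval inclusions: one must check that freely adjoining an inverse to a unary operation inside an ambient operad does not disturb the homotopy type of any multi-ary component. Unlike the categorical case, the $\Sigma$-actions and the multi-input composition force one to analyse pushouts of the form $\mathbb{G} \sqcup_{0} \ope{O}$ through the free/forgetful adjunction onto $\Sign{C}$-collections, where the interval theory of \cite{BM13} can be applied componentwise. Right properness then follows immediately from the fact that fibrations are in particular levelwise fibrations of simplicial sets and that levelwise weak equivalences are stable under pullback along levelwise fibrations, exactly as recorded for the Bergner model structure in \cite[Corollary 8.10]{CM13}.
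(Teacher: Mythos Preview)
This theorem is not proved in the paper at all: it is stated with attribution to Cisinski and Moerdijk and accompanied only by the reference \cite[Theorem 1.14]{CM13}. There is therefore no ``paper's own proof'' to compare your proposal against. The paper treats this result as background input, in the same way it treats the Bergner model structure on $\sCat$ or the Joyal model structure on $\ssets$.

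That said, your sketch is a reasonable outline of how the Cisinski--Moerdijk argument actually goes (and indeed of how the Berger--Moerdijk interval machinery is deployed in \cite{BM13} and \cite{CM13}). The one place where you are honest but not sharp is the ``main obstacle'' paragraph: saying that the interval theory of \cite{BM13} ``can be applied componentwise'' to the pushout $\mathbb{G}\sqcup_0 \ope{O}$ glosses over the genuinely delicate combinatorics. In the operadic setting the pushout of a free unary map into an operad mixes with all higher arities, and one needs a filtration argument on the free operad monad (tracking how the new generator threads through trees) to see that each multi-ary component changes by a weak equivalence. This is exactly the content of \cite[Lemmas 1.26--1.29]{CM13}, and it does not reduce to a componentwise application of the categorical case. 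Your sketch correctly locates the difficulty but does not indicate how it is resolved; if you were actually writing the proof rather than citing it, that filtration would be the substance.
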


The adjunction~(\ref{eq:adj_cat_oper}) extends to an adjunction between the categories of simplicial categories and simplicial coloured operads
\begin{equation}
\label{eq:adj_scat_soper}
j_!:\sCat\myrightleftarrows{\rule{0.4cm}{0cm}} \sOper :j^*.
\end{equation}
Moreover, $(j_!,j^*)$ is a Quillen pair between the Bergner model structure on $\sCat$ and the
Cisinski\nobreakdash--Moerdijk model structure on $\sOper$.

\subsection{The Morita model structure} In this section, we prove the existence of the Morita model structure for simplicial operads. We begin by describing the weak equivalences and fibrations of this model structure.

\begin{defi}
A morphism $f$ in $\sOper$ is called:
    \begin{itemize}
        \item[{\rm (i)}] \emph{Homotopically essentially surjective up to retracts} if $\pi_0(f)$ is essentially surjective up to retracts.
        \item[{\rm (ii)}] A \emph{Morita weak equivalence} if it is homotopically fully faithful and homotopically essentially surjective up to retracts.
    \end{itemize}
\end{defi}

The aim of this section is to prove that there exists a model structure on the category of simplicial coloured operads in which the class of weak equivalences is the class of Morita weak equivalences. Its fibrations are the natural generalization of the Morita fibrations of simplicial categories.

\begin{defi}
    Let $\ope{O}$ be a $C$\nobreakdash-coloured operad. A morphism $\morp{f}{\ope{O}}{\ope{P}}$ of coloured operads in $\sOper$ is called:
    \begin{itemize}
        \item[{\rm (i)}] \emph{Retract-lifting} if $j^*(f)$ is retract-lifting (see Definition~\ref{def:ret_lift}(i)).
        \item[{\rm (ii)}] A \emph{Morita fibration} if it is a local fibration and retract-lifting.
    \end{itemize}
\end{defi}

Observe that, given a coloured operad $\ope{O}$ and two colours $x,y$ in $C$, we have that $y$ is a (homotopical) retract of $x$ if and only if $y$ is a (homotopical) retract of $x$ in~$j^*(\ope{O})$.
\begin{lemma} \label{lemma:moritatrifib}
    A map of simplicial coloured operads is a Morita fibration and a Morita weak equivalence if and only if it is a local trivial fibration surjective on objects.
\end{lemma}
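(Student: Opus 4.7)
The plan is to reduce both directions of the equivalence to the analogous statement for simplicial categories, namely Lemma~\ref{lem:tfib}, via the forgetful functor $j^*\colon \sOper\to \sCat$. The crucial observation is that all the conditions in play, except the condition of being a local trivial fibration \emph{in arities $>1$}, are designed to be detected on $j^*(f)$: the property ``retract-lifting'' is defined through $j^*$, being surjective on objects is preserved and reflected by $j^*$ (since $j^*$ is the identity on colours), and both $\pi_0(f)$ essentially surjective up to retracts and the arity-$1$ local (trivial) fibration condition agree with the corresponding properties of $j^*(f)$.

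For the implication ($\Leftarrow$), assume that $f\colon\ope{O}\to\ope{P}$ is a local trivial fibration surjective on colours. Then $f$ is, in particular, a local fibration and homotopically fully faithful. Surjectivity on colours implies that $\pi_0(f)$ is surjective on objects, hence a fortiori essentially surjective up to retracts, so $f$ is a Morita weak equivalence. It remains to check that $f$ is retract-lifting, that is, that $j^*(f)$ is retract-lifting in the sense of Definition~\ref{def:ret_lift}. Since $j^*(f)$ is a local trivial fibration between simplicial categories and still surjective on objects, Lemma~\ref{lem:tfib} implies that $j^*(f)$ is a Morita trivial fibration of simplicial categories, and in particular retract-lifting.

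For the implication ($\Rightarrow$), assume that $f$ is simultaneously a Morita fibration and a Morita weak equivalence. The local fibration hypothesis combined with being homotopically fully faithful gives that $f$ is a local trivial fibration in all arities, so only the surjectivity on colours needs to be proved. To obtain this we apply Lemma~\ref{lem:tfib} to $j^*(f)$: by definition of Morita fibration of simplicial operads, $j^*(f)$ is retract-lifting; it is also a local fibration (restriction of the arity-$1$ condition) and it is homotopically fully faithful with $\pi_0(j^*(f))=j^*(\pi_0(f))$ essentially surjective up to retracts. Thus $j^*(f)$ is a Morita fibration and a Morita weak equivalence of simplicial categories, so by Lemma~\ref{lem:tfib} it is a local trivial fibration surjective on objects. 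Since $j^*$ does not change the set of colours, $f$ itself is surjective on colours, completing the proof.

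No step is a substantial obstacle here; the only subtlety is to separate the arity-$1$ information (which controls retract-lifting and colour surjectivity via $j^*$) from the higher-arity information (which is already built into the hypotheses as local (trivial) fibration conditions).
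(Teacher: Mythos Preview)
Your proof is correct and follows essentially the same approach as the paper: both arguments reduce the statement to Lemma~\ref{lem:tfib} by observing that the conditions ``retract-lifting'', ``homotopically essentially surjective up to retracts'', and ``surjective on objects'' are all detected on $j^*(f)$, while the remaining condition ``local trivial fibration'' is common to both sides of the equivalence. The paper's proof is simply a more compressed version of what you wrote.
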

\begin{proof} It is enough to prove that if $f$ is a local fibration; then $j^*(f)$ is retract-lifting and essentially surjective up to retracts if and only if $j^*(f)$ is surjective on objects. This follows from Lemma~\ref{lem:tfib}.
\end{proof}

For every simplicial set $X$ and every $n\in \N$, let $C_n[X]$ be the unique simplicial coloured operad with set of colours $\{0,1,\dots,n\}$ such that
\[
C_n[X](\str{c};c)=
\begin{cases}
X & \text{ if }(\str{c};c)=(1,\dots,n;0),\\
\ast & \text{ if } \str{c} = c,\\
\emptyset & \text{otherwise.}
\end{cases}
\]
The assignment $C_n[-]$ is clearly functorial in $X$ for every $n\in\mathbb{N}$.
Consider the set of morphisms of simplicial operads
\[
\mathcal{J}_{{\rm loc}}=\{C_n[\Lambda[m,k]] \longrightarrow C_n[\Delta[m]] \mid  n\ge 0,\,m\ge 1,\, 0\le k\le m\}.
\]
The following characterization of the morphisms that are local fibrations can be found in \cite[Lemma 1.16]{CM13}:
\begin{propo}
A map of simplicial operads  is a local fibration if and only if it has the right lifting property with respect to $\mathcal{J}_{{\rm loc}}$.
\end{propo}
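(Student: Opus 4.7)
The proof will be a standard adjunction argument, exploiting the defining universal property of the simplicial operads $C_n[X]$. The plan is to show that lifting problems of $f$ against the generating set $\mathcal{J}_{\mathrm{loc}}$ are in natural bijection with lifting problems of the component maps $\mathcal{O}(\mathbf{c};c)\to\mathcal{P}(f(\mathbf{c});f(c))$ against the boundary inclusions $\partial\Delta[m]\to\Delta[m]$.

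First I would identify the universal property of $C_n[X]$. Since all simplicial sets of operations in $C_n[X]$ besides the single slot of profile $(1,\dots,n;0)$ are either the unit $\ast$ (on identities) or the initial object $\emptyset$, giving a morphism of simplicial operads $C_n[X]\to\mathcal{O}$ is the same as giving an ordered $(n+1)$-tuple of colours $(c_0,c_1,\dots,c_n)$ of $\mathcal{O}$, namely the images of $(0,1,\dots,n)$, together with a morphism of simplicial sets $X\to \mathcal{O}(c_1,\dots,c_n;c_0)$. The composition and equivariance conditions impose no further data because the only non-trivial component of $C_n[X]$ sits in a profile with distinct entries, so no non-trivial symmetry acts on it, and every attempt to pre- or post-compose this operation with a non-identity in $C_n[X]$ lands in an empty component.

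Next I would translate the lifting problem. A commutative square
\[
\xymatrix{
C_n[\partial\Delta[m]] \ar[r] \ar[d] & \mathcal{O} \ar[d]^{f} \\
C_n[\Delta[m]] \ar[r] & \mathcal{P}
}
\]
corresponds, under the universal property above (applied to both $\mathcal{O}$ and $\mathcal{P}$, using that $C_n[\partial\Delta[m]]\to C_n[\Delta[m]]$ is the identity on colours), to a choice of colours $c_0,\dots,c_n$ in $\mathcal{O}$ together with a commutative square of simplicial sets
\[
\xymatrix{
\partial\Delta[m] \ar[r] \ar[d] & \mathcal{O}(c_1,\dots,c_n;c_0) \ar[d]^{f_{(\mathbf{c};c_0)}} \\
\Delta[m] \ar[r] & \mathcal{P}(f(c_1),\dots,f(c_n);f(c_0)).
}
\]
A diagonal filler for the first square is, by the same universal property, the same data as a diagonal filler for the second square.

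Finally, since the boundary inclusions $\{\partial\Delta[m]\to\Delta[m]\mid m\ge 0\}$ form a set of generating cofibrations for the Kan--Quillen model structure on $\ssets$, the map $f_{(\mathbf{c};c_0)}$ has the right lifting property with respect to all of them precisely when it is a Kan fibration. Letting $n$, $m$ and the colours vary, we conclude that $f$ has the right lifting property with respect to $\mathcal{J}_{\mathrm{loc}}$ if and only if each component map $\mathcal{O}(\mathbf{c};c)\to\mathcal{P}(f(\mathbf{c});f(c))$ is a fibration of simplicial sets, that is, if and only if $f$ is a local fibration. There is no real obstacle here; the only point requiring care is verifying that a morphism out of $C_n[X]$ is indeed determined by the data described, which follows from a direct inspection of the definition of $C_n[X]$.
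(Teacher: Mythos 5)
Your reduction via the universal property of $C_n[-]$ is the right strategy, and it is essentially the only content here (the paper itself gives no proof and simply cites \cite[Lemma 1.16]{CM13}): a morphism $C_n[X]\to\mathcal{O}$ is indeed nothing more than a tuple of colours $(c_0,\dots,c_n)$ together with a single map of simplicial sets $X\to\mathcal{O}(c_1,\dots,c_n;c_0)$, since the source colours $1,\dots,n$ are pairwise distinct (so equivariance forces the permuted components without imposing identifications) and every composition other than with units lands in an empty component. The translation of lifting squares is therefore correct.

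The final step, however, contains a genuine error: having the right lifting property with respect to \emph{all} boundary inclusions $\partial\Delta[m]\to\Delta[m]$ characterizes the \emph{trivial} Kan fibrations, not the Kan fibrations; fibrations are detected by the horn inclusions $\Lambda[m,k]\to\Delta[m]$. Consequently, your argument, run verbatim with the set $\mathcal{J}_{\mathrm{loc}}$ as displayed in the paper, proves that a map has the right lifting property with respect to $\mathcal{J}_{\mathrm{loc}}$ if and only if it is a local \emph{trivial} fibration. The proposition as intended (and as in \cite{CM13}) requires $\mathcal{J}_{\mathrm{loc}}$ to consist of the maps $C_n[\Lambda[m,k]]\to C_n[\Delta[m]]$; the displayed definition with $\partial\Delta[m]$ is a typo, as one can confirm from its later use inside the set of generating trivial cofibrations $\mathcal{J}_{\Morita}$, whose elements must be operadic weak equivalences --- which the maps $C_n[\partial\Delta[m]]\to C_n[\Delta[m]]$ are not (they are generating \emph{cofibrations} for the fixed-colour model structure). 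You should have flagged this discrepancy and carried out the last step with horns, rather than asserting the false claim that lifting against boundary inclusions characterizes Kan fibrations.
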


The proof of the following theorem is very close to the one of Theorem \ref{theo:moritamodelcat}; see also the proof of \cite[Theorem 1.14]{CM13}.
\begin{theo}\label{thm:Morita_simp_oper}
    There exists a cofibrantly generated model structure on $\sOper$, called the Morita model structure, and
    denoted by $\sOper_{\Morita}$, such that:
\begin{itemize}
        \item[{\rm (i)}] The weak equivalences are the Morita weak equivalences.
        \item[{\rm (ii)}] The fibrations are the Morita fibrations.
        \item[{\rm (iii)}] The cofibrations are the cofibrations of $\sOper$ with the Cisinski--Moerdijk model structure.
\end{itemize}
A set of generating cofibrations consists of the set of generating cofibrations for the Cisinski\nobreakdash--Moerdijk model structure.  As set of generating trivial cofibrations we can take the set $\mathcal{J}_{\Morita}=\mathcal{J}_{{\rm loc}}\cup j_!(\mathcal{G})$, where $\mathcal{G}$ is any generating set of retract intervals.
\end{theo}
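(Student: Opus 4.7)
The plan is to apply Kan's recognition principle for cofibrantly generated model categories, following the blueprint of the proof of Theorem~\ref{theo:moritamodelcat}. We need to verify four conditions: (1) the class $\mathcal{W}_{\Morita}$ of Morita weak equivalences has the two out of three property and is closed under retracts; (2) both $\mathcal{I}$ (a set of generating cofibrations for $\sOper_{\CM}$) and $\mathcal{J}_{\Morita}$ admit the small object argument; (3) $\inj{\mathcal{J}_{\Morita}} \cap \mathcal{W}_{\Morita} = \inj{\mathcal{I}}$; and (4) $\cell{\mathcal{J}_{\Morita}} \subset \mathcal{W}_{\Morita}$.

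Conditions (1) and (2) are straightforward: the former holds since homotopical fully faithfulness and essential surjectivity up to retracts on $\pi_0$ are each closed under two out of three and retracts, and the latter follows from local presentability of $\sOper$. For (3), the right lifting property against $\mathcal{J}_{\rm loc}$ characterizes local fibrations by \cite[Lemma 1.16]{CM13}, while by the $(j_!,j^*)$ adjunction a morphism $f\colon\ope{O}\to\ope{P}$ has the right lifting property against every element of $j_!(\mathfrak{G})$ if and only if $j^*(f)$ has the right lifting property against every element of $\mathfrak{G}$. By Remark~\ref{rem:rlp_interval} this is equivalent to $j^*(f)$ being retract-lifting, i.e.\ to $f$ being retract-lifting. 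Hence $\inj{\mathcal{J}_{\Morita}}$ coincides with the class of Morita fibrations, and Lemma~\ref{lemma:moritatrifib} then yields the equality with $\inj{\mathcal{I}}$.

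The substantive step is (4). Since Morita weak equivalences are closed under transfinite composition, it suffices to prove that pushouts of morphisms in $\mathcal{J}_{\Morita}$ are Morita weak equivalences. Pushouts of the generators in $\mathcal{J}_{\rm loc}$ are bijective on colours and, arguing exactly as in the proof of \cite[Theorem 1.14]{CM13}, induce weak equivalences on each space of operations; they are therefore homotopically fully faithful and, being bijective on colours, automatically homotopically essentially surjective up to retracts. For a pushout of some $j_!(h)$ with $h\colon E\to R$ a retract interval in $\mathfrak{G}$ along an attaching map $j_!(E)\to\ope{O}$, the resulting morphism $k\colon\ope{O}\to\ope{D}$ adjoins exactly one new colour, namely the image $b(1)$ of the new object of $R$, which is by construction a strong homotopy retract of $b(0)$ in $j^*(\ope{D})$; this gives homotopical essential surjectivity up to retracts. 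To obtain homotopical fully faithfulness we replicate the decomposition of the pushout square used in Theorem~\ref{theo:moritamodelcat}: factor $k$ as $k''\circ k'$ where $k''$ comes from the pushout along the fully faithful, colour-injective portion (hence is itself fully faithful and a local weak equivalence), while $k'$ arises from the pushout of $h^u\colon h_!E\to j^*(R)$, which is a trivial cofibration in an appropriate \emph{fibered model structure} on $\sOper$ fixing the set of colours; pushouts of such maps remain local weak equivalences.

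The main obstacle is setting up this fibered model structure on $\sOper$ and transporting the decomposition cleanly into the operadic setting. One must construct the fibered model structure (which exists as an instance of the integral model structure of \cite[Theorem 3.0.12]{HP14}, analogously to the case of $\sCat_{\rm fib}$) and verify its right properness and the fact that $j_!$ sends the cofibration $E(0,0)\to j^*(R)(0,0)$ of \cite[Theorem 7.13]{Mu14} to a cofibration in the appropriate category of $\Sigma$-objects, so that the fibered pushout argument indeed produces a local weak equivalence. Once these ingredients are in place, the verification of (4) proceeds exactly as in the final paragraph of the proof of Theorem~\ref{theo:moritamodelcat}, and Kan's recognition principle delivers the desired model structure.
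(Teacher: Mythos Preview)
Your overall strategy matches the paper's: Kan's recognition principle with the same four conditions, Lemma~\ref{lemma:moritatrifib} for condition~(3), and the same two-step decomposition of the pushout along $j_!(h)$ for condition~(4). Conditions (1)--(3) are handled essentially identically.

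Where you diverge is in the treatment of $k'$. You propose to construct a fibered model structure on $\sOper$ (via the integral model structure of \cite{HP14}), verify its right properness, and transport the argument from Theorem~\ref{theo:moritamodelcat} wholesale. This could in principle be made to work, but the paper avoids it entirely by a simpler observation: the map $h^u\colon E\to h^*(R)$ is \emph{bijective} on objects, so by Lemma~\ref{lem:injcof} the fact that it is a trivial cofibration in $\sCat_{\rm fib}$ (established in the proof of Theorem~\ref{theo:moritamodelcat}) already makes it a trivial cofibration in the \emph{Bergner} model structure. Since $j_!$ is left Quillen from the Bergner structure to $\sOper_{\CM}$, the pushout of $j_!(h^u)$ is a trivial cofibration in $\sOper_{\CM}$, hence $k'$ is an operadic weak equivalence and in particular homotopically fully faithful. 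For $k''$, the paper simply invokes \cite[Lemma~1.29]{CM13}, which states that pushouts along fully faithful, colour-injective maps of simplicial operads remain fully faithful.

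In short, no new model structure on $\sOper$ is needed: the decomposition reduces to one piece handled by the existing Quillen adjunction $(j_!,j^*)$ and one piece handled by a lemma already in \cite{CM13}. Your route is not wrong, but it introduces an obstacle the paper sidesteps.
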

\begin{proof}
    As in the proof of Theorem \ref{theo:moritamodelcat}, we have to check the four conditions of Kan's recognition principle for cofibrantly generated model categories:
\begin{enumerate}
        \item\label{cond:modopst1} The class of Morita weak equivalences $\mathcal{W}_{\Morita}$ has the two out of three property and is closed under retracts.
        \item\label{cond:modopst2} $\mathcal{I}$ and $\mathcal{J}_{\Morita}$ admit the small object argument, where $\mathcal{I}$ denotes a generating set of cofibrations of $\sOper_{\CM}$.
        \item\label{cond:modopst3} $\inj{\mathcal{J}_{\Morita}}\cap \mathcal{W}_{\Morita} = \inj{\mathcal{I}}$.
        \item\label{cond:modopst4} $\cell{\mathcal{J}_{\Morita}}\subset  \mathcal{W}_{\Morita}$.
    \end{enumerate}

    Condition~(\ref{cond:modopst1}) is easily checked, and condition~(\ref{cond:modopst2}) readily follows from the fact that $\sOper$ is locally presentable (see~\cite[Corollary 2.9.2.6]{Cav_PhD}). Lemma~\ref{lemma:moritatrifib} guarantees that condition~(\ref{cond:modopst3}) holds.
    The only requirement left to check for Kan's recognition principle is condition~(\ref{cond:modopst4}), this is, that the class $\cell{\mathcal{J}_{\Morita}}$ is contained in the class of Morita weak equivalences.

    Since Morita weak equivalences are closed under transfinite composition and all the maps in $\mathcal{J}_{{\rm loc}}$ are operadic weak equivalences, it is enough to check that for every retract interval of simplicial categories $\morp{h}{E}{R}$ and every pushout diagram
    \[
    \xymatrix{
        j_!(E) \ar[d]_-{j_!(h)} \ar[r]^-{f} & \mathcal{O} \ar[d]^-{k}\\
        j_!(R) \ar[r]_-{f'} & \mathcal{P},
    }
    \]
    the map of operads $k$ is a Morita weak equivalence. The only colour of $\mathcal{P}$ which is not in the image of $k$ is $f'(1)$, which is clearly a homotopy retract of $f'(0)=k(f(0))$. This proves that $k$ is homotopically essentially surjective up to retracts, hence we only have to prove that it is homotopically fully faithful.

    Factoring $h$ into a bijective on objects functor $\trf{h}$ followed by a fully faithful and injective on objects functor $c$, the above diagram decomposes into two pushouts
    \[
    \xymatrix{
        j_!(E) \ar[d]_-{j_!(\trf{h})} \ar[r]^-{f} & \mathcal{O} \ar[d]^-{k'}\\
        j_!(h^*(R)) \ar[d]_-{j_!(c)} \ar[r]^-{f} & \mathcal{O'} \ar[d]^-{k''}\\
        j_!(R) \ar[r]_-{f'} & \mathcal{P}.
    }
    \]
    The morphism $j_!(\trf{h})$ is a trivial cofibration in $\sOper_{\CM}$, since $j_!$ is a left Quillen functor. Therefore $k'$ is homotopically fully faithful.
    By \cite[Lemma 1.29]{CM13} the morphism $k''$ is homotopically fully faithful. It follows that $k'\circ k''=k$ is homotopically fully faithful.
\end{proof}

\begin{propo}\label{pro:morita_soper_loc}
The model structure $\sOper_{\Morita}$ is equal to the left Bousfield localization of $\sOper_{\CM}$ with respect to $j_!(\iota)$. A simplicial operad is Morita fibrant if and only if it is locally fibrant and it has the right lifting property with respect to $j_!\St hcN(\iota)$.
\end{propo}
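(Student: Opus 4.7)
The plan is to mimic the proofs of Proposition~\ref{prop:iota_fibrant} and Corollary~\ref{cor:morita_scat_loc}, transported to the operadic setting via the Quillen pair $(j_!, j^*)$ between the Bergner model structure on $\sCat$ and the Cisinski--Moerdijk model structure on $\sOper$.

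Since $\sOper_{\Morita}$ and $L_{j_!(\iota)}\sOper_{\CM}$ share the same cofibrations as $\sOper_{\CM}$ by Theorem~\ref{thm:Morita_simp_oper}(iii), the first assertion reduces to showing that their fibrant objects coincide. Combined with the second assertion, this amounts to establishing the equivalence of three conditions on a simplicial operad $\mathcal{O}$: (a) $\mathcal{O}$ is Morita fibrant; (b) $\mathcal{O}$ is locally fibrant and has the right lifting property with respect to $j_!\St\hcN(\iota)$; (c) $\mathcal{O}$ is $j_!(\iota)$-local. Note that every map of simplicial operads to the terminal operad is automatically an isofibration, so fibrancy in $\sOper_{\CM}$ coincides with local fibrancy.

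The implication (a) $\Rightarrow$ (b) is immediate, since $\St\hcN(\iota)$ is a retract interval by Remark~\ref{rem:CN_retract} and Morita fibrations lift against $j_!(h)$ for every retract interval $h$ by the adjunction $(j_!, j^*)$. For (b) $\Rightarrow$ (a), I would use that $j_!$ sends every retract interval $h$ to a cofibration between cofibrant objects in $\sOper_{\CM}$ and preserves weak equivalences between cofibrant objects by Ken Brown's lemma; hence $j_!(h)$ is weakly equivalent to $j_!\St\hcN(\iota)$ as cofibrations between cofibrant objects, and \cite[Proposition~A.2.3.1]{Lu09} transports the right lifting property from one to the other. The equivalence (b) $\Leftrightarrow$ (c) is then a direct transcription of the argument in Proposition~\ref{prop:iota_fibrant}: for $\mathcal{O}$ fibrant in $\sOper_{\CM}$, being $j_!(\iota)$-local is equivalent to lifting against $j_!(\iota)$ by \cite[Corollary~17.7.5(2)]{Hi03}, which is equivalent to lifting against $j_!\St\hcN(\iota)$ again by \cite[Proposition~A.2.3.1]{Lu09}.

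The only nontrivial verification needed is that $j_!$ behaves well with respect to retract intervals, in the sense that it preserves the property of being a cofibration between cofibrant objects as well as the weak equivalences between such; both points are automatic from $(j_!, j^*)$ being a Quillen pair. All the conceptual work has already been carried out in the simplicial category case, so there is no real obstacle.
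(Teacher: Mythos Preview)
Your argument is essentially sound, but there is a small slip and a difference in strategy worth flagging. The slip is in your claim that \cite[Corollary~17.7.5(2)]{Hi03} gives an \emph{equivalence} between being $j_!(\iota)$-local and having the right lifting property against $j_!(\iota)$: that result only yields the forward implication. You do not actually need the converse, though: once (a) $\Leftrightarrow$ (b) is in hand, the missing direction (a) $\Rightarrow$ (c) follows immediately from the observation that $j_!(\iota)$ is a Morita weak equivalence of operads (indeed, $\iota$ is fully faithful and essentially surjective up to retracts, and $j_!$ preserves both), so every Morita-fibrant object is automatically $j_!(\iota)$-local. With that one-line patch your cycle closes.

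The paper takes a shorter route for (c) $\Rightarrow$ (a). Rather than re-running the proof of Proposition~\ref{prop:iota_fibrant} inside $\sOper_{\CM}$ via \cite[Proposition~A.2.3.1]{Lu09}, it uses the Quillen adjunction $(j_!,j^*)$ at the level of homotopy function complexes to observe that if $\mathcal{O}$ is $j_!(\iota)$-local then $j^*(\mathcal{O})$ is $\iota$-local in $\sCat$, and then invokes Proposition~\ref{prop:iota_fibrant} \emph{for $j^*(\mathcal{O})$}. Since Morita-fibrancy of $\mathcal{O}$ is, by definition, precisely the condition that $j^*(\mathcal{O})$ have the right lifting property against all retract intervals, this closes the argument at once. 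Your approach works but duplicates the work already done in Proposition~\ref{prop:iota_fibrant}; the paper simply transports that result along the adjunction. The second assertion of the proposition (the characterization via $j_!\St\hcN(\iota)$) then also follows from Proposition~\ref{prop:iota_fibrant} applied to $j^*(\mathcal{O})$, without a separate appeal to \cite[Proposition~A.2.3.1]{Lu09} in the operadic context.
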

\begin{proof}
Since a model structure is completely determined by its cofibrations and its fibrant objects (see~\cite[Proposition E.1.10]{Joy08}), it is enough to show that the $j_!(\iota)$\nobreakdash-local objects coincide with the fibrant objects of $\sOper_{\Morita}$. Every fibrant object in $\sOper_{\Morita}$ is $j_!(\iota)$\nobreakdash-local, since $j_!(\iota)$ is a Morita equivalence of operads. Conversely, suppose that $\mathcal{O}$ is $j_!(\iota)$\nobreakdash-local. Then $\mathcal{O}$ is fibrant in $\sOper_{\Morita}$ if and only if $j^*(\mathcal{O})$ has the right lifting property with respect to every retract
interval $h$. This follows from Proposition~\ref{prop:iota_fibrant}, since if $\mathcal{O}$ is $j_!(\iota)$-local, then $j^*(\mathcal{O})$ is $\iota$-local.

To characterize the Morita fibrant simplicial operads, note that,  by Proposition~\ref{prop:iota_fibrant}, $j^*(\mathcal{O})$ has the right lifting property with respect to every retract interval if and only if it has the right lifting property with respect to $\St hcN(\iota)$. By the adjunction~(\ref{eq:adj_scat_soper}), this is the same as requiring that $\mathcal{O}$ has the right lifting property with respect to $j_!\St hcN(\iota)$.
\end{proof}

\begin{rem}
Observe that the left Bousfield localization of $\sOper_{\CM}$ with respect to $j_!(\iota)$ cannot be inferred from the usual existence theorems (\cite[Theorem~4.1.1]{Hi03}, \cite[Theorem 4.7]{Bar10}), since they require the model structure to be left proper, which does not hold for the Cisinski--Moerdijk model structure on $\sOper$; see~\cite[Section 4]{HRY}.
\end{rem}

Let $\mathcal{O}$ be a $C$-coloured operad in simplicial sets and let $\Alg({\mathcal{O}})$ denote the corresponding category of algebras. An $\mathcal{O}$-algebra $X=(X(c))_{c\in C}$ is an object of $\ssets^C$ together with a morphism of operads $\mathcal{O}\to {\rm End}(X)$, where ${\rm End}(X)$ denote the $C$-coloured operad of endomorphisms of $X$.

The category $\Alg({\mathcal{O}})$ admits a transferred model structure via the free-forgetful adjunction, which is called the \emph{projective model structure}; see \cite[Theorem~2.1]{BM07}. The weak equivalences and the fibrations are defined colourwise, that is, they are the morphisms $X\to Y$ such that $X(c)\to Y(c)$ is a weak equivalence or a fibration of simplicial sets for every $c\in C$, respectively. Moreover, for a morphism of operads $f\colon \mathcal{O}\to\mathcal{P}$ the
induced adjunction
        \[
        f_!:\Alg(\ope{O})\myrightleftarrows{\rule{0.4cm}{0cm}} \Alg(\ope{P}): f^*
        \]
is a Quillen pair with respect to the corresponding projective model structures.

Recall that a symmetric $C$-coloured operad $\ope{O}$ is called \emph{$\Sigma$-cofibrant} if it has an underlying cofibrant $C$-coloured collection, that is, if for every~$(\str{c}, c)$ in $\Sign{C}$ the simplicial set $\ope{O}(\str{c};c)$ is cofibrant in $\ssets^{\Sym_C(\str{c},\str{c})}$ endowed with the projective model structure.

The following characterization of Morita weak equivalences of simplicial operads in terms of their categories of algebras is proved in \cite[Theorem 4.8]{CG18}, by means of simplicial algebraic theories.

\begin{theo}\label{theo:main morita oper}
Let  $\morp{f}{\ope{O}}{\ope{P}}$ be a map between $\Sigma$-cofibrant simplicial coloured operads. The following are equivalent:
    \begin{enumerate}
        \item[{\rm (i)}] The map $f$ is a Morita weak equivalence of simplicial operads.
        \item[{\rm (ii)}] The Quillen pair
        \[
        f_!:\Alg(\ope{O})\myrightleftarrows{\rule{0.4cm}{0cm}} \Alg(\ope{P}): f^*
        \]
        is a Quillen equivalence.
    \end{enumerate}
\end{theo}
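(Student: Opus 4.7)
The plan is to obtain the theorem by assembling three pieces already available in the paper: Propositions~\ref{propo:hom fully faith} and~\ref{prop:hom ess retract}, which compare the definitional ingredients of a Morita weak equivalence on the operadic and algebraic-theoretic sides, and Corollary~\ref{cor:char_Morita_at}, which passes from a Morita weak equivalence of simplicial algebraic theories to the corresponding Quillen equivalence of algebra categories. The $\Sigma$-cofibrancy assumption is used in exactly one place, namely to invoke Proposition~\ref{propo:hom fully faith}.

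To establish (i) $\Leftrightarrow$ (ii), I would unravel the definitions: both ``Morita weak equivalence of simplicial operads'' and ``Morita weak equivalence of simplicial algebraic theories'' are the conjunction of ``homotopically fully faithful'' and ``homotopically essentially surjective up to retracts.'' Proposition~\ref{propo:hom fully faith} gives, under the $\Sigma$-cofibrancy hypothesis, that $f$ is homotopically fully faithful iff $\oat{f}$ is, and Proposition~\ref{prop:hom ess retract} gives (without any cofibrancy assumption) the analogous equivalence for essential surjectivity up to retracts. Conjoining the two equivalences proves (i) $\Leftrightarrow$ (ii).

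For (ii) $\Leftrightarrow$ (iii), I would apply Corollary~\ref{cor:char_Morita_at} directly to the morphism $\oat{f}\colon\oat{\ope{O}}\to\oat{\ope{P}}$ of simplicial algebraic theories: this yields that $\oat{f}$ is a Morita weak equivalence iff the Quillen pair
\[
\oat{f}_!\colon\sAlg{\oat{\ope{O}}}\myrightleftarrows{\rule{0.4cm}{0cm}} \sAlg{\oat{\ope{P}}}\colon \oat{f}^*
\]
is a Quillen equivalence, for the projective model structures on both sides. To transport this to the operad side, I would use the equivalence of categories $\sAlg{\oat{\ope{O}}}\simeq \Alg(\ope{O})$ from Section~\ref{sect:cart_oper_and_alg_th} (and likewise for $\ope{P}$). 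This equivalence is natural in the operad variable, so it intertwines the adjunctions induced by $f$ and by $\oat{f}$; moreover, in both cases the projective model structure is the one transferred from the colourwise (i.e. levelwise on objects) structure on the underlying functor category, so the equivalence transports fibrations and weak equivalences on the nose. Consequently $(\oat{f}_!,\oat{f}^*)$ is a Quillen equivalence iff $(f_!,f^*)$ is.

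I expect the main technical point to be the verification that the equivalence $\sAlg{\oat{\ope{O}}}\simeq \Alg(\ope{O})$ preserves the projective model structures and intertwines the adjoint pairs induced by $f$ and $\oat{f}$; but since fibrations and weak equivalences are detected pointwise on colours on both sides and the equivalence is compatible with evaluation at the objects of $C$, this is routine. Once this identification is in place, the theorem follows by splicing (i) $\Leftrightarrow$ (ii) with the translation of Corollary~\ref{cor:char_Morita_at} described above.
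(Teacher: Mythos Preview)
Your proposal is correct and follows essentially the same approach as the paper: the equivalence (i) $\Leftrightarrow$ (ii) via Propositions~\ref{propo:hom fully faith} and~\ref{prop:hom ess retract}, and (ii) $\Leftrightarrow$ (iii) via Corollary~\ref{cor:char_Morita_at} together with the equivalence $\Alg(\ope{O})\simeq\sAlg{\oat{\ope{O}}}$. You have in fact spelled out more carefully than the paper does the routine verification that this equivalence of algebra categories respects the projective model structures and intertwines the two change-of-base adjunctions.
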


\begin{rem}
    The hypothesis of $\Sigma$-cofibrancy in the previous theorem might seem restrictive and technical. However, it is helpful to keep in mind the following two facts: every cofibrant operad in the Cisinski--Moerdijk model structure is $\Sigma$\nobreakdash-cofibrant and the cofibrant resolution of every operad provides a model for the corresponding notion of homotopy invariant algebraic structure; see \cite{BM07}.

Hence, the above theorem can be  read as follows: a morphism of simplicial coloured operads is a Morita weak equivalence if and only if it induces a Quillen equivalence between the corresponding categories of homotopy invariant algebraic structures. Therefore, the Morita model structure on $\sOper$ provides a model for a homotopy theory of homotopy invariant algebraic structures.
\end{rem}

\section{The Morita model structure for dendroidal sets}

We begin this section by recalling some generalities about dendroidal sets. For complete details, we refer the reader to~\cite{MW07} and \cite{Moe10}. 

Let $\Omega$ denote the category of trees introduced by Moerdijk--Weiss in~\cite{MW07} as an extension of the simplicial category $\Delta$. The objects of $\Omega$ are (non-planar) rooted trees. Every tree $T$ in $\Omega$ has an associated symmetric coloured  operad $\Omega(T)$, whose set of colours is the set of edges of $T$ and whose operations are generated by the vertices of $T$. More explicitly, $\Omega(T)$ is the free symmetric coloured operad on the coloured collection determined by $T$. The set of morphisms between two trees $S$ and $T$ is defined as
$$
\Omega(S, T)=\Oper(\Omega(S), \Omega(T)).
$$
The category  $\dsets$  of \emph{dendroidal sets} is the category of presheaves on $\Omega$. Given a tree $T$, we denote by $\Omega[T]$ the representable presheaf $\Omega(-, T)$.

Similarly as in $\Delta$ every morphism in $\Omega$ can be factored in a unique way as a composition of faces, followed by an isomorphism and followed by a compostion of degeneracies. Given a face map $\alpha$ of $T$, the \emph{$\alpha$-horn} $\Lambda^{\alpha}[T]$ is the dendroidal subset of $\Omega[T]$ consisting of the union of the images of all faces of $\Omega[T]$ except $\alpha$ (if $\alpha$ is a face map that contracts an inner edge, $\Lambda^{\alpha}[T]$ is called an \emph{inner horn}).  The inclusion $\Lambda^{\alpha}[T]\to \Omega[T]$ is called a horn inclusion. 

A dendroidal set $X$ is called an \emph{$\infty$-operad} if it has the right lifting property with respect to all inner horn inclusions.

A monomorphism $f\colon X \to Y$ of dendroidal sets is called a \emph{normal monomorphism} if the action of the automorphism group $\Aut(T)$  on $Y_T\setminus f(X_T)$ is free. A dendroidal set $X$ is called \emph{normal} if the unique map $\emptyset\to X$ is a normal monomorphism.

There is a fully faithful inclusion $i\colon \Delta\to \Omega$, where $[n]$ is sent to the linear tree with $n$ vertices and $n+1$ edges, which induces an adjunction
\begin{equation}\label{eq:adj_dsets_ssets}
i_!:\ssets\myrightleftarrows{\rule{0.4cm}{0cm}} \dsets: i^*.
\end{equation}
There is a \emph{dendroidal nerve} functor $N_d$ from operads to dendroidal sets, defined as $N_d(\mathcal{O})_T=\Oper(\Omega(T),\mathcal{O})$. The functor $N_d$ is fully faithful, has a left adjoint and it extends the simplicial nerve of categories $N$, that is, the following diagrams commute up to natural isomorphism:
$$
\xymatrix{
\Oper \ar[r]^-{j^*} \ar[d]_-{N_d} & \Cat \ar[d]^-N \\
\dsets \ar[r]_-{i^*} & \ssets,
} \qquad
\xymatrix{
\Oper  \ar[d]_-{N_d} & \Cat \ar[l]_-{j_!} \ar[d]^-N \\
\dsets  & \ar[l]^-{i_!} \ssets.
}
$$

The category of dendroidal sets admits a model structure that generalizes the Joyal model structure on simplicial sets~\cite[Theorem 2.4, Proposition 2.6]{CM11}.

\begin{theo}[Cisinski--Moerdijk]
The category of dendroidal sets admits a left proper combinatorial model structure, called the operadic model structure, whose cofibrations are the normal monomorphisms, the fibrant objects are the $\infty$-operads, and whose weak equivalences are the operadic weak equivalences.
\end{theo}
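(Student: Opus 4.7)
The approach is to apply Cisinski's machinery for building model structures on presheaf categories, generalizing Joyal's treatment of $\ssets$. Since $\dsets$ is the category of presheaves on $\Omega$ and the inclusion $i \colon \Delta \to \Omega$ is fully faithful, one expects the argument to parallel the Joyal construction tree by tree.

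First, I would identify the cofibrations. These should be the \emph{normal monomorphisms}, generated by the boundary inclusions $\partial\Omega[T] \hookrightarrow \Omega[T]$, where $\partial\Omega[T]$ is the union of the images of proper face maps $\Omega[S] \to \Omega[T]$. One then checks that normal monomorphisms form a proper class of cofibrations in Cisinski's sense: closed under pushouts, transfinite composition and retracts, and admitting a cellular model. The normality condition, which controls the action of tree automorphisms, is what rules out pathological monomorphisms.

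Second, I would define inner horn inclusions $\Lambda^e[T] \hookrightarrow \Omega[T]$ for each inner edge $e$ of a tree $T$, and let the class of \emph{inner anodyne extensions} be the saturation of this set under pushouts, transfinite composition and retracts. Fibrant objects for this class are by definition $\infty$-operads. Invoking Cisinski's theorem on presheaf categories then produces a left proper combinatorial model structure with normal monomorphisms as cofibrations and inner anodyne extensions among the trivial cofibrations. Combinatoriality is automatic from the small object argument applied to the set of generating cofibrations and a suitable set of generating trivial cofibrations; left properness follows from the fact that cofibrations are monomorphisms in the presheaf topos $\dsets$.

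The main obstacle is identifying the weak equivalences produced by the construction as the \emph{operadic weak equivalences}, since the latter are naturally characterized on the operadic side (via homotopically fully faithful maps whose $\pi_0$ is essentially surjective). This would be done by constructing a Quillen equivalence $\St_d \colon \dsets \rightleftarrows \sOper \colon \hcN_d$ with the Cisinski--Moerdijk model structure on $\sOper$, so that operadic weak equivalences on both sides correspond; in particular, this forces the weak equivalences of the constructed model structure to be exactly the operadic ones. Verifying this Quillen equivalence is the technical heart of the proof and would occupy the bulk of the work.
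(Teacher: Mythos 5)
This theorem is not proved in the paper at all: it is quoted from Cisinski--Moerdijk \cite[Theorem 2.4, Proposition 2.6]{CM11}, so your proposal has to be measured against their construction rather than against anything in this text. Judged that way, your outline has the right ingredients at the start (normal monomorphisms generated by the boundary inclusions, inner horns $\Lambda^e[T]\to\Omega[T]$), but it contains several genuine gaps.

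First, Cisinski's machinery for presheaf categories cannot be invoked off the shelf, because it produces model structures in which \emph{all} monomorphisms are cofibrations; here only the normal ones are, and the whole construction has to be adapted (working systematically with normalizations, and checking the pushout-product axioms for the tensor product of dendroidal sets, which is where most of the combinatorial work in \cite{CM11} actually sits). Second, you omit the interval: the anodyne extensions must be generated not only by inner horns but also by maps built from $J$ (equivalently $i_!J$), since otherwise the fibrations between fibrant objects would not be isofibrations and the weak equivalences would come out wrong. Relatedly, ``fibrant objects for this class are by definition $\infty$-operads'' conflates a definition with a theorem: $\infty$-operads are defined by lifting against inner horns, and showing that every $\infty$-operad is fibrant in the constructed model structure (i.e., that equivalences in an $\infty$-operad can be lifted) is one of the main results being proved, the dendroidal analogue of Joyal's lifting theorem. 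Third, and most seriously, your route to identifying the weak equivalences is circular: the Quillen equivalence $\St_d\colon\dsets\rightleftarrows\sOper\colon\hcN_d$ is proved in the \emph{later} paper \cite{CM13} and presupposes that the operadic model structure on $\dsets$ already exists. In \cite{CM11} the operadic weak equivalences are defined intrinsically on the dendroidal side (via maps into $\infty$-operads and $J$-homotopy), and the comparison with simplicial operads is a subsequent theorem, not an input to the construction.
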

The adjoint pair~(\ref{eq:adj_dsets_ssets}) is a Quillen pair  between the Joyal model structure on simplicial sets and the operadic model structure on dendroidal sets.

The category of dendroidal sets is closely related to the category of simplicial operads, extending the relation of simplicial sets with simplicial categories via the homotopy coherent nerve. If $\mathcal{O}$ is a simplicial operad, we denote by $W(\mathcal{O})$ its Boardman--Vogt construction with respect to the interval $\Delta[1]$. The operad $W(\mathcal{O})$ is a cofibrant replacement of $\mathcal{O}$ and the $W(\mathcal{O})$-algebras are the $\mathcal{O}$-algebras up to homotopy; see~\cite[Section~4]{CM13} and \cite[Section 3]{BM07} for details. If we apply the $W$-construction to the operads $\Omega(T)$ viewed as discrete simplicial operads we get a functor $\Omega\to \sOper$ sending a tree $T$ to $W(\Omega(T)$ which, by left Kan extension, gives a pair of adjoint functors
\begin{equation}\label{eq:adj_hcN_d}
\St_d:\dsets\myrightleftarrows{\rule{0.4cm}{0cm}} \sOper: \hcN_d,
\end{equation}
extending adjunction $\eqref{eq:adj_hcN}$. The right adjoint is called the \emph{homotopy coherent dendroidal nerve} and, given a simplicial operad $\mathcal{O}$, it is defined by
$$
\hcN_d(\mathcal{O})_T=\sOper(W(\Omega(T)), \mathcal{O}).
$$
Cisinski and Moerdijk proved in~\cite[Theorem 8.15]{CM13} that adjunction \eqref{eq:adj_hcN_d} is a Quillen equivalence.

\begin{theo}[Cisinski--Moerdijk]
The adjunction $\St_d:\dsets\rightleftarrows\sOper:\hcN_d$ is a Quillen equivalence, where $\dsets$ is endowed with the operadic model structure and $\sOper$ is endowed with the Cisinski--Moerdijk model structure.
\end{theo}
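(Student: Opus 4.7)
The plan is to establish the Quillen adjunction first, then upgrade it to a Quillen equivalence by a representable reduction, leveraging the Joyal--Lurie equivalence $\St \dashv \hcN$ as a base case.

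For the Quillen adjunction, I would verify that $\St_d$ sends the generating (trivial) cofibrations of the operadic model structure to (trivial) cofibrations of $\sOper_{\CM}$. Since $\St_d$ is defined by left Kan extension along the dendroidal Yoneda embedding, we have $\St_d(\Omega[T]) \cong W(\Omega(T))$ by construction, so the computation reduces to an analysis of the Boardman--Vogt construction on the free tree operads $\Omega(T)$. For each generating normal monomorphism $\partial\Omega[T] \hookrightarrow \Omega[T]$, I would build a filtration by subtrees that expresses $\St_d(\partial\Omega[T]) \hookrightarrow W(\Omega(T))$ as an iterated pushout along generating cofibrations of $\sOper_{\CM}$; this is the tree-theoretic analogue of Lurie's proof that $\St \colon \ssets \to \sCat$ is left Quillen. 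The same filtration, applied to inner horn inclusions $\Lambda^e[T] \hookrightarrow \Omega[T]$ for inner edges $e$, shows that $\St_d$ sends these to trivial cofibrations.

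To upgrade to a Quillen equivalence, I would verify two conditions: (a) for every normal dendroidal set $X$, the derived unit $X \to \hcN_d(\St_d X)^{\mathrm{fib}}$ is an operadic weak equivalence, and (b) $\hcN_d$ reflects weak equivalences between fibrant simplicial operads. Because $\St_d$ commutes with colimits and $\hcN_d$ preserves filtered colimits and weak equivalences between fibrant objects, the class of $X$ satisfying (a) is closed under the cellular structure generating the normal dendroidal sets, so it suffices to check (a) on the representables $\Omega[T]$. Here the identification $N_d(\Omega(T)) \cong \Omega[T]$, combined with the fact that the augmentation $W(\Omega(T)) \to \Omega(T)$ is a local weak equivalence of simplicial operads whose target is discrete, shows that $\hcN_d W(\Omega(T))$ is an operadic fibrant replacement of $\Omega[T]$, and the unit is the canonical comparison. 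Condition (b) follows by exploiting the commutative square of adjunctions relating $(\St_d, \hcN_d)$ to $(\St, \hcN)$ via $i_!$ and $j_!$: mapping spaces in $\hcN_d(\mathcal{O})$ are computed by the simplicial nerve $\hcN$ applied to certain sub-simplicial-categories of $\mathcal{O}$, so detection of local weak equivalences reduces to the already-established Joyal--Lurie theorem, while essential surjectivity is immediate on colours.

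The principal obstacle is the combinatorial bookkeeping of the Boardman--Vogt construction: one needs a sufficiently explicit subtree filtration of $W(\Omega(T))$ to verify cofibrancy and the weak equivalence property of the augmentation simultaneously, and to carry the base case of (a) through. A secondary subtlety is the failure of right properness of the operadic model structure on $\dsets$, which means that standard replacement tricks when manipulating the derived unit and counit must be handled with care, typically by working only with normal dendroidal sets and local fibrations. These technical analyses constitute the bulk of the proof in~\cite{CM13}.
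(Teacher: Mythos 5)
The paper offers no proof of this statement: it is imported verbatim from Cisinski--Moerdijk and justified solely by the citation to \cite[Theorem 8.15]{CM13}, so there is no internal argument to compare against. Measured against the actual proof in \cite{CM13}, your outline takes a genuinely different route. Cisinski and Moerdijk do not run a cell induction on the derived unit; they deduce the equivalence from a chain of Quillen equivalences passing through dendroidal (complete Segal) spaces and Segal (pre-)operads, developed across their series of papers, and then identify the composite with $\St_d\dashv\hcN_d$. What you propose is the direct, Lurie-style argument: a filtration of $W(\Omega(T))$ showing $\St_d$ is left Quillen, a representable reduction for the derived unit (where, as you say, $\St_d\Omega[T]=W(\Omega(T))$ admits the discrete, hence fibrant, operad $\Omega(T)$ as a fibrant replacement and $\hcN_d(\Omega(T))\cong N_d(\Omega(T))\cong\Omega[T]$), and reflection of weak equivalences by $\hcN_d$. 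The direct route buys a self-contained proof at the cost of heavy Boardman--Vogt combinatorics; the Segal-space route modularizes that combinatorics but requires the entire comparison machinery.

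Two steps in your sketch need more than you give them. First, in your condition (b), the claim that detection of weak equivalences ``reduces to the already-established Joyal--Lurie theorem'' is only true in arity one. For $n\ge 2$ the space of $n$-ary operations of $\hcN_d(\mathcal{O})$ with a fixed profile is not the mapping space of any sub-simplicial-category of $\mathcal{O}$; one must separately compute the derived mapping space out of the $n$-corolla $\Omega[C_n]$ and identify it, for locally fibrant $\mathcal{O}$, with $\mathcal{O}(c_1,\dots,c_n;c)$. That identification is precisely the key computation with $W(\Omega(C_n))$ and is the heart of the theorem, not a corollary of the simplicial-category case. Second, the cell induction in (a) requires $\St_d$ to carry the pushouts building a normal $X$ to homotopy pushouts in $\sOper_{\CM}$; since $\sOper_{\CM}$ is not left proper (as this paper notes, citing \cite{HRY}), this is not automatic and must be extracted from the cube lemma, using that all the operads $\St_d$ produces from normal dendroidal sets are cofibrant. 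Your properness caveat points at the wrong side of the adjunction: the relevant obstruction is the failure of left properness of $\sOper_{\CM}$, not of right properness of $\dsets$.
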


We now describe the Morita model structure for dendroidal sets, which can be defined directly as a left Bousfield localization of the operadic model structure on dendroidal sets, since the latter is left proper and combinatorial.

\begin{defi}\label{def:morita_dsets}
The \emph{Morita model structure} for dendroidal sets $\dsets_{\Morita}$ is the left Bousfield localization of the operadic  model structure with respect to $i_! N(\iota)$ (or, equivalently, with respect to $N_dj_!(\iota)$).
\end{defi}

The Morita weak equivalences of dendroidal sets are characterized in Corollary~\ref{cor:char_moritaweq_dsets}. The fibrant objects of $\dsets_{\Morita}$, that is, the $i_!N(\iota)$-local objects, can be characterized as follows:

\begin{propo}
A dendroidal set is fibrant in $\dsets_{\Morita}$ if and only if it is an $\infty$-operad and has the right lifting property with respect to
$i_!N(\iota)$.
\end{propo}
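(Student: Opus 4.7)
The plan is to mirror the argument of Proposition~\ref{propo:char_local_sset_morita}, leveraging the Quillen adjunction $i_!\colon\ssets\rightleftarrows\dsets\colon i^*$ between the Joyal and operadic model structures in order to reduce the dendroidal statement to its simplicial analogue. Since $\dsets_{\Morita}$ is by definition the left Bousfield localization of the operadic model structure with respect to $i_!N(\iota)$, a dendroidal set $X$ is fibrant in $\dsets_{\Morita}$ precisely when $X$ is an $\infty$-operad and $X$ is $i_!N(\iota)$-local. It therefore suffices to show that, assuming $X$ is an $\infty$-operad, being $i_!N(\iota)$-local is equivalent to having the right lifting property with respect to $i_!N(\iota)$.

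For the forward direction, I would observe that $N(\Idem)$ and $N(\Split)$ are simplicial sets, hence cofibrant in the Joyal model structure, and that $i_!$ preserves cofibrations as a left Quillen functor. Consequently, $i_!N(\iota)$ is a cofibration between cofibrant objects in $\dsets$, and an application of \cite[Corollary~17.7.5(2)]{Hi03} to the $i_!N(\iota)$-local object $X$ yields the desired right lifting property.

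For the converse, suppose $X$ is an $\infty$-operad with the right lifting property with respect to $i_!N(\iota)$. Since $i^*$ is right Quillen, $i^*X$ is a quasicategory, and by the adjunction $i_!\dashv i^*$ it inherits the right lifting property with respect to $N(\iota)$. Proposition~\ref{propo:char_local_sset_morita} then implies that $i^*X$ is $N(\iota)$-local. The Quillen adjunction also produces natural weak equivalences of derived mapping spaces $\map^h_{\dsets}(i_!A, X)\simeq \map^h_{\ssets}(A, i^*X)$ for any cofibrant simplicial set $A$, which applied to $A = N(\Idem)$ and $A = N(\Split)$ transports the $N(\iota)$-locality of $i^*X$ back to the $i_!N(\iota)$-locality of $X$. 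The critical ingredient driving the converse is Proposition~\ref{propo:char_local_sset_morita}, which in turn rests on \cite[Corollary~4.4.5.14]{Lu09}; the main thing to verify in the translation is that the Quillen adjunction correctly exchanges derived mapping spaces on either side, which is standard.
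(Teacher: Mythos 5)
Your proposal is correct and follows essentially the same route as the paper: reduce to the quasicategorical case via the Quillen adjunction $i_!\dashv i^*$ and Proposition~\ref{propo:char_local_sset_morita}. The paper simply compresses your derived-mapping-space comparison into a single citation of \cite[Proposition 17.4.16]{Hi03}, which states exactly that for fibrant $X$, being $i_!N(\iota)$-local is equivalent to $i^*X$ being $N(\iota)$-local.
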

\begin{proof}
By \cite[Proposition 17.4.16]{Hi03} if $X$ is fibrant in the Cisinski--Moerdijk model structure, then $X$ is $i_!N(\iota)$-local if and only if $i^*X$ is $N(\iota)$-local. The proof now reduces to the case of quasicategories, which was treated in Proposition~\ref{propo:char_local_sset_morita}.
\end{proof}

The Quillen equivalence between dendroidal sets with the operadic model structure and simplicial operads with the Cisinski--Moerdijk model structure induces  a Quillen equivalence between the Morita model structures.
\begin{theo}\label{thm:Quillen_eq_dsets_morita}
The adjunction $\St_d:\dsets_{\Morita}\rightleftarrows\sOper_{\Morita}: \hcN_d$ is a Quillen equivalence.
\end{theo}
\begin{proof}
The model structure $\sOper_{\Morita}$ is the same as the localized model structure $L_{j_!(\iota)}\sOper$. Since $i_! N\simeq N_d j_!$, $\St_d i_!\simeq j_!\St$ and $\St N(\iota)\simeq \iota$, where $\simeq$ means weakly equivalent, we have that $\St_d N_dj_!(\iota)\simeq j_!(\iota)$; see Remark~\ref{rem:CN_retract}. Therefore, the localized model structure $L_{\St_d N_dj_!(\iota)}\sOper$ exists and is the same as $\sOper_{\Morita}$. If we consider the left Bousfield localization of the operadic model structure on dendroidal sets with respect to $N_dj_!(\iota)$, then by \cite[Theorem 3.3.20(1)(b)]{Hi03}, there is a Quillen equivalence $\St_d:\dsets_{\Morita}\rightleftarrows L_{\St_d N_dj_!(\iota)}\sOper: \hcN_d$ and the result follows.
\end{proof}

\begin{rem}
Let $\eta$ denote the representable dendroidal set corresponding to the tree with one edge and no vertices, and also the terminal simplicial category seen as an operad. The functors $\St_d$  and $\hcN_d$ preserve $\eta$. Thus, under the identifications $\ssets=\dsets/\eta$ and $\sCat=\sOper/\eta$, we can recover the Morita model structures on quasicategories (Definition~\ref{def:morita_qcat}) and simplicial categories (Theorem~\ref{theo:moritamodelcat}) as the standard model structure on the corresponding slice categories induced by the Morita model structures for dendroidal sets (Definition~\ref{def:morita_dsets}) and simplicial operads (Theorem~\ref{thm:Morita_simp_oper}), respectively.

By the same argument, the Quillen equivalence of Theorem~\ref{thm:Quillen_eq_ssets_morita} between the Morita model structure for quasicategories and the Morita model structure for simplicial categories can be recovered, by slicing over $\eta$, from the Quillen equivalence of Theorem~\ref{thm:Quillen_eq_dsets_morita}.
\end{rem}

The functor $\St_d$ preserves and reflects all weak equivalences between normal dendroidal sets. This allows us to give a characterization of the weak equivalences between normal dendroidal sets in the Morita model structure, similarly as we did in Section~\ref{sect:morita_qcat} for quasicategories, as we now explain.

Given a simplicial operad $\mathcal{O}$, we denote by $\Alg(\mathcal{O})$  the category of $\mathcal{O}$-algebras in simplicial sets equipped with the projective model structure. Generalizing the covariant model structure on simplicial sets described by Lurie, Heuts proved in \cite[Theorem 2.3]{Heu} that given a dendroidal set $X$ the slice category $\dsets/X$ admits the \emph{covariant model structure}, whose cofibrations are the normal monomorphisms and whose fibrant objects are the left fibrations $A\to X$. Moreover, given a dendroidal set $X$ there is an adjunction
$$
r_!:\dsets/X\myrightleftarrows{\rule{0.4cm}{0cm}} \Alg({\St_d(X))}:r^*
$$
given by the \emph{straightening} and \emph{unstraightening} functors; see \cite[Section 2.2]{Heu}.
The following result can be found in~\cite[Theorem 2.7]{Heu} (cf.~\cite[Corollary~ 6.5]{BoaM}).
\begin{theo}[Heuts, Boavida de Brito--Moerdijk]\label{thm:Heuts}
For any normal dendroidal set $X$ the adjunction $(r_!, r^*)$ is a Quillen equivalence between the slice category $\dsets/X$ with the covariant model structure, and the category $\Alg(\St_d(X))$ with the projective model structure.
\end{theo}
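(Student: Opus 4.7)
The plan is to establish Theorem~\ref{thm:Heuts} by adapting Lurie's proof of Theorem~\ref{thm:lurie_st-un} (the simplicial straightening--unstraightening equivalence) to the dendroidal setting. I would proceed in three steps: verifying the Quillen pair, reducing to representables, and handling the representable case.

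First, to verify that $(r_!,r^*)$ is a Quillen adjunction, I would show that $r_!$ preserves normal monomorphisms and sends a set of generating trivial cofibrations for the covariant model structure on $\dsets/X$ to trivial cofibrations of $\St_d(X)$-algebras. For cofibrations, one observes that $r_!$ of a representable dendroidal set over $X$ is (up to weak equivalence) a corepresentable algebra, and applying $r_!$ to $\partial\Omega[T]\to\Omega[T]$ yields a free algebra extension, which is a cofibration in the projective model structure on $\Alg(\St_d(X))$. For trivial cofibrations, the extra generators of the covariant model structure (beyond the inner horns of the operadic model structure) can be analyzed directly using the explicit form of the $W$-construction and the fact that straightening turns them into free extensions along acyclic maps of simplicial sets.

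Second, I would reduce the Quillen equivalence to the case $X=\Omega[T]$. A normal dendroidal set admits a skeletal filtration in which each attachment is a pushout of a boundary inclusion, and both sides of the adjunction behave well under such pushouts: the covariant model structure on $\dsets/X$ is left proper by Heuts' results, and the projective model structure on $\Alg(\St_d(X))$ is transferred and hence stable under cell attachments. A standard induction argument together with the fact that $\St_d$ commutes with colimits then reduces the weak-equivalence statement about the derived unit and counit to the representable case (and the case of boundaries, which is in turn a colimit over trees of smaller size).

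Third, for $X=\Omega[T]$, I would use that $\St_d(\Omega[T])=W(\Omega(T))$ is weakly equivalent in $\sOper$ to the discrete operad $\Omega(T)$. The adjunction then reduces, up to Quillen equivalence, to an adjunction between $\dsets/\Omega[T]$ with its covariant model structure and $\Alg(\Omega(T))$ with its projective model structure. Fibrant objects on the left are left fibrations over $\Omega[T]$, whose fibres, by a dendroidal Grothendieck-construction / Yoneda-type argument, assemble into precisely an $\Omega(T)$-algebra valued in Kan complexes. Checking that the derived unit and counit are weak equivalences can then be carried out by direct inspection on representables and free algebras.

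The main obstacle will be the representable case: verifying, via the combinatorics of the $W$-construction, that the covariant model structure on $\dsets/\Omega[T]$ is Quillen equivalent to the projective model structure on $W(\Omega(T))$-algebras. This requires a careful analysis of how the Boardman--Vogt coherence data (with their symmetric group actions and higher-arity operations, absent in the simplicial case) correspond, fibrewise, to left fibrations. A secondary obstacle is verifying the preservation properties of $r_!$ throughout the skeletal induction, since straightening interacts with boundary inclusions in a more subtle way than on the simplex category; this is where having $X$ normal is essential.
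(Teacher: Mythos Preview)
The paper does not prove Theorem~\ref{thm:Heuts}: it is stated with attribution to Heuts~\cite{Heu} and Boavida de Brito--Moerdijk~\cite{BoaM} and then used as a black box in Corollary~\ref{cor:char_moritaweq_dsets}. There is therefore no ``paper's own proof'' to compare your proposal against.

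Your outline is nonetheless broadly in the spirit of Heuts' original argument in~\cite{Heu}: establish the Quillen adjunction, reduce via a skeletal filtration of the normal dendroidal set $X$ to the case of representables $\Omega[T]$, and treat that case by hand. Two remarks on the sketch. First, the skeletal induction step requires more than left properness of the covariant model structure: you need that the derived unit/counit being an equivalence is stable under homotopy pushouts along boundary inclusions \emph{on both sides}, which in particular demands control over how $\St_d$ and the free-algebra functor interact with pushouts in $\sOper$, not merely in $\dsets$. Second, your treatment of the representable case is too informal: the phrase ``dendroidal Grothendieck-construction / Yoneda-type argument'' hides exactly the combinatorial work (involving the $W$-construction and the symmetric group actions you mention) that constitutes the heart of the proof in~\cite{Heu}. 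So the plan is reasonable as an outline of the cited proof, but several of the steps you describe as routine are in fact where the real content lies.
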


\begin{coro}\label{cor:char_moritaweq_dsets}
A map $f\colon X\to Y$ between normal dendroidal sets is a weak equivalence in the Morita model structure $\dsets_{\Morita}$ if and only if it induces a Quillen equivalence $f_!:\dsets/X\rightleftarrows \dsets/Y: f^*$ between the slice categories with the covariant model structures.
\end{coro}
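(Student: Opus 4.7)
The plan is to mirror the strategy used in the quasicategory case (Corollary~\ref{cor:char_moritaeq_ssets}), replacing Theorem~\ref{thm:DK}+Lemma~\ref{lem:Morita_r-equiv} by its operadic counterpart Theorem~\ref{theo:main morita oper}, and Lurie's straightening Theorem~\ref{thm:lurie_st-un} by Heuts's dendroidal analogue Theorem~\ref{thm:Heuts}. The proof breaks into three links, all chained together by a square of Quillen pairs.

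First, I would reduce the Morita weak equivalence question from dendroidal sets to simplicial operads. Since $X$ and $Y$ are normal, they are cofibrant in the operadic model structure, and $\St_d$ is left Quillen, so $\St_d(X)$ and $\St_d(Y)$ are cofibrant in $\sOper_{\CM}$. The Quillen equivalence $\St_d:\dsets_{\rm Morita}\rightleftarrows\sOper_{\rm Morita}:\hcN_d$ of Theorem~\ref{thm:Quillen_eq_dsets_morita}, combined with Ken Brown's lemma applied to $\St_d$ between cofibrant objects, shows that $f$ is a Morita weak equivalence of dendroidal sets if and only if $\St_d(f)$ is a Morita weak equivalence of simplicial operads.

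Second, I would apply Theorem~\ref{theo:main morita oper}. Since every cofibrant object of $\sOper_{\CM}$ is $\Sigma$-cofibrant (as recorded in the remark following Theorem~\ref{theo:main morita oper}), both $\St_d(X)$ and $\St_d(Y)$ are $\Sigma$-cofibrant, so $\St_d(f)$ being a Morita weak equivalence is equivalent to the Quillen pair
\[
\St_d(f)_!\colon \Alg(\St_d(X))\myrightleftarrows{\rule{0.4cm}{0cm}} \Alg(\St_d(Y))\colon \St_d(f)^*
\]
being a Quillen equivalence for the projective model structures on algebras.

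Third, I would invoke Heuts's straightening--unstraightening Quillen equivalence (Theorem~\ref{thm:Heuts}) for both $X$ and $Y$, and verify that the diagram of Quillen pairs
\[
\xymatrix{
\dsets/X \ar[d]_{f_!}\ar[r]^-{r_!} & \Alg(\St_d(X)) \ar[d]^{\St_d(f)_!} \\
\dsets/Y \ar[r]_-{r_!} & \Alg(\St_d(Y))
}
\]
commutes up to natural isomorphism. Since both horizontal arrows are Quillen equivalences, the left-hand vertical Quillen pair is a Quillen equivalence if and only if the right-hand one is. Combining this with the previous two steps yields the claimed characterization.

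The main obstacle I anticipate is the naturality/commutativity of the square with the straightening functor: one must check that $r_! f_!\cong \St_d(f)_! r_!$ for a morphism of normal dendroidal sets. This is the dendroidal analogue of the compatibility used in the proof of Corollary~\ref{cor:char_moritaeq_ssets} and should follow from the description of the straightening functor via change of base along $\St_d(f)$, but it is the only non-formal input beyond the cited theorems. Everything else is a formal concatenation of Quillen equivalences and the preservation-and-reflection of weak equivalences by a left Quillen functor between cofibrant objects.
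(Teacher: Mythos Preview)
Your proposal is correct and follows essentially the same route as the paper: reduce to $\St_d(f)$ via the Quillen equivalence of Theorem~\ref{thm:Quillen_eq_dsets_morita}, invoke Theorem~\ref{theo:main morita oper} (using that cofibrant simplicial operads are $\Sigma$-cofibrant), and close the loop with Heuts's straightening Theorem~\ref{thm:Heuts} via the commuting square of left adjoints. The paper is terser---it asserts the preservation-and-reflection of Morita weak equivalences by $\St_d$ on normal objects and the commutativity of the square without further comment---but the logical skeleton is identical; your added detail about $\Sigma$-cofibrancy and the Ken Brown/Quillen-equivalence justification for ``preserves and reflects'' only makes explicit what the paper leaves implicit.
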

\begin{proof}
A map $f$ between normal dendroidal sets is a weak equivalence in the Morita model structure for simplicial sets if and only if $\St_d(f)$ is a weak equivalence in the Morita model structure for simplicial operads. For every map of normal dendroidal sets $f\colon X\to Y$ the diagram
$$
\xymatrix{
\dsets/X \ar[d]_{f_!}\ar[r]^-{r_!} & \Alg({\St(X)}) \ar[d]^{\St_d(f)_!} \\
\dsets/Y \ar[r]_-{r_!} & \Alg({\St_d(Y)})
}
$$
commutes up to natural isomorphism. The result follows from Theorem~\ref{theo:main morita oper} and  Theorem~\ref{thm:Heuts}.
\end{proof}

\end{document}